\def\newaliasedtheorem#1[#2]#3{
  \newaliascnt{#1@alt}{#2}
  \newtheorem{#1}[#1@alt]{#3}
  \expandafter\newcommand\csname #1@altname\endcsname{#3}
}
\theoremstyle{plain}
\newtheorem{theorem}{Theorem}[section]
\theoremstyle{definition}
\theoremstyle{remark}
\numberwithin{equation}{section}
\def\R{\mathbb R}
\def\N{{\mathbb N}}
\def\ra{\rightarrow}
\title[Scattering for Dipolar BEC]{Asymptotic dynamic for Dipolar Quantum Gases below the ground state energy threshold}
\author[J. Bellazzini \and L. Forcella]{Jacopo Bellazzini \and Luigi Forcella}
\address{Jacopo Bellazzini \hfill\break  Universit\`a di Sassari, via Piandanna 4, 07100 Sassari, Italy}
\email{jbellazzini@uniss.it}
\address{Luigi Forcella\hfill\break  \'Ecole Polytechnique F\'ed\'erale de Lausanne, Institute of Mathematics, Station 8, CH-1015 Lausanne, Switzerland.}
\email{luigi.forcella@epfl.ch}
\subjclass[2000]{35Q55, 35B40, 82C10, 35J20}
\keywords{Gross-Pitaevskii equation, BEC, NLS-like equation, asymptotic behaviour, concentration/compactness method}
\begin{document}

\maketitle


\begin{abstract}
We consider the Gross-Pitaevskii equation describing a dipolar Bose-Einstein condensate without external confinement. We first consider the unstable regime, where the nonlocal nonlinearity is neither positive nor radially symmetric and standing states are known to exist. We prove that under the energy threshold given by the ground state, all global in time solutions behave as free waves asymptotically in time. The ingredients of the proof are variational characterization of the ground states energy, a suitable profile decomposition theorem and localized virial estimates, enabling to carry out a Concentration/Compactness and Rigidity scheme. As a byproduct we show that in the stable regime, where standing states do not exist, any initial data in the energy space scatters.
\end{abstract}

\section{Introduction}
The interest concerning the asymptotic dynamics of equations describing  a condensate of particles at very low temperatures speedily increased since the first experimental observation in $1995$  of Bose-Einstein condensate (BEC), see e.g. \cite{AEMWC,BrSaToHu}.
In the recent years, the so-called dipolar Bose-Einstein condensate, namely a condensate made out of particles possessing a permanent electric or magnetic dipole moment, see e.g. \cite{ BaCa,BaCaWa, NaPeSa,  SSZL}, has been attracting much attention. At temperatures much smaller than the critical one, such a model is well described by the wave function $u=u(t,x)$ whose evolution is governed by the Gross-Pitaevskii equation (GPE), 
\begin{equation}
\label{eq:evolution}
i h \frac{\partial u}{\partial t} = - \frac{h^2}{2m}\nabla^2 u + W(x) u + U_0|u|^2 u + (V_{dip}\ast |u|^2) u, \quad x \in \R^3, \quad t>0,
\end{equation}
where $t$ is the time variable, $x = (x_1,x_2,x_3)$ is the space coordinate, $h$ is the Planck constant, $m$ is the mass of a dipolar particle and $W(x)$ is an external, trapping, real potential. In this paper we consider the case when the trapping potential $W$ is not active, i.e. assuming $W(x)=0$. The coefficient $U_0 = 4 \pi h^2 a_s /m$ describes the local interaction between dipoles in the condensate with $a_s$ the $s$-wave scattering length (positive for repulsive interactions and negative for attractive interactions).

The long-range dipolar interaction potential between two dipoles is given by
\begin{equation*}
V_{dip}(x) = \frac{\mu_0 \mu^2_{dip}}{4 \pi} \,  \frac{1 - 3\cos^2 (\theta)}{|x|^3}, \quad x \in \R^3,
\end{equation*}
where $\mu_0$ is the vacuum magnetic permeability, $\mu_{dip}$ is the permanent magnetic dipole moment and $\theta$ is the angle between the dipole axis $n$ and the vector $x$.  For simplicity, we fix the dipole axis as the vector $n=(0,0,1)$.
The wave function is normalized according to
\begin{equation*}
\int_{\R^3} |u(x,t)|^2\,dx = N,
\end{equation*}
where $N$ is the total number of dipolar particles in the dipolar BEC.  \\

In order to simplify the mathematical analysis 
we rescale \eqref{eq:evolution} into the following dimensionless GPE,
\begin{equation}\label{GP}
\left\{ \begin{aligned}
i\partial_{t}u+\frac12\Delta u&=\lambda_1|u|^{2}u+\lambda_2(K\ast|u|^2)u, \quad (t,x)\in \R\times 
\mathbb{R}^3\\
u(0,x)&=u_0(x)\in H^1(\mathbb{R}^3)
\end{aligned}\right..
\end{equation}
The corresponding normalization  now reads
\begin{equation*}
N(u(\cdot, t)):= \|u(\cdot, t)\|^2_2 = \int_{\R^3}|u(x,t)|^2\,dx = \int_{\R^3}|u(x,0)|^2\,dx = 1,
\end{equation*}
and the kernel $K$ is given by 
\begin{equation*}
K(x)=\frac{x_1^2+x_2^2-2x_3^2}{|x|^5}.
\end{equation*}
The physical real parameters $\lambda_{1,2}$, which describe the strength of the two nonlinearities, are given by
\begin{equation*}
\lambda_1 = 4 \pi a_s N \gamma, \quad \lambda_2 = \frac{mN \mu_0 \mu_{dip}^2 }{4 \pi h^2}\gamma.
\end{equation*}

In this paper we consider the case when the two real parameters $\lambda_{1,2}$  range in the so called \emph{unstable regime}:
\begin{equation}\label{UR}
\left\{ \begin{aligned}
\lambda_1-\frac{4\pi}{3}\lambda_2<0 &\quad \hbox{ if } \quad \lambda_2>0\\
\lambda_1+\frac{8\pi}{3}\lambda_2<0 & \quad\hbox{ if } \quad\lambda_2<0
\end{aligned}\right..
\end{equation}
Solutions $u(t)\in\mathcal C((-T_{min},T_{max});H^1(\mathbb R^3))$ to \eqref{GP} have been proved to exist, at least locally in time, by Carles, Markovich and Sparber in  \cite{CMS}, and not only in the unstable regime but also in the complement region 
\begin{equation}\label{SReg}
\left\{ \begin{aligned}
\lambda_1-\frac{4\pi}{3}\lambda_2\geq 0 &\quad \hbox{ if } \quad \lambda_2>0\\
\lambda_1+\frac{8\pi}{3}\lambda_2\geq 0 & \quad\hbox{ if } \quad\lambda_2<0
\end{aligned}\right.,
\end{equation}
\noindent which is called the \emph{stable regime}.\\

We now recall that solutions to \eqref{GP} conserve mass and energy, namely 
\begin{equation}\label{eq:mass}
\mathcal M(t)=\mathcal M(u(t)):=\int_{\R^3}|u(t)|^2\,dx=\mathcal M(0)
\end{equation}
and
\begin{equation}\label{energy}
\mathcal E(t)=\mathcal E(u(t)):=\frac{1}{2}\left(\int_{\R^3}|\nabla u(t)|^2+\lambda_1|u(t)|^4+\lambda_2(K\ast|u(t)|^2)|u(t)|^2\,dx\right)=\mathcal E(0)
\end{equation} 
for any $t\in(-T_{min},T_{max}),$ where $T_{min},T_{max}\in(0,\infty]$ are the minimal and maximal time of existence, respectively.  \\

The unstable regime is of particular interest since stationary solutions are allowed in this region.  More precisely, stationary states  are solutions of the type
\begin{equation*}
u(x,t) = e^{-i \kappa  t}u(x), 
\end{equation*}
where $\kappa \in \R$ is the chemical potential, $u(x)$ is a time-independent function solving the stationary equation
\begin{equation}
\label{eq:maina}
- \frac{1}{2}\Delta u + \lambda_1 |u|^2 u + \lambda_2 (K \ast  |u|^2) u + \kappa u =0
\end{equation}
constrained on the manifold $S(1),$ where  
\begin{equation}\label{constraint1}
S(1) = \{ u \in H^1(\R^3) \ s.t. \ \|u\|_{L^2(\R^3)}^2 =1\}.
\end{equation}
There are two different approaches to show the existence of standing states.\\
The first one is due to Antonelli and Sparber, see \cite{AS}, where existence is proved  by means of the  Weinstein method, i.e. as
minimizers of the following 
scaling invariant functional
\begin{equation*}
J(v):=\frac{\|\nabla v\|^3_{L^2(\R^3)}\|v\|_{L^2(\R^3)}}{-\lambda_1\|v\|_{L^4(\R^3)}^4-\lambda_2\int_{\R^3} (K\ast |v|^2)|v|^2\,dx}.
\end{equation*}
The second strategy, due to the first author and Jeanjean, see \cite{BJ},  relies on  topological methods, more precisely  by proving  the existence
of critical points of the energy functional under the mass constraint depicted in \eqref{constraint1}. In this approach the parameter $\kappa$ is found as Lagrange multiplier. 
Despite the fact the energy is unbounded from below on $S(1)$, if one  restricts to states    that are stationary for the evolution equation, i.e. fulfilling  \eqref{eq:maina},  then the energy is bounded from below by a  positive constant; furthermore, this constant, corresponding to the mountain pass level, is reached. The mountain pass solutions hence correspond  to least energy states, also called ground states. As a direct consequence of this variational characterization and using a virial approach, the associated standing waves are proved to be orbitally unstable.

In \cite{BJ} is also proved that for sufficiently small initial data  in the $H^1(\mathbb R^3)$-norm, then (global) solutions to \eqref{GP} scatter (for the formal definition, see \autoref{def:scat}), no matter if the equation is considered in the unstable regime \eqref{UR} or not. 

Our aim is to study the long time behaviour of global solutions to \eqref{GP} subject to condition \eqref{UR} up to some threshold given in term of the ground state energy, by removing therefore the assumption on the smallness of the initial data. Our strategy follows the 
Kenig and Merle scheme,  developed in the well celebrated papers \cite{KM1, KM2} to solve the global existence and scattering problems for the energy critical, focusing, radial nonlinear Schr\"odinger and Wave Equations in low spatial dimensions, respectively.

Before stating our main result we recall the rigorous definition of scattering. In the following $U(t)f=e^{it\frac{\Delta}{2}}f$ will denote, with standard notation, the linear evolution driven by the free Schr\"odinger propagator of an initial datum $f,$ namely $L(t,x):=U(t)f$ satisfies $i\partial_tL+\frac12\Delta L=0,$ $L(0)=f.$ 
\begin{definition}\label{def:scat}
Let $u_0\in H^1(\mathbb{R}^3)$ be given and  $u(t, x)\in \mathcal {C}(\mathbb{R};H^1(\mathbb{R}^3))$
be the corresponding  unique global solution (if it exists) to \eqref{GP}. Then we say that 
$u(t,x)$  \emph{scatters} provided 
\begin{equation*}
\lim_{t\rightarrow \pm \infty} \|u(t,x) - e^{it\frac{\Delta}{2}} u^\pm \|_{H^1 (\mathbb{R}^3)}=0,
\end{equation*}
for suitable $u^\pm\in H^1(\mathbb{R}^3)$.
\end{definition}
We point out that is not guaranteed that solutions to \eqref{GP} do exist globally in time, so in the analysis below we shall also give sufficient conditions such that local in time solutions to \eqref{GP} (whose existence has been shown in \cite{CMS}) can be extended globally in time. 
Let us recall some notation introduced in \cite{BJ}: the quantity defined in \eqref{energy} can be rewritten as
\begin{equation*}
\mathcal E(t)=\frac{1}{2}\int_{\R^3}|\nabla u|^2\,dx+\frac{1}{2(2\pi)^3}\int_{\R^3}\left(\lambda_1+\lambda_2\hat K(\xi)\right)(\widehat{|u|^2} )^2(\xi)\,d\xi
\end{equation*}
by means of Plancherel identity, where  the Fourier transform of $K$ is given by
\begin{equation}\label{kernel:fou}
\hat K(\xi)=\frac{4\pi}{3}\frac{2\xi_3^2-\xi_2^2-\xi_1^2}{|\xi|^2}
\end{equation}
(see \cite{CMS} for a proof of the explicit form of $\hat K.$) A trivial computation  leads to 
\begin{equation*}
\hat K\in\left[-\frac43\pi,\frac83\pi\right].
\end{equation*} 
We split the energy as sum of the following kinetic and potential energies, respectively defined by 
\begin{align}\label{kinetic:en}
\mathcal T(u)&=\int_{\R^3}|\nabla u|^2\,dx\\ \label{potential:en}
\mathcal P(u)&=\frac{1}{(2\pi)^3}\int_{\R^3}\left(\lambda_1+\lambda_2\hat K(\xi)\right)(\widehat{|u|^2})^2(\xi)\,d\xi,
\end{align}
\noindent and we introduce the quantity (suggested by the Pohozaev identities)
\begin{equation}\label{GG}
\mathcal  G(u)=\mathcal T(u)+\frac32\mathcal P(u).
\end{equation}

Despite the fact that we are primarily interested in solutions satisfying \eqref{constraint1}, for the mathematical treatment of the problem it is convenient to consider the generic set of constraints
\begin{equation*}
S(c)=\left\{ u \in H^1(\R^3) \ s.t. \ \|u\|_{L^2(\R^3)}^2=c\right\}.
\end{equation*}
Here $c>0$ and the case $c=1$ trivially corresponds to the normalization \eqref{constraint1}. Given $c>0$, $\mathcal{E}(u)$ has a mountain pass geometry on $S(c)$ (see the monograph \cite{AM} for a detailed treatment of this topic). More precisely, there exists  $\beta>0$ such that 
\begin{equation*}
\gamma(c) := \inf_{g \in \Gamma(c)} \max_{t\in [0,1]}\mathcal{E}(g(t)) > \max \left\{\max_{g \in \Gamma(c)}\mathcal{E}(g(0)), \max_{g \in \Gamma(c)}\mathcal{E}(g(1))\right\}
\end{equation*}
holds in the set
\begin{equation*}
\Gamma(c) =\left\{ g \in C([0,1]; S(c))  \ s.t. \ g(0) \in A_{\beta},\mathcal{E}(g(1))<0 \right\},
\end{equation*}
where
\begin{equation*}
A_{\beta}= \left\{u \in S(c) \ s.t. \ \left \| \nabla u \right \|_{L^2(\R^3)}^2\leq \beta\right\}.
\end{equation*}

It it standard, see \cite{AM}, that the mountain pass geometry induces the existence of a Palais-Smale sequence at the level $\gamma(c),$ namely a sequence $\{u_n\}_{n\in\N} \subset S(c)$ such that, as $n\to\infty,$
\begin{equation*}
\mathcal{E}(u_n)=\gamma(c)+o(1),\qquad \|\mathcal{E}'|_{S(c)}(u_n)\|_{H^{-1}}=o(1).
\end{equation*}
If one can show in addition the compactness of $\{u_n\}_{n\in\N}$, namely that up to a subsequence,  $u_n \rightarrow u$ in $H^1(\R^3)$, then a critical point is found at the level $\gamma(c)$. This is exactly what happens under the assumptions \eqref{UR}.  We can summarize the last paragraph in the following.

\begin{theorem}\label{thm:standing}\cite[Theorem 1.1]{BJ}
Let $c>0$ and assume that \eqref{UR} holds. Then $\mathcal{E}(u)$ has a {\it mountain pass geometry} on $S(c)$ and there exists a couple $\{u_c, \kappa_c\}\in H^1(\R^3) \times \R^{+}$ solution of \eqref{eq:maina} with 
$\|u_c\|_{L^2(\R^3)}^2=c$ and $E(u_c)=\gamma(c).$ In addition $u_c \in S(c)$ is a ground state. 
\end{theorem}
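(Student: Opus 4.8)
The plan is to realize $u_c$ as a critical point of $\mathcal E$ restricted to $S(c)$, sitting at the mountain pass level $\gamma(c)$, via a constrained minimax argument followed by a concentration--compactness analysis of the resulting Palais--Smale sequence. The organizing device is the mass preserving dilation $u\mapsto u^t$, $u^t(x):=t^{3/2}u(tx)$ for $t>0$: since $\hat K$ in \eqref{kernel:fou} is $0$-homogeneous one has, with $\mathcal T,\mathcal P$ as in \eqref{kinetic:en}--\eqref{potential:en}, $\mathcal T(u^t)=t^2\mathcal T(u)$, $\mathcal P(u^t)=t^3\mathcal P(u)$, hence $\mathcal E(u^t)=\tfrac{t^2}{2}\mathcal T(u)+\tfrac{t^3}{2}\mathcal P(u)$ and $\tfrac{d}{dt}\mathcal E(u^t)\big|_{t=1}=\mathcal G(u)$ with $\mathcal G$ as in \eqref{GG}. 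Under \eqref{UR} the symbol $\lambda_1+\lambda_2\hat K$ is negative somewhere (its range is governed by $\hat K\in[-\tfrac43\pi,\tfrac83\pi]$), and exploiting the anisotropy one exhibits $u\in S(c)$ with $\mathcal P(u)<0$; combined with $\mathcal P(u)\ge-\|\lambda_1+\lambda_2\hat K\|_\infty\|u\|_{L^4}^4\gtrsim-c^{1/2}\mathcal T(u)^{3/2}$ (Plancherel and Gagliardo--Nirenberg) this gives a positive ``well'' ($\mathcal E(u)\ge\tfrac14\mathcal T(u)$ on $A_\beta$ for $\beta$ small), $\mathcal E(u^t)\to-\infty$ whenever $\mathcal P(u)<0$, and $\mathcal G>0$ on $A_\beta$. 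Hence the mountain pass geometry holds; using the elementary identity $\mathcal E=\tfrac16\mathcal T+\tfrac13\mathcal G$ (so $\mathcal E<0\Rightarrow\mathcal G<0$) together with the fact that $t\mapsto\mathcal E(u^t)$ is maximized at $t=1$ exactly when $\mathcal G(u)=0$, one gets the variational characterization
\begin{equation*}
\gamma(c)=\inf\{\mathcal E(u):u\in S(c),\ \mathcal G(u)=0\}=\tfrac16\inf\{\mathcal T(u):u\in S(c),\ \mathcal G(u)=0\}>0,
\end{equation*}
the inequality $\ge$ since each $g\in\Gamma(c)$ crosses $\{\mathcal G=0\}$ (as $\mathcal G(g(0))>0>\mathcal G(g(1))$), and $\le$ by running the dilation path through any admissible $u$.

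Next I would produce a Palais--Smale sequence carrying the extra Pohozaev information. By Jeanjean's monotonicity/augmented--functional trick (work on $\R\times S(c)$ with the functional $(s,v)\mapsto\mathcal E(v^{e^s})$, whose mountain pass level is again $\gamma(c)$) combined with a minimax principle on the $C^1$ manifold $S(c)$, there exists $\{u_n\}\subset S(c)$ with $\mathcal E(u_n)\to\gamma(c)$, $\mathcal G(u_n)\to0$ and $\|\mathcal E'|_{S(c)}(u_n)\|_{H^{-1}}\to0$. From $\mathcal E=\tfrac16\mathcal T+\tfrac13\mathcal G$ one gets $\mathcal T(u_n)\to6\gamma(c)$ and $\mathcal P(u_n)\to-4\gamma(c)$, so $\{u_n\}$ is bounded in $H^1(\R^3)$ and the associated Lagrange multipliers converge to $-\gamma(c)/c$.

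The core is compactness of $\{u_n\}$ modulo translations. Vanishing is excluded: if $\sup_y\int_{B(y,1)}|u_n|^2\to0$, then $u_n\to0$ in $L^4(\R^3)$ by Lions' lemma, hence $\mathcal P(u_n)\to0$ and $\mathcal G(u_n)-\mathcal T(u_n)\to0$, forcing $\mathcal T(u_n)\to0$, contradicting $\gamma(c)>0$. Thus, up to a subsequence and translations $y_n$, $u_n(\cdot+y_n)\rightharpoonup u\not\equiv0$; passing to the limit in the Euler--Lagrange equation (boundedness of the multipliers, local Rellich compactness for the cubic term, convergence of $K\ast|u_n(\cdot+y_n)|^2$) shows $u$ solves \eqref{eq:maina} for some $\kappa$, hence $\mathcal G(u)=0$. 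Set $w_n:=u_n(\cdot+y_n)-u\rightharpoonup0$ and $c':=\|u\|_{L^2}^2\in(0,c]$. The splittings $\mathcal T(u_n(\cdot+y_n))=\mathcal T(u)+\mathcal T(w_n)+o(1)$ and the Brezis--Lieb splitting $\mathcal P(u_n(\cdot+y_n))=\mathcal P(u)+\mathcal P(w_n)+o(1)$ for the nonlocal quartic form give $\mathcal G(w_n)\to0$, hence $\mathcal E(w_n)=\tfrac16\mathcal T(w_n)+o(1)\ge o(1)$, so $\mathcal E(u)\le\liminf_n\mathcal E(u_n)=\gamma(c)$. On the other hand $u\in\{v\in S(c'):\mathcal G(v)=0\}$, so $\mathcal E(u)\ge\gamma(c')$, and $c\mapsto\gamma(c)$ is strictly decreasing (rescaling $v\in S(c)$ with $\mathcal G(v)=0$ to $(\theta v)^{1/\theta^2}\in S(\theta^2c)$ yields a competitor with energy $\theta^{-2}\mathcal E(v)$, so $\gamma(\theta^2c)\le\theta^{-2}\gamma(c)<\gamma(c)$ for $\theta>1$). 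Hence $c'=c$ and $\mathcal E(u)=\gamma(c)$, which via the splittings forces $\mathcal T(w_n)\to0$ and $\|w_n\|_{L^2}\to0$, i.e.\ $u_n(\cdot+y_n)\to u$ in $H^1(\R^3)$. Then $u_c:=u\in S(c)$ achieves $\mathcal E(u_c)=\gamma(c)$ and solves \eqref{eq:maina}; pairing \eqref{eq:maina} with $u_c$ gives $\tfrac12\mathcal T(u_c)+\mathcal P(u_c)+\kappa_c c=0$, which with $\mathcal G(u_c)=0$ yields $\kappa_c=\mathcal T(u_c)/(6c)>0$. Finally, every solution of \eqref{eq:maina} lying on $S(c)$ satisfies $\mathcal G=0$, so the characterization of $\gamma(c)$ above shows $\mathcal E(u_c)=\gamma(c)$ is the least energy among such solutions, i.e.\ $u_c$ is a ground state.

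The main obstacle is this compactness step: ruling out dichotomy for a translation--invariant problem whose nonlocal nonlinearity is sign--indefinite. It is overcome by combining (i) the strict monotonicity of $\gamma(\cdot)$ with (ii) the Brezis--Lieb splitting of the nonlocal term and the additional constraint $\mathcal G(u_n)\to0$ supplied by the augmented minimax; once strong convergence is in hand, the identification of $\kappa_c$ and the ground state property are routine. The remaining technical points are the weak continuity and the Brezis--Lieb property of $|u|^2\mapsto K\ast|u|^2$, which follow from Plancherel, the boundedness of $\hat K$, and local compactness.
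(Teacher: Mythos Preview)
The paper does not prove this theorem: it is quoted verbatim as \cite[Theorem~1.1]{BJ} and used as an input to the scattering analysis, with no argument given here beyond the statement. So there is no ``paper's own proof'' to compare against; what you have sketched is, in effect, a reconstruction of the original proof in \cite{BJ}.

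That said, your outline is the correct one and matches the strategy of \cite{BJ}: mountain pass geometry via the mass-preserving dilation $u\mapsto u^t$, the identification $\gamma(c)=\inf\{\mathcal E(u):u\in S(c),\ \mathcal G(u)=0\}$ (which the present paper records as \eqref{def:vc}), Jeanjean's augmented-functional trick on $\R\times S(c)$ to obtain a Palais--Smale sequence with the extra information $\mathcal G(u_n)\to0$, and then concentration--compactness. Your exclusion of vanishing and the dichotomy-killing argument via strict monotonicity of $c\mapsto\gamma(c)$ (the scaling $v\mapsto(\theta v)^{1/\theta^2}$ preserves $\mathcal G=0$ and rescales $\mathcal E$ by $\theta^{-2}$) are exactly the steps used in \cite{BJ}. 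The computation $\kappa_c=\mathcal T(u_c)/(6c)=\gamma(c)/c>0$ is also right; note that your earlier remark that the Lagrange multipliers converge to $-\gamma(c)/c$ is just a sign convention mismatch with how $\kappa$ enters \eqref{eq:maina}, not an error of substance.

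Two small points where your sketch is a bit quick: (i) the Brezis--Lieb splitting for the nonlocal quartic form $\mathcal P$ is not entirely standard because $\hat K$ changes sign, so you cannot simply invoke a.e.\ convergence and Fatou; one really uses Plancherel, boundedness of $\hat K$, and Brezis--Lieb in $L^2$ for $|u_n|^2$, which you mention at the very end but should flag where you use it; (ii) passing to the limit in the Euler--Lagrange equation requires knowing the multipliers $\kappa_n$ are bounded and converge, which in \cite{BJ} comes from testing the constrained equation against $u_n$ and using the a priori bounds you already derived. None of this is a genuine gap, but in a full write-up those two places deserve a line each.
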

We moreover recall that the energy level $\gamma(c)$  has the following variational characterization that will be crucial to establish the scattering result below:
\begin{equation}\label{def:vc}
\gamma(c)=\inf\{\mathcal E(u) \ s.t. \ u\in V(u)\} \hbox{\quad where \quad} V(c)=\{u\in H^1(\R^3) \ s.t. \ \|u\|_{L^2(\R^3)}^2=c  \hbox{ and } \mathcal G(u)=0\}.
\end{equation} 
In \cite{BJ}, among the other results, the following sufficient conditions are given in order to have global existence of solution to \eqref{GP}.
\begin{theorem}\label{thm:1}{\cite[Theorem 1.3]{BJ}}
If $\mathcal E(u_0)<\gamma(c),$ with $c=\|u_0\|_{L^2(\R^3)}^2$ and $\mathcal G(u_0)>0,$ then the (local in time) solution $u\in\mathcal C((-T_{min},T_{max});H^1(\R^3))$ to \eqref{GP} can be extended globally in time, i.e. $T_{min}=T_{max}=\infty,$ and 
\begin{equation*}\mathcal G(u(t))>0
\end{equation*} 
for all $t\in\R$.
\end{theorem}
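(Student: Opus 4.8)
The plan is to run the standard ``below the ground state energy threshold'' invariant set argument. I would introduce the set
\begin{equation*}
\mathcal A=\left\{u\in H^1(\R^3)\ :\ \|u\|_{L^2(\R^3)}^2=c,\ \mathcal E(u)<\gamma(c),\ \mathcal G(u)>0\right\},
\end{equation*}
which contains $u_0$ by hypothesis, and show both that the flow of \eqref{GP} leaves $\mathcal A$ invariant and that membership in $\mathcal A$ yields a uniform-in-time control on $\|\nabla u(t)\|_{L^2(\R^3)}$. The algebraic input is the identity obtained by eliminating $\mathcal P$ from \eqref{energy} and \eqref{GG}: since $\mathcal E(u)=\tfrac12\mathcal T(u)+\tfrac12\mathcal P(u)$ and $\mathcal G(u)=\mathcal T(u)+\tfrac32\mathcal P(u)$, one has
\begin{equation*}
\mathcal T(u)=6\,\mathcal E(u)-2\,\mathcal G(u),\qquad\text{equivalently}\qquad \mathcal E(u)=\tfrac16\mathcal T(u)+\tfrac13\mathcal G(u),
\end{equation*}
so on the region $\{\mathcal G>0\}$ one gets $\|\nabla u\|_{L^2(\R^3)}^2=\mathcal T(u)<6\,\mathcal E(u)$ (and, using $\mathcal G(u_0)>0$, also $\mathcal E(u_0)>0$).

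The heart of the proof is the preservation of the sign of $\mathcal G$ along the evolution. Let $u\in\mathcal C((-T_{min},T_{max});H^1(\R^3))$ denote the maximal solution with datum $u_0$. By \eqref{eq:mass} and \eqref{energy}, $\|u(t)\|_{L^2(\R^3)}^2\equiv c$ and $\mathcal E(u(t))\equiv\mathcal E(u_0)<\gamma(c)$ throughout the existence interval. I would first observe that $t\mapsto\mathcal G(u(t))$ is continuous: $\mathcal T$ is continuous on $H^1(\R^3)$, and so is $\mathcal P$, since $\hat K\in L^\infty$ (indeed $\hat K\in[-\tfrac43\pi,\tfrac83\pi]$) and, by Plancherel, $|\mathcal P(u)|\le C\|u\|_{L^4(\R^3)}^4\le C'\|u\|_{H^1(\R^3)}^4$ via the Sobolev embedding $H^1(\R^3)\hookrightarrow L^4(\R^3)$. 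Suppose, for contradiction, that $\mathcal G(u(\cdot))$ is not positive on the whole interval; since $\mathcal G(u_0)>0$, the intermediate value theorem produces a time $t_0$ with $\mathcal G(u(t_0))=0$. Then $u(t_0)\in V(c)$ (mass conservation gives $\|u(t_0)\|_{L^2(\R^3)}^2=c$, and $u(t_0)\ne0$ because $c>0$), so the variational characterization \eqref{def:vc} forces $\mathcal E(u(t_0))\ge\gamma(c)$, contradicting $\mathcal E(u(t_0))=\mathcal E(u_0)<\gamma(c)$. Hence $\mathcal G(u(t))>0$ for every $t\in(-T_{min},T_{max})$. This is the only place where the hypothesis \eqref{UR} enters, via \autoref{thm:standing} and the characterization \eqref{def:vc}.

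Combining the two steps, for every $t$ in the maximal interval,
\begin{equation*}
\|\nabla u(t)\|_{L^2(\R^3)}^2=6\,\mathcal E(u_0)-2\,\mathcal G(u(t))<6\,\mathcal E(u_0),
\end{equation*}
which together with $\|u(t)\|_{L^2(\R^3)}^2=c$ gives the uniform bound $\sup_{t\in(-T_{min},T_{max})}\|u(t)\|_{H^1(\R^3)}^2\le c+6\,\mathcal E(u_0)<\infty$. The local Cauchy theory of \cite{CMS} comes with the usual blow-up alternative (if $T_{max}<\infty$ then $\|u(t)\|_{H^1(\R^3)}\to\infty$ as $t\uparrow T_{max}$, and symmetrically as $t\downarrow-T_{min}$), so this uniform $H^1$ bound forces $T_{min}=T_{max}=\infty$; the positivity $\mathcal G(u(t))>0$ for all $t\in\R$ has already been established. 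The main point requiring care is making the sign-preservation argument of the second paragraph airtight --- e.g.\ by taking the largest subinterval of $(-T_{min},T_{max})$ containing $0$ on which $\mathcal G(u(\cdot))>0$ and checking it is both open and closed in $(-T_{min},T_{max})$ --- together with the routine $H^1$-continuity of $\mathcal P$; no analytic ingredient beyond the conservation laws and \eqref{def:vc} is needed.
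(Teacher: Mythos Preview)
The paper does not prove this statement; it is quoted verbatim from \cite[Theorem~1.3]{BJ} and used as a black box. Your argument is correct and is precisely the standard one that underlies the result in \cite{BJ}: conservation of mass and energy keep the orbit on $S(c)$ with $\mathcal E(u(t))\equiv\mathcal E(u_0)<\gamma(c)$; the $H^1$-continuity of $\mathcal G$ together with the variational characterization \eqref{def:vc} forbids $\mathcal G(u(t))$ from vanishing (otherwise $u(t_0)\in V(c)$ would force $\mathcal E(u(t_0))\ge\gamma(c)$); and the identity $\mathcal T=6\mathcal E-2\mathcal G$ then yields the uniform gradient bound that rules out finite-time blow-up via the standard alternative from \cite{CMS}. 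There is nothing to correct or compare --- this is the expected proof.
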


Our aim in this paper is to show that something more can actually be said about the solution to \eqref{GP} under the conditions of \autoref{thm:1}. In fact, in that region all solutions scatter. The main theorem of the paper is then as follows.

\begin{theorem}\label{main} 
For any initial datum $u_0\in H^1(\mathbb R^3)$ satisfying $\mathcal E(u_0)<\gamma(c),$ with $c=\|u_0\|_{L^2(\R^3)}^2$ and $\mathcal G(u_0)>0,$ then the corresponding global solution to \eqref{GP} scatters.
\end{theorem}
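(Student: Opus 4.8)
The plan is to run the Kenig--Merle concentration/compactness and rigidity program for \eqref{GP} in the sub-threshold region $\mathcal E(u_0)<\gamma(c)$, $\mathcal G(u_0)>0$. The first ingredient is a \emph{small-data / local theory with a good stability (perturbation) lemma}: the Duhamel formulation together with Strichartz estimates (noting that $K\ast|u|^2$ defines a Calder\'on--Zygmund operator, so $\|K\ast|u|^2\|_{L^p}\lesssim\||u|^2\|_{L^p}$ and the dipolar term is handled just like the cubic term) yields local well-posedness in $H^1(\R^3)$, a small-data scattering criterion, and a long-time perturbation lemma. Combined with \autoref{thm:1} this already gives global existence with uniformly bounded $\mathcal G(u(t))>0$; one then records the coercivity fact that on the set $\{\mathcal E<\gamma(c),\ \mathcal G>0\}$ the energy and the quantity $\mathcal G$ control $\|\nabla u\|_{L^2}^2$ from above and below (this is the variational input: by \eqref{def:vc}, $\mathcal G=0$ forces $\mathcal E\geq\gamma$, so staying strictly below the threshold with $\mathcal G>0$ keeps $u(t)$ in a bounded, ``energy-trapped'' region of $H^1$, and moreover $\mathcal G(u(t))\gtrsim \min(1,\gamma(c)-\mathcal E(u_0))$).

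Next I would set up the \emph{contradiction via a critical element}. Define, for $\delta>0$ below the threshold, the statement $SC(\delta)$: every $u_0$ with $\mathcal E(u_0)<\gamma(c)-\delta$ hmm, more precisely with $\mathcal E(u_0)\le\gamma(c)$, $\mathcal E(u_0)\cdot(\text{something})<\delta$ and $\mathcal G(u_0)>0$ gives a scattering solution. If the theorem fails there is a critical threshold $\mathcal E_c<\gamma(c)$ at which scattering first fails. The \emph{profile decomposition} alluded to in the abstract is the key tool: for a bounded sequence in $H^1(\R^3)$ one extracts linear profiles $e^{it_n^j\Delta/2}\phi^j$ modulo translations (no scaling parameter is needed since the problem is $H^1$-critical only in the mass sense, i.e.\ it is energy-subcritical), with Pythagorean decompositions of mass, kinetic energy, and — crucially — of the potential energy $\mathcal P$ and hence of $\mathcal E$ and $\mathcal G$ (the nonlocal term decouples along profiles because $K\ast$ is translation invariant and the $\widehat{|u|^2}$ modes separate). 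Feeding a minimizing sequence of non-scattering data into this decomposition, using energy/$\mathcal G$ additivity together with the sub-threshold constraint, one shows there is a single profile carrying all the mass and energy; the perturbation lemma upgrades this to: there exists a global solution $u_c$ to \eqref{GP}, not scattering, with $\mathcal E(u_c)=\mathcal E_c<\gamma(c)$, $\mathcal G(u_c)>0$, and whose trajectory $\{u_c(t):t\in\R\}$ is precompact in $H^1(\R^3)$ modulo spatial translations $x(t)$.

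The final step is the \emph{rigidity argument}: such a compact, energy-trapped soliton-like solution must be zero. Here one uses the localized virial/Morawetz identity for $\mathcal G$: with a radial cutoff $\varphi_R$, $\frac{d^2}{dt^2}\int\varphi_R|u|^2\,dx = 2\mathcal G(u(t)) + (\text{error terms}\to0\text{ as }R\to\infty)$, and the coercivity $\mathcal G(u(t))\geq\eta>0$ forces the left-hand side to be bounded below by a positive constant for all time; integrating twice contradicts the boundedness of $\int\varphi_R|u|^2\,dx$ (which follows from compactness, after controlling the translation parameter $x(t)$ — one must show $|x(t)|=o(t)$, via a standard argument using conservation of mass/momentum and precompactness). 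The truncation error terms involving the nonlocal kernel are the delicate point: because $K$ is not integrable and only a principal-value/Fourier object, one cannot naively localize $(K\ast|u|^2)|u|^2$; the resolution is to work on the Fourier side, or to use that tails of $u$ are uniformly small by compactness so that the Calder\'on--Zygmund bound absorbs the error uniformly in $R$. I expect this control of the nonlocal virial error term — showing the localized $\mathcal G$ functional genuinely converges to $\mathcal G$ with a rate beating the cutoff scale — to be the main technical obstacle, together with establishing the profile-decomposition decoupling for the dipolar potential energy.
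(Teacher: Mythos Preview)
Your outline is correct and follows essentially the same Kenig--Merle scheme the paper carries out: small-data/perturbation theory (Lemmas~3.1--3.4), variational lower bound $\mathcal G(u(t))\geq\min\{\gamma(c)-\mathcal E(u_0),\mathcal E(u_0)\}$ (Proposition~2.5), profile decomposition with Pythagorean decoupling of the dipolar potential energy (Proposition~4.2), construction of the minimal element with $|x(t)|=o(t)$ (Section~5), and localized virial rigidity (Section~6).

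The one place where your sketch is too optimistic is the nonlocal virial error, and the paper's resolution is more delicate than either of the two mechanisms you suggest. Calder\'on--Zygmund $L^p$ bounds alone do \emph{not} absorb the error uniformly in $R$: the term $R\int\nabla\chi(x/R)\cdot\nabla(K\ast|u|^2)|u|^2\,dx$ carries an explicit factor of $R$, and a naive CZ estimate on $\nabla(K\ast|u|^2)$ leaves no decay in $R$ at all. The paper instead writes $K\ast f=-\tfrac{8\pi}{3}\mathcal R_3^2f+\tfrac{4\pi}{3}(\mathcal R_1^2+\mathcal R_2^2)f$ and imports from Lu--Wu the \emph{pointwise off-diagonal} bounds
\[
\Big|\eta\Big(\frac{x}{R}\Big)\,\mathcal R_j^2\big((1-\eta(\cdot/4R))f\big)(x)\Big|\lesssim R^{-3}\|f\|_{L^1},
\]
together with the symmetric estimate. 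One then performs a two-level near/far splitting $u=v_1+v_2$ and $v_1=w_1+w_2$, uses these pointwise bounds on the cross terms, and for the remaining ``diagonal'' piece passes to Fourier variables and exploits the commutator-type gain $\big|\tfrac{\xi\xi_j}{|\xi|}-\tfrac{\eta\eta_j}{|\eta|}\big|\lesssim|\xi-\eta|$ to trade a derivative for a factor of $|\widehat{\Delta\chi_R}|$, which is $L^1$-bounded uniformly in $R$. Only after all this does one recover $\frac{d^2}{dt^2}z_R\geq 4\mathcal G(u)-\varepsilon(R)$ with $\varepsilon(R)$ controlled by tails of $u$ outside balls of radius $\sim R$. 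Your instinct that compactness handles the tails is right, but the \emph{localization} of the dipolar term that produces those tail errors is the genuine work, and it needs the Riesz-transform/off-diagonal machinery rather than a direct CZ or Fourier-side argument.
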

\begin{remark}
It is worth mentioning that we do not assume neither finite variance nor spherical symmetry of the solutions. Indeed in the radial setting, the equation would reduce to a classical cubic NLS due to the fact that the nonlocal nonlinearity can be defined as a Calder\'on-Zigmund operator with kernel $|x|^{-3}\mathcal O(x)$ where $\mathcal O$ is a  zero-order function having zero average on the sphere (see \cite{CMS}).
\end{remark}
\begin{remark}
It shall be emphasised that the fact that $\lambda_1, \lambda_2$ are in the unstable regime \eqref{UR} does not imply that the potential energy $\mathcal{P}$ defined in \eqref{potential:en} is negative for any function in $H^1(\R^3)$, see \autoref{eq:contr} below. Despite the fact that when the potential energy $\mathcal{P}$ is positive the nonlinear term acts as a defocusing nonlinearity, we are not able to exclude 
that along the time evolution  $\mathcal{P}(u(x,t))$ changes sign. For this reason the conditions $\mathcal{G}(u_0)>0$ and $\mathcal{E}(u_0)<\gamma(c)$ are necessary even when $\mathcal{P}(u_0)>0$.
\end{remark}

The proof of \autoref{main} is based on the concentration/compactness and rigidity argument. \\
We recall briefly the general strategy (based on a contradiction argument) of the Kenig and Merle road map. As already recalled, in \cite{BJ} it has been proved that for sufficiently small $H^1(\R^3)$ initial data, solutions are global and scatter. Suppose now that the threshold for scattering is strictly below the claimed one. The tool called profile decomposition, based on concentration/compactness principles, proves the existence of a global but non-scattering solution (the so called \emph{minimal element} or \emph{soliton-like solution}, that we denote $u_{sl}$) at the threshold between scattering and non-scattering. Secondly, it is proved that  the flow of this minimal element is (up to some symmetries) a precompact subset of $H^1(\R^3)$ and that therefore it 
remains spatially localized uniformly in time along a continuous path $x(t)\in\R^3$. This uniform localization enables the use of a local virial identity to establish a strictly positive lower bound on the convexity (in
time) of the localized variance. More precisely,  once defines the localized variance  as $z_R(t)=R^2\int_{\R^3}\chi\left(\frac xR\right)|u_{sl}(t,x)|^2\,dx,$ where $\chi\in\mathcal C_c^\infty(\R^3)$ is a suitable cut-off function and $R>1$ is a  rescaling parameter. The goal is to connect the second derivative of this quantity with the function $\mathcal{G},$ introduced in \eqref{GG}, as follows
\begin{equation}\label{lb:GG1}
\frac{d^2}{dt^2}z_R(t)=4 \mathcal{G}(u_{sl}(t))+o(1),
\end{equation}
where the error term decays, uniformly in time, as $R$ increases. A lower positive bound on $\mathcal{G}(u_{sl}(t))$, following from the variational characterization of the mountain pass energy and from the properties of the minimal element, finally permits one to exhibit  a contradiction, since such a minimal element is forced to be the null function, while by construction it is non trivial.
Moreover we underline that for any initial datum $u_0\in H^1(\mathbb R^3)$ satisfying $\mathcal E(u_0)<\gamma(c),$ with $c=\|u_0\|_{L^2}^2$ and $\mathcal G(u_0)>0,$  using our variational approach we are able to derive an explicit lower bound on $\mathcal{G}(u(t))$ for  the corresponding global solution
given by 
\begin{equation}\label{lb:GG}
\mathcal{G}(u(t))\geq \min\{\gamma(c)-\mathcal{E}(u_0), \mathcal{E}(u_0)\}.
\end{equation}
\begin{remark}
From the identity $\mathcal{E}-\frac{1}{3}\mathcal{G}=\frac 16 \mathcal{T}$, it is evident that $\mathcal{E}>0$ if $\mathcal{G}>0.$ Our lower bound on $\mathcal{G}(u(t))$ follows 
only from the variational characterization of the mountain pass energy and not from the fact that this critical energy level is achieved by the ground state. Despite the fact that in  \autoref{prop:coinc} we show that our conditions for scattering coincide with the one in the Kenig and Merle approach, we never use the fact that the ground state exists. 
\end{remark}
The main difficulty concerning  \eqref{lb:GG1} is clearly the presence of the nonlocal dipolar term that makes the analysis more delicate with respect to local nonlinearities. In particular, for the dipolar interaction term, despite the nice identity
 $x\cdot \nabla K(x)=-3 K(x)$,  one cannot use the
brutal estimate $\left|x\cdot \nabla K(x)\right|=3|K(x)|$ due to the singularity of the
kernel at the origin. This argument is, as a matter of fact, the one that simplifies the computations for nonlocal
nonlinearities with positive kernel like for the Coulomb kernel of the form $K(x)=\frac{1}{|x|}$. In our case this rough estimate is not allowed.\\

We conclude this introduction by pointing out how in the stable regime \eqref{SReg}
the potential energy $\mathcal{P}$ is nonnegative  for any function in $H^1(\R^3)$. Indeed let us the consider $\lambda_2>0$, $\lambda_1-\frac{4\pi}{3}\pi\lambda_2>0,$ then we have 
\begin{equation*}
\mathcal{P}(u)=\frac{1}{(2\pi)^3}\int_{\R^3}\left(\lambda_1+\lambda_2\hat K(\xi)\right)(\widehat{|u|^2})^2(\xi)\,d\xi\geq \frac{1}{(2\pi)^3}\int_{\R^3}\left(\lambda_1-\frac{4\pi }{3}\lambda_2\right)(\widehat{|u|^2})^2(\xi)\,d\xi\geq 0.
\end{equation*}
From this fact it is clear that stationary states are not allowed in this regime due to the fact that $V(c)=\emptyset.$
Hence following verbatim the argument for the unstable regime without any other assumptions, we have  the following result (see  \cite[Section 7]{DHR} for an analogous remark about the cubic defocusing NLS).
\begin{corollary}
In the stable regime for any initial datum in $H^1(\R^3)$, the corresponding global solution to \eqref{GP} scatters.
\end{corollary}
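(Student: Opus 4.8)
The point is that in the stable regime \eqref{SReg} all the delicate sign conditions that had to be controlled dynamically in the unstable case become automatic, so the Concentration/Compactness and Rigidity argument built for \autoref{main} applies with essentially no modification. As noted just above, when \eqref{SReg} holds the Fourier multiplier $\lambda_1+\lambda_2\hat K(\xi)$ is nonnegative, hence $\mathcal P(u)\geq 0$ for every $u\in H^1(\R^3)$. Conservation of energy then gives $\tfrac12\mathcal T(u(t))\leq \mathcal E(u(t))=\mathcal E(u_0)$, so $\|\nabla u(t)\|_{L^2}$ stays bounded along the flow; combined with mass conservation and the blow-up alternative this shows that \emph{every} $H^1(\R^3)$ datum generates a global solution (which also re-proves \autoref{thm:1} in this regime, unconditionally). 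Moreover $\mathcal G(u)=\mathcal T(u)+\tfrac32\mathcal P(u)\geq \mathcal T(u)\geq 0$, with equality only for $u\equiv 0$. Since $V(c)=\emptyset$ (and, for that matter, $\Gamma(c)=\emptyset$ because $\mathcal E\geq 0$), the level $\gamma(c)$ in \eqref{def:vc} is an infimum over an empty set, i.e.\ $\gamma(c)=+\infty$; thus the hypotheses $\mathcal E(u_0)<\gamma(c)$ and $\mathcal G(u_0)>0$ of \autoref{main} hold for every nonzero datum, the case $u_0\equiv 0$ being trivial. Equivalently, the variational characterization \eqref{def:vc} and the conditional lower bound \eqref{lb:GG}, which are the only places where the sign hypotheses enter the proof of \autoref{main}, are here superseded by the stronger unconditional inequality $\mathcal G(u)\geq \mathcal T(u)$.

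With this in hand I would re-examine the three ingredients of the proof of \autoref{main}. The small-data scattering result of \cite{BJ} is valid in all regimes and needs nothing new. The profile decomposition theorem and the nonlinear-profile / perturbation analysis use the nonlocal term only through the facts that $\lambda_1+\lambda_2\hat K$ is a bounded real Fourier multiplier and that all the solutions appearing in the decomposition (the nonlinear profiles, the error terms) are global; the latter is now automatic by the previous paragraph, and no sign of $\mathcal P$ is ever invoked. Hence, running the Kenig--Merle contradiction argument exactly as for \autoref{main}, if scattering failed for some datum there would exist a global non-scattering minimal element $u_{sl}$ whose flow is precompact in $H^1(\R^3)$ modulo space translations $x(t)\in\R^3$, and which stays uniformly localized in space along $x(t)$. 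This construction is identical to the unstable-regime one.

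Finally I would close with the rigidity step, which is where the stable regime genuinely simplifies matters. The localized virial identity still yields, for $R>1$,
\begin{equation*}
\frac{d^2}{dt^2}z_R(t)=4\mathcal G(u_{sl}(t))+o_R(1)\geq 4\mathcal T(u_{sl}(t))+o_R(1),
\end{equation*}
with error uniform in $t$ and vanishing as $R\to\infty$; crucially, since $\mathcal P\geq 0$ one only needs a lower bound on $\mathcal G$, so the subtle cancellations forced by the singular kernel in the unstable case are not required here. Precompactness of the flow together with mass conservation gives $\inf_{t\in\R}\mathcal T(u_{sl}(t))=:\delta>0$: any sequence $u_{sl}(t_n)$ has an $H^1$-convergent subsequence whose limit has mass $c>0$, hence is nonzero, hence has strictly positive Dirichlet energy (a function in $H^1(\R^3)$ with vanishing gradient is $0$). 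Choosing $R$ large enough that the error is $\leq 2\delta$ for all $t$, one gets $\frac{d^2}{dt^2}z_R(t)\geq 2\delta>0$ on all of $\R$, while $z_R(t)$ is bounded uniformly in $t$ by the uniform spatial localization of $u_{sl}$ and mass conservation; integrating twice produces a contradiction, so no minimal element exists and every global solution scatters. I do not expect any real obstacle in this corollary: the only points requiring (minor) care are to confirm that the profile decomposition and the localized virial computation \eqref{lb:GG1} carried out in the unstable regime are indeed regime-independent — which they are, since the nonlocal interaction enters them only via the bounded multiplier $\lambda_1+\lambda_2\hat K$ and the Pohozaev quantity $\mathcal G$ — and to handle the convention $\gamma(c)=+\infty$ consistently; beyond that the stable regime is strictly easier than the situation of \autoref{main}.
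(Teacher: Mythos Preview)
Your proposal follows the paper's own approach, which is literally to run ``verbatim the argument for the unstable regime without any other assumptions'' once one observes $\mathcal P\geq 0$ and $V(c)=\emptyset$ (so $\gamma(c)=+\infty$). Two points need correction, however.

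First, your rigidity shortcut has a gap. You assert that the error $o_R(1)$ in the localized virial is uniform in $t$ for fixed $R$, and then contradict the boundedness of $z_R(t)$. But the error terms $\varepsilon_1(R),\varepsilon_2(R)$ are integrals over $\{|x|>r\}$ with $r\sim R$, while \autoref{prop:5.4} only localizes $u_{sl}$ around the \emph{moving} center $x(t)$; without knowing that $x(t)$ stays bounded you cannot make these errors small for all $t$ simultaneously. The paper closes this via the sub-linear growth $|x(t)|=o(t)$ of \eqref{sublin-decay}: on each finite interval $[\tau,\tau_1]$ one takes $R=\rho+\delta\tau_1$ and uses the first-order bound $|z_R'(t)|\lesssim R$, not boundedness of $z_R$ itself. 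That step is regime-independent and should be kept intact rather than replaced.

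Second, your remark that ``the subtle cancellations forced by the singular kernel \ldots are not required here'' is misleading. The delicate analysis of \autoref{lemma:6.2} --- splitting the nonlocal term $\mathcal B$ via Riesz transforms and the pointwise bounds \eqref{eq:pointwise1}--\eqref{eq:pointwise2} --- is exactly what produces the relation $\tfrac{d^2}{dt^2}z_R(t)\geq 4\mathcal G(u)-\varepsilon_1(R)-\varepsilon_2(R)$ that you invoke, and it is needed in either regime. What genuinely simplifies in the stable case is only the \emph{lower bound on $\mathcal G$}: instead of the variational \autoref{cor:lb} one has $\mathcal G\geq\mathcal T$ directly, and your precompactness argument giving $\inf_t\mathcal T(u_{sl}(t))>0$ is a valid substitute (the paper's route, reading \autoref{cor:lb} with $\gamma(c)=+\infty$ to get $\mathcal G(u)\geq\mathcal E(u_0)>0$, works equally well).
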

\subsection{Notations}\label{notations}
In what follows, we will use the notations below. \\

\noindent 
For $1\leq p\leq \infty,$ the $L^p=L^p(\mathbb R^n;\mathbb C)$ are the classical Lebesgue spaces, 
while $W^{1,r}=W^{1,r}(\mathbb R^3;\mathbb C)$ is defined as the space of function in $L^p$ with distributional derivatives in $L^p,$ with the usual norm $\|f\|_{W^{1,p}}^p=\|f\|^p_{L^p}+\|\nabla f\|^p_{L^p}.$  When $r=2,$ we set $W^{1,2}:=H^1=H^1(\mathbb R^3;\mathbb C).$
\\

\noindent Given an interval $I\subseteq \mathbb R,$ bounded or unbounded, we define by $L^p_IX=L^p(I;X)$ the Bochner space of vector-valued functions $f:I\mapsto X$ endowed with the norm $\|f\|_{L^p_IX}=\|f\|_{L^p_t(I;X)}=\left(\int_{I}\|f(s)\|_{X}^p\,ds\right)^{1/p}$ for $1\leq p<\infty,$ with similar modification as above for $p=\infty.$ In case $I=\R$ we simply write $L^pX.$  
\\

\noindent For a normed (Banach) space $(X,\|\cdot\|)$ we denote by $B_R(X)$ the open ball of radius $R$ with center at the origin, i.e. $B_R(X)= \{f \in X; \|f\| < R \};$ if $X=\R^3,$ then $B(x_0,R)$ is the ball of radius $R$ centered at $x_0.$  
\\

\noindent The operator $\mathcal Ff(\xi)=\hat f(\xi)$ is the standard Fourier Transform, $\mathcal F^{-1}$ being its inverse.
The operator $\tau_y$  denotes the translation operator $\tau_yf(x):=f(x-y),$ while $f\ast g$ is the convolution operator between $f$ and $g.$ $\Re{z}$ and $\Im{z}$ are the common notations for the real and imaginary parts of a complex number $z$. 
\\

\noindent Given a measurable set $\mathcal O\subseteq\R^d,$ $1_{\mathcal O}(x)$ is the indicator function of $\mathcal O.$\\

\noindent Given two quantities $A$ and $B,$ we denote $A \lesssim B$ ($A\gtrsim B,$ respectively) if there exists a positive constant $C$ such that $A \leq CB$ ($A\geq CB,$ respectively). If both the relations hold true, we write $A\sim B.$
\\

\noindent Finally,  for $1\leq p\leq \infty,$ we denote  its conjugate by $p^\prime:=\frac{p}{p-1},$ and since we work on $\R^3,$ we simply denote $\int=\int_{\R^3}.$

\section{Variational Estimates}\label{sec:var:est}

We shall notice that the fact that $\lambda_1, \lambda_2$ belong to the unstable regime does not guarantee that the potential energy
$\mathcal{P}(u)$ fulfils the condition $\mathcal{P}(u)<0$. As an example of this fact we show the following.
\begin{lemma}\label{eq:contr}
Let $\lambda_1, \lambda_2,$ belong to the unstable regime \eqref{UR} and without loss of generality  $\lambda_2>0.$ Assume moreover the additional restriction  $\lambda_1+\frac{8}{3}\pi \lambda_2>0.$ Then there exists $u \in H^1$  with $\|u\|_{L^2}^2=c$ such that 
\begin{equation*} 
\mathcal{P}(u)>0.
\end{equation*}
\end{lemma}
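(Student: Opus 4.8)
The plan is to reduce the statement, by scaling, to producing \emph{some} $v\in H^1(\R^3)$ with $\mathcal P(v)>0$: since $\mathcal P(tv)=t^4\mathcal P(v)$ for $t>0$, once such a $v$ is found the normalised function $u:=\sqrt{c}\,\|v\|_{L^2}^{-1}v$ lies in $S(c)$ and still satisfies $\mathcal P(u)>0$. To construct $v$, the idea is an anisotropic concentration which pushes the Fourier transform of $|v|^2$ towards the $\xi_3$-axis, i.e.\ towards $\{\xi_1=\xi_2=0\}$, where by \eqref{kernel:fou} the symbol $\hat K$ attains its maximum $\frac{8}{3}\pi$, so that---thanks precisely to the extra hypothesis $\lambda_1+\frac{8}{3}\pi\lambda_2>0$---the weight $\lambda_1+\lambda_2\hat K(\xi)$ appearing in \eqref{potential:en} is strictly positive there.

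Concretely, I would fix a nonzero $v\in\mathcal S(\R^3)$ (a Gaussian, say), set $w:=|v|^2\in\mathcal S(\R^3)$, and for $\eps\in(0,1]$ put $v_\eps(x_1,x_2,x_3):=v(\eps x_1,\eps x_2,x_3)\in H^1(\R^3)$, so that $|v_\eps(x)|^2=w(\eps x_1,\eps x_2,x_3)$ and hence $\widehat{|v_\eps|^2}(\xi)=\eps^{-2}\hat w(\xi_1/\eps,\xi_2/\eps,\xi_3)$. Inserting this into \eqref{potential:en} and substituting $\xi_j=\eps\eta_j$ for $j=1,2$ gives
\begin{equation*}
(2\pi)^3\eps^2\,\mathcal P(v_\eps)=\int_{\R^3}\Big(\lambda_1+\lambda_2\hat K(\eps\eta_1,\eps\eta_2,\xi_3)\Big)\,\big|\hat w(\eta_1,\eta_2,\xi_3)\big|^2\,d\eta_1\,d\eta_2\,d\xi_3 .
\end{equation*}
For every $(\eta_1,\eta_2,\xi_3)$ with $\xi_3\neq0$ one has $\hat K(\eps\eta_1,\eps\eta_2,\xi_3)\to\frac{8}{3}\pi$ as $\eps\to0^+$ by \eqref{kernel:fou}, hence a.e.\ convergence of the integrands; since $|\hat K|\le\frac{8}{3}\pi$, the integrands are dominated by $\big(|\lambda_1|+\frac{8}{3}\pi|\lambda_2|\big)|\hat w|^2\in L^1(\R^3)$, so dominated convergence yields
\begin{equation*}
(2\pi)^3\eps^2\,\mathcal P(v_\eps)\xrightarrow[\eps\to0^+]{}\Big(\lambda_1+\frac{8}{3}\pi\lambda_2\Big)\|\hat w\|_{L^2(\R^3)}^2>0 ,
\end{equation*}
the sign being forced by the extra hypothesis together with $\hat w\not\equiv0$ (as $v\not\equiv0$). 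Thus $\mathcal P(v_\eps)>0$ for all sufficiently small $\eps$, and rescaling $v_\eps$ onto $S(c)$ as above completes the argument.

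The only delicate point---and the main (admittedly modest) obstacle---is the discontinuity at the origin of the degree-zero symbol $\hat K$: along $\{\xi_3=0\}$ the argument $(\eps\eta_1,\eps\eta_2,0)$ tends to $0$ inside the plane $\{\xi_3=0\}$, where $\hat K\equiv-\frac{4}{3}\pi$, so the pointwise limit $\frac{8}{3}\pi$ fails there; one must simply note that $\{\xi_3=0\}$ is Lebesgue-null before invoking dominated convergence. Apart from this, the computation is routine, and I stress that nothing is used beyond the explicit formula \eqref{kernel:fou} and the elementary bound $\hat K\le\frac{8}{3}\pi$---no ground-state information, no boundedness of $\mathcal E$.
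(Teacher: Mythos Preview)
Your proof is correct and follows essentially the same route as the paper: an anisotropic rescaling that, via the homogeneity of $\hat K$, pushes the symbol toward its maximal value $\frac{8}{3}\pi$ on a set of full measure, followed by dominated convergence. The paper's scaling $u^\mu(x)=\mu\,u(\mu^{1/2}x_1,\mu^{1/2}x_2,\mu x_3)$ with $\mu\to\infty$ is built to preserve the $L^2$-norm directly, whereas you dilate only in $(x_1,x_2)$ and normalise afterward, but this is a cosmetic difference; your treatment of the null set $\{\xi_3=0\}$ is in fact slightly more careful than the paper's.
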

\begin{proof}
We will show the existence of a function $u \in H^1$  with $\|u\|_{L^2}^2=c$ such that $\mathcal{P}(u)>0$ by scaling argument. Let us consider
\begin{equation*}
u^\mu=\mu u(\mu^{1/2} x_1, \mu^{1/2} x_2, \mu x_3).
\end{equation*}
This transformation preserves the $L^2$-norm and is such that the potential energy rescales straightforwardly as
\begin{equation*}
\mathcal{P}(u^\mu)=c \mu^2\int\left(\lambda_1+\frac43 \pi \lambda_2 \frac{2\mu^2 \xi_3^2-\mu\xi_1^2-\mu\xi_2^2}{\mu\xi_1^2+\mu\xi_2^2+\mu^2 \xi_3^2}\right)(\widehat{|u|^2})^2\,d\xi,
\end{equation*}
where we used the scaling property of the Fourier transform for rescaled functions.
Now, with our assumptions, we have 
\begin{equation*}
\lim_{\mu \rightarrow \infty }\lambda_1+\frac43 \pi \lambda_2 \frac{2\mu^2 \xi_3^2-\mu\xi_1^2-\mu\xi_2^2}{\mu\xi_1^2+\mu\xi_2^2+\mu^2 \xi_3^2}=\lambda_1+\frac{8}{3}\pi \lambda_2>0,
\end{equation*}
which implies that 
$\lim_{\mu \rightarrow \infty}\mathcal{P}(u^\mu)=+\infty$ for all $u$ thanks to the Lebesgue theorem.
\end{proof}
It is important to notice that if the potential energy is negative it is possible to introduce a Weinstein-like functional analogous to the one arising from the Gagliardo-Nirenberg inequality,
whose maximizers correspond to the ground states.
In \cite{AS} is proved the following result ensuring existence of a minimizer for the functional below:
\begin{equation}\label{wein:fun}
J(v)=\frac{\|\nabla v\|_{L^2}^3\|v\|_{L^2}}{-\lambda_1\|v\|_{L^4}^4-\lambda_2\int (K\ast |v|^2)|v|^2\,dx}.
\end{equation}
More precisely the result is as follows.
\begin{prop}{\cite[Proposition 3.2]{AS}}
Under the hypothesis \eqref{UR} there exists a minimizer $v_m\in H^1$ to \eqref{wein:fun}, namely 
$\inf \{J(v) \ s.t.\ v\in H^1, v\neq 0 \hbox{ and } \mathcal{P}(v)<0\}$ is attained at $v_m,$ i.e. $J(v_m)=m:=\inf \{J(v)\ s.t.\   v\in H^1, v\neq 0 \hbox{ and } \mathcal{P}(v)<0 \}.$ Moreover, by the scaling invariance property of the functional $J,$ it can be assumed that $\|v_m\|_{L^2}=\|\nabla v_m\|_{L^2}=1.$ Furthermore  $v_m$ solves 
\begin{equation}\label{eq:min}
-3\Delta v_m+4m\left(\lambda_1|v_m|^2v_m+\lambda_2(K\ast|v_m|^2)v_m\right)+v_m=0.
\end{equation}
\end{prop}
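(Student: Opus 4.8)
The plan is to construct $v_m$ by the direct method of the calculus of variations, as a minimizer of the scaling invariant quotient $J$, the only genuine difficulty being the loss of compactness coming from the non-compact Sobolev embeddings on $\R^3$ and from the invariances of $J$. First I would record two elementary facts: by Plancherel's identity the denominator of $J$ in \eqref{wein:fun} equals $-\mathcal P(v)$, with $\mathcal P$ as in \eqref{potential:en}, and since $\lambda_1+\lambda_2\hat K$ is bounded (see \eqref{kernel:fou}) one has $-\mathcal P(v)\le C_0\|v\|_{L^4}^4$ with $C_0:=\|\lambda_1+\lambda_2\hat K\|_{L^\infty}$; combining this with the Gagliardo--Nirenberg inequality $\|v\|_{L^4}^4\lesssim\|\nabla v\|_{L^2}^3\|v\|_{L^2}$ yields $m>0$ and, equivalently, the sharp inequality
\begin{equation*}
-\mathcal P(v)\le \tfrac{1}{m}\,\|\nabla v\|_{L^2}^3\,\|v\|_{L^2},\qquad v\in H^1(\R^3),
\end{equation*}
so that existence of the minimizer amounts to proving that the constant $1/m$ is attained (the admissible set $\{v:\mathcal P(v)<0\}$ is non-empty under \eqref{UR}, by a scaling argument analogous to the one in the proof of \autoref{eq:contr}). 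Exploiting that $J$ is invariant both under the two-parameter rescaling $v\mapsto s\,v(\cdot/\lambda)$, $s,\lambda>0$, and under translations, I would then fix a minimizing sequence $(v_n)$ with $\|v_n\|_{L^2}=\|\nabla v_n\|_{L^2}=1$, so that $(v_n)$ is bounded in $H^1$ and $-\mathcal P(v_n)=1/J(v_n)\to1/m>0$.

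Next I would extract a nontrivial limit and upgrade to strong convergence. From $-\mathcal P(v_n)\le C_0\|v_n\|_{L^4}^4$ we get $\liminf_n\|v_n\|_{L^4}>0$, so $(v_n)$ does not vanish; by Lions' lemma there exist $y_n\in\R^3$ and $\delta>0$ with $\int_{B(y_n,1)}|v_n|^2\,dx\ge\delta$. After translating (which leaves $J$ unchanged) and passing to a subsequence, $\tilde v_n:=\tau_{y_n}v_n\rightharpoonup v$ in $H^1$, $\tilde v_n\to v$ in $L^2_{loc}$ and a.e., so $v\neq0$ and $a:=\|\nabla v\|_{L^2}^2$, $b:=\|v\|_{L^2}^2$ both lie in $(0,1]$ by weak lower semicontinuity. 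With $w_n:=\tilde v_n-v$, the Hilbert-space splittings $\|\nabla\tilde v_n\|_{L^2}^2=a+\|\nabla w_n\|_{L^2}^2+o(1)$ and $\|\tilde v_n\|_{L^2}^2=b+\|w_n\|_{L^2}^2+o(1)$ are automatic; combining them with the Brezis--Lieb type decoupling $\mathcal P(\tilde v_n)=\mathcal P(v)+\mathcal P(w_n)+o(1)$ and with the sharp inequality above applied to $v$ and to $w_n$ yields, writing $a_n:=\|\nabla w_n\|_{L^2}^2$, $b_n:=\|w_n\|_{L^2}^2$,
\begin{equation*}
a^{3/2}b^{1/2}+a_n^{3/2}b_n^{1/2}\ge 1+o(1),\qquad a+a_n\to1,\qquad b+b_n\to1.
\end{equation*}
Since $(x,y)\mapsto x^{3/2}y^{1/2}$ is the square of the concave and positively $1$-homogeneous function $(x,y)\mapsto x^{3/4}y^{1/4}$, it is strictly superadditive on $(0,\infty)^2$; because $a,b>0$ this forces $a=b=1$ and $(a_n,b_n)\to(0,0)$, i.e.\ $w_n\to0$ in $H^1$. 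Hence $\tilde v_n\to v$ strongly, $\|v\|_{L^2}=\|\nabla v\|_{L^2}=1$, $\mathcal P(v)=\lim_n\mathcal P(\tilde v_n)=-1/m<0$ and $J(v)=m$; this $v=:v_m$ is the desired minimizer, already in the stated normalization.

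The step I expect to be the main obstacle is the Brezis--Lieb identity for the \emph{nonlocal} term, which, unlike its local $L^4$ counterpart, cannot be obtained from pointwise convergence alone: expanding $|\tilde v_n|^2=|v|^2+|w_n|^2+2\Re(\bar v\,w_n)$ and inserting into $\mathcal P$ produces mixed terms, which I would show to vanish in the limit using that $f\mapsto K\ast f$ is a Calder\'on--Zygmund operator, hence bounded on $L^p(\R^3)$ for $1<p<\infty$, together with $w_n\rightharpoonup0$ in $H^1$, $w_n\to0$ a.e., and $\bar v\,w_n\to0$ in $L^{3/2}$ (the last by Vitali's theorem, since $\{|\bar v\,w_n|^{3/2}\}$ is uniformly integrable and tends to $0$ a.e.). Finally, for the Euler--Lagrange equation: since $\mathcal P(v_m)<0$, the minimizer lies in the interior of the open set $\{v:\mathcal P(v)<0\}$, so it is an unconstrained critical point of the $C^1$ functional $J=\mathcal T^{3/2}\mathcal N^{1/2}/(-\mathcal P)$, with $\mathcal T$ as in \eqref{kinetic:en} and $\mathcal N(v):=\|v\|_{L^2}^2$; computing $J'(v_m)=0$ and using $\mathcal T(v_m)=\mathcal N(v_m)=1$, $-\mathcal P(v_m)=1/m$ gives
\begin{equation*}
\Re\!\int\Big(-3\Delta v_m+v_m+4m\big(\lambda_1|v_m|^2v_m+\lambda_2(K\ast|v_m|^2)v_m\big)\Big)\bar\phi\,dx=0\qquad\text{for every }\phi\in H^1,
\end{equation*}
whence \eqref{eq:min} follows by testing with $\phi$ and with $i\phi$.
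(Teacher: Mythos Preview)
The paper does not give its own proof of this proposition: it is quoted verbatim from \cite[Proposition~3.2]{AS} and used as a black box. Your argument is therefore not competing with anything in the present paper, and it is essentially correct --- it is the standard Weinstein--Lions strategy, carried out cleanly (normalization by scaling, exclusion of vanishing via Lions' lemma, Brezis--Lieb splitting, and a strict superadditivity trick to rule out dichotomy).

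Two small remarks on the Brezis--Lieb step, which you rightly flag as the delicate point. First, the space $L^{3/2}$ you quote for $\bar v\,w_n\to0$ works, but $L^2$ is both true and more convenient here: $|\bar v w_n|^2\le|v|^2|w_n|^2$ is uniformly integrable and tight (control by $\|v\|_{L^4(E)}^2\|w_n\|_{L^4}^2$), so Vitali gives $\bar v w_n\to0$ in $L^2$, and then all the cross terms containing $c_n:=2\Re(\bar v w_n)$ vanish directly via Plancherel and the boundedness of $\hat K$. Second, among the mixed terms there is also $\int (K\ast|v|^2)\,|w_n|^2$, which involves no factor $c_n$; for this one you need the additional observation that $|w_n|^2\rightharpoonup0$ in $L^2$, which follows from $w_n\rightharpoonup0$ in $H^1$, Rellich--Kondrachov ($w_n\to0$ in $L^4_{\mathrm{loc}}$), and boundedness of $|w_n|^2$ in $L^2$. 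With these two points made explicit, the decoupling $\mathcal P(\tilde v_n)=\mathcal P(v)+\mathcal P(w_n)+o(1)$ is fully justified, and the rest of your argument (the superadditivity of $(x,y)\mapsto x^{3/2}y^{1/2}$ forcing $a=b=1$, and the computation of the Euler--Lagrange equation) goes through as written.
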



Let us observe that if we have a solution $V$ to $-\alpha\Delta V+\beta\left(\lambda_1|V|^2V+\lambda_2(K\ast|V|^2)V\right)+V=0$ then $W=\gamma V(\rho x)$ satisfies 
\begin{equation*}
\begin{aligned}
-\frac12\Delta W&=-\frac{\gamma\rho^2}{2}\frac\alpha\alpha\Delta V(\rho x)=\frac{\gamma\rho^2}{2\alpha}\left(-\beta\left(\lambda_1|V|^2V+\lambda_2(K\ast|V|^2)V\right)(\rho x)-V(\rho x)\right)\\
&=\frac{\gamma\rho^2}{2\alpha}\left(-\frac{\beta}{\gamma^3}\left(\lambda_1|W|^2W+\lambda_2(K\ast|W|^2)W\right)-\frac1\gamma W\right)\\
&=-\frac{\rho^2\beta}{2\alpha\gamma^2}\left(\lambda_1|W|^2W+\lambda_2(K\ast|W|^2)W\right)-\frac{\rho^2}{2\alpha} W
\end{aligned}
\end{equation*}
and so by choosing 
\[
\rho^2=2\alpha \implies \frac{\rho^2\beta}{2\alpha\gamma^2}=\frac{\beta}{\gamma^2}
\]
and
\[
\gamma=\sqrt\beta
\]
we get 
\begin{equation*}
\begin{aligned}
-\frac12\Delta W+\left(\lambda_1|W|^2W+\lambda_2(K\ast|W|^2)W\right)+W=0.
\end{aligned}
\end{equation*}
Hence, with $V=v_m,$  $\alpha=3$ and $\beta=4m$ we get that $Q(x)=2\sqrt{m}v_m(\sqrt6 x)$ satisfies 
\begin{equation}\label{eq:Q}
-\frac12\Delta Q+\left(\lambda_1|Q|^2Q+\lambda_2(K\ast|Q|^2)Q\right)+Q=0.
\end{equation}
Consider therefore $Q$ be the minimizer for the Weinstein functional \eqref{wein:fun} which fulfils \eqref{eq:Q}. We have the following.


\begin{prop}\label{prop:coinc}
Under the hypothesis of \autoref{thm:1} the initial datum $u_0$ satisfies 
\begin{equation}\label{HR3}
\mathcal M(u_0)\mathcal{E}(u_0)<\mathcal M(Q)\mathcal{E}(Q)
\end{equation}
and 
\begin{equation}\label{HR4}
\|u_0\|_{L^2}\|\nabla u_0\|_{L^2}<\| Q\|_{L^2}\|\nabla Q\|_{L^2}.
\end{equation}

\noindent Moreover, with an analogous reasoning, conditions expressed in \eqref{HR3} and \eqref{HR4} imply that the initial datum falls into the hypothesis of  \autoref{thm:1}.
\end{prop}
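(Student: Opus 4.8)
The plan is to reduce each of the two pairs of conditions to the single pair of scalar inequalities $\mathcal E(u_0)<\gamma(c)$ and $\mathcal T(u_0)<6\gamma(c)$, after identifying the mountain pass level $\gamma(c)$ in terms of $Q$; throughout we take $u_0\not\equiv0$, so $c>0$. The first step is to record the algebraic identities for $Q$. Pairing \eqref{eq:Q} with $\bar Q$ and integrating gives $\tfrac12\mathcal T(Q)+\mathcal P(Q)+\mathcal M(Q)=0$, while the Pohozaev relation $\mathcal G(Q)=0$, i.e. $\mathcal P(Q)=-\tfrac23\mathcal T(Q)$, follows by differentiating $\lambda\mapsto\mathcal E\bigl(\lambda^{3/2}Q(\lambda\,\cdot)\bigr)$ at $\lambda=1$ (this uses only the homogeneity of $K$ under rescaling and thus avoids the kernel singularity discussed in the introduction). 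Combining, $\mathcal T(Q)=6\mathcal M(Q)$ and $\mathcal E(Q)=\tfrac16\mathcal T(Q)=\mathcal M(Q)$; in particular $\mathcal M(Q)\mathcal E(Q)=\mathcal M(Q)^2$ and $\mathcal M(Q)\mathcal T(Q)=6\,\mathcal M(Q)\mathcal E(Q)$.

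\emph{Identification of $\gamma(c)$.} By \eqref{def:vc}, every $u\in V(c)$ satisfies $\mathcal G(u)=0$, hence $\mathcal E(u)=\tfrac16\mathcal T(u)$ and $-\mathcal P(u)=\tfrac23\mathcal T(u)>0$. The sharp Gagliardo--Nirenberg-type inequality $-\mathcal P(v)\le\tfrac1m\|\nabla v\|_{L^2}^3\|v\|_{L^2}$ of \cite{AS}, with $m=J(Q)$ attained at $Q$, then gives $\tfrac23\mathcal T(u)\le\tfrac1m\mathcal T(u)^{3/2}\mathcal M(u)^{1/2}$, so $\mathcal M(u)\mathcal T(u)\ge\tfrac{4m^2}{9}=\mathcal M(Q)\mathcal T(Q)$ (the last equality from the previous step together with equality in Gagliardo--Nirenberg at $Q$). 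Therefore $c\,\mathcal E(u)=\tfrac16\mathcal M(u)\mathcal T(u)\ge\mathcal M(Q)\mathcal E(Q)$ for all $u\in V(c)$, whence $c\,\gamma(c)\ge\mathcal M(Q)\mathcal E(Q)$; conversely $w(x):=\tfrac{\mathcal M(Q)}{c}\,Q\bigl(\tfrac{\mathcal M(Q)}{c}\,x\bigr)$ lies in $V(c)$ — the family $\mu\,Q(\mu\,\cdot)$ keeps $\mathcal G$ equal to $\mu\,\mathcal G(Q)=0$ and has mass $\mu^{-1}\mathcal M(Q)$ — and $\mathcal E(w)=\mathcal M(Q)\mathcal E(Q)/c$, so $\gamma(c)=\mathcal M(Q)\mathcal E(Q)/c=\mathcal M(Q)^2/c$. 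Squaring and multiplying by $c=\mathcal M(u_0)$, and using $\mathcal M(Q)\mathcal T(Q)=6\,\mathcal M(Q)\mathcal E(Q)$, this turns \eqref{HR3} into $\mathcal E(u_0)<\gamma(c)$ and \eqref{HR4} into $\mathcal T(u_0)<6\gamma(c)$.

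\emph{The equivalence.} Recall the identity $\mathcal E-\tfrac13\mathcal G=\tfrac16\mathcal T$. If $u_0$ satisfies the hypotheses of \autoref{thm:1}, then \eqref{HR3} is immediate, and $\tfrac16\mathcal T(u_0)=\mathcal E(u_0)-\tfrac13\mathcal G(u_0)<\mathcal E(u_0)<\gamma(c)$ gives \eqref{HR4}. Conversely, assume \eqref{HR3}--\eqref{HR4}; then $\mathcal E(u_0)<\gamma(c)$, and it remains to rule out $\mathcal G(u_0)\le0$. If $\mathcal G(u_0)=0$ then $u_0\in V(c)$, so $\mathcal E(u_0)\ge\gamma(c)$ by \eqref{def:vc}, contradicting \eqref{HR3}. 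If $\mathcal G(u_0)<0$, set $u_0^\theta(x)=\theta^{3/2}u_0(\theta x)$; then $\mathcal G(u_0^\theta)=\theta^2\bigl(\mathcal T(u_0)+\tfrac32\theta\,\mathcal P(u_0)\bigr)$ is positive for small $\theta>0$ and negative at $\theta=1$, hence vanishes at a unique $\theta^\ast\in(0,1)$; since $u_0^{\theta^\ast}\in V(c)$ we get $\gamma(c)\le\mathcal E(u_0^{\theta^\ast})=\tfrac16(\theta^\ast)^2\mathcal T(u_0)<\tfrac16\mathcal T(u_0)$, i.e. $\mathcal T(u_0)>6\gamma(c)$, contradicting \eqref{HR4}. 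Hence $\mathcal G(u_0)>0$, so $u_0$ falls into the hypotheses of \autoref{thm:1}.

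The main obstacle is the identification $\gamma(c)=\mathcal M(Q)\mathcal E(Q)/c$, which is precisely where the sharp Gagliardo--Nirenberg inequality of \cite{AS} (with $Q$ as optimiser, up to scaling) and the Pohozaev identity $\mathcal G(Q)=0$ enter in an essential way; once $\gamma(c)$ is pinned down, both implications reduce to the elementary scaling manipulations above.
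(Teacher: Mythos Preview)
Your proof is correct and follows essentially the same scaling idea as the paper --- both hinge on identifying $\gamma(c)$ with $\mathcal E(Q_\mu)$ where $Q_\mu=\mu Q(\mu\cdot)$ is the rescaled Weinstein minimizer of mass $c$, and then use the identity $\mathcal E-\tfrac13\mathcal G=\tfrac16\mathcal T$. The difference is one of completeness rather than strategy: the paper simply asserts $\gamma(c)=\mathcal E(Q_\mu)$ (implicitly relying on the equivalence, from \cite{BJ} and \cite{AS}, between the Weinstein ground state and the mountain pass minimizer on $V(c)$), whereas you derive the formula $\gamma(c)=\mathcal M(Q)\mathcal E(Q)/c$ from scratch via the sharp Gagliardo--Nirenberg inequality and the variational characterization \eqref{def:vc}. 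You also carry out the converse direction explicitly, while the paper just says ``with an analogous reasoning''. So your argument is more self-contained, at the cost of a little extra length; the paper's version is terser but leans on background facts about ground states that a reader might not immediately supply.
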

\begin{proof}
From the definition of the quantities in \eqref{energy}, \eqref{kinetic:en} and   \eqref{potential:en} related to \eqref{GP}, we straightforwardly have
\begin{equation*} 
\mathcal E(u_0)-\frac 13 \mathcal G(u_0)=\frac 16 \mathcal T(u_0).
\end{equation*}

\noindent We notice that 
$Q_{\mu}=\mu Q(\mu x)$ is again a minimizer for the Weinstein functional with
\begin{equation*}
\begin{aligned}
\|Q_{\mu}\|_{L^2}^2&=\mu^{-1}\|Q\|_{L^2}^2, \\
\|\nabla Q_{\mu}\|_{L^2}^2&=\mu\|\nabla Q\|_{L^2}^2.
\end{aligned}
\end{equation*}
We notice that $Q(x)e^{i t}$ is a standing wave solution to the evolution equation and by the symmetry
of the equation it is well known that $Q_{\mu}e^{i \mu^2t}=\mu Q(\mu x)e^{i \mu^2t}$ is another   standing wave solution to
\begin{equation*}
-\frac12\Delta Q_{\mu}+\left(\lambda_1|Q_{\mu}|^2Q_{\mu}+\lambda_2(K\ast|Q_{\mu}|^2)Q_{\mu}\right)+\mu^2Q_{\mu}=0,
\end{equation*}
that necessarily satisfies
$\mathcal G(Q_{\mu})=0$. Hence 
$\mathcal E(Q_{\mu})=\frac 16  \|\nabla Q_{\mu}\|_{L^2}^2$. From the condition $\mathcal E(u)<\gamma(c)=\mathcal E(Q_{\mu})$ with $c=\|u\|_{L^2}^2=\|Q_{\mu}\|_{L^2}^2,$   we get
\begin{equation*}
\|u\|_{L^2}^2 \mathcal E(u)<\|Q\|_{L^2}^2\mathcal E(Q),   
\end{equation*}
which corresponds to \eqref{HR3}. Moreover if $\mathcal G(u)>0$ and $\mathcal E(u)<\gamma(c)=\mathcal E(Q_{\mu})$, then we have
\begin{equation*}
\frac 16  \|\nabla Q_{\mu}\|_{L^2}^2=\mathcal E(Q_{\mu})>\mathcal E(u)>\mathcal E(u)-\frac 13 \mathcal G(u)=\frac 16 \|\nabla u\|_{L^2}^2
\end{equation*}
and hence
\begin{equation*}
\|u\|_{L^2}\|\nabla u\|_{ L^2}< \|Q\|_{L^2}\|\nabla Q\|_{L^2}.
\end{equation*}
\end{proof}
\begin{remark}
It is worth doing a brief parallelism between the Cauchy problem for the Gross-Pitaevkii equation \eqref{GP} and the Cauchy problem  for the focusing cubic NLS in three dimension
\begin{equation}\label{NLS}
\left\{ \begin{aligned}
i\partial_{t}w+\frac12\Delta w&=-|w|^{2}w, \quad (t,x)\in \R\times 
\mathbb{R}^3\\
w(0,x)&=w_0(x)\in H^1
\end{aligned}\right..
\end{equation}
In \cite{HR}  sufficient conditions for global existence (and scattering) for \eqref{NLS} have been shown. They are given in terms of the energy and the mass of initial data with respect to the same quantities associated to the ground state $S$ for \eqref{NLS}, the latter being the solution to 
\begin{equation*}
-\frac12\Delta S + S =|S|^2S.
\end{equation*}
The two conditions are precisely 
\begin{equation}\label{HR1}
M(w_0)E(w_0)<M(S)E(S)
\end{equation}
and 
\begin{equation}\label{HR2}
\|w_0\|_{L^2}\|\nabla w_0\|_{L^2}<\| S\|_{L^2}\| \nabla S\|_{L^2},
\end{equation}
where 
\begin{equation*}
M(w)=\|w\|_{L^2}^2, \quad E(w)=\frac12\int |\nabla w(t)|^2-|w(t)|^4\,dx
\end{equation*}
are conserved along the NLS flow (notice that they are exactly the analogous quantities defined in \eqref{eq:mass} and \eqref{energy} for  \eqref{GP}).
Conditions \eqref{HR1} and \eqref{HR2} ensure a uniform bound on the $H^1$ norm of the solution $w(t)$ to \eqref{NLS} on its lifespan, hence it exists for every time according to the well-known blow-up alternative criterium. Therefore our assumptions  $\mathcal{G}(u)>0$ and $\mathcal E(u)<\gamma(c)$ play the same role for \eqref{GP} as \eqref{HR1} and \eqref{HR2} in the context of the cubic focusing NLS.
\end{remark}

We pass now to understand the geometry of $\mathcal{E}(u)$ on $S(c),$ and with this aim we introduce the scaling
\begin{equation*}
u^\mu(x)=\mu^{3/2}u(\mu x), \quad \mu>0.
\end{equation*}
The next lemma is contained in \cite{BJ}.
\begin{lemma}\label{lem:growth}{\cite[Lemma 3.3]{BJ}}
Let $u \in S(c)$ be such that $ \int (\lambda_1+\lambda_2 \hat K(\xi))(\widehat{ |u|^2})^2\,d\xi <0$ then we have: 
\begin{itemize}
\item[(1)] there exists a unique $\mu^{\star}(u)>0$, such that $u^{\mu^{\star}} \in V(c)$ (defined in \eqref{def:vc});
\item[(2)] the map $\mu \mapsto \mathcal{E}(u^{\mu})$ is concave on $[\mu ^{\star}, \infty)$;
\item[(3)] $\mu^{\star}(u)<1$ if and only if $\mathcal{G}(u)<0$;
\item[(4)] $\mu^{\star}(u)=1$ if and only if $\mathcal{G}(u)=0$;
\item[(5)] the functional $\mathcal G$ satisfies 
\begin{equation*}
\mathcal{G}(u^\mu)
\begin{cases}
 >0,\quad \forall\, \mu \in (0,\mu^\star(u))\\
 <0, \quad \forall\, \mu\in (\mu^\star(u),+\infty)
\end{cases};
\end{equation*}
\item[(6)] $\mathcal{E}(u^{\mu})<\mathcal{E}(u^{\mu^{\star}})$, for any $\mu>0$ and $\mu \neq \mu^{\star}$;
\item[(7)] $\frac{\partial}{\partial \mu} \mathcal{E}(u^{\mu})=\frac{1}{\mu}\mathcal{G}(u^{\mu})$, $\forall \mu>0$.
\end{itemize}
\end{lemma}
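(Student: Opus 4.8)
The plan is to reduce all seven items to elementary calculus on a single cubic polynomial in $\mu$.

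\emph{Step 1: scaling identities.} First I would compute how the scaling $u^\mu(x)=\mu^{3/2}u(\mu x)$ acts on the mass, on the kinetic energy $\mathcal T$ of \eqref{kinetic:en}, and on the potential energy $\mathcal P$ of \eqref{potential:en}. A change of variables gives at once $\|u^\mu\|_{L^2}^2=\|u\|_{L^2}^2=c$ (so $u^\mu\in S(c)$) and $\mathcal T(u^\mu)=\mu^2\mathcal T(u)$. For the nonlocal term, writing $g=|u|^2$ one has $|u^\mu|^2(x)=\mu^3 g(\mu x)$, hence $\widehat{|u^\mu|^2}(\xi)=\widehat{|u|^2}(\xi/\mu)$; substituting $\eta=\xi/\mu$ in the defining integral and using that $\hat K$ in \eqref{kernel:fou} is homogeneous of degree zero, so $\hat K(\mu\eta)=\hat K(\eta)$, yields $\mathcal P(u^\mu)=\mu^3\mathcal P(u)$. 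Setting $A:=\mathcal T(u)$ and $B:=\mathcal P(u)$, the hypotheses ensure $A>0$ (as $u\in S(c)$, $c>0$, is nontrivial) and $B<0$, so that
\begin{equation*}
\mathcal E(u^\mu)=\tfrac12 A\mu^2+\tfrac12 B\mu^3,\qquad
\mathcal G(u^\mu)=A\mu^2+\tfrac32 B\mu^3=\mu^2\Bigl(A+\tfrac32 B\mu\Bigr).
\end{equation*}

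\emph{Step 2: reading off the conclusions.} Item (7) is just $\partial_\mu\mathcal E(u^\mu)=A\mu+\tfrac32 B\mu^2=\tfrac1\mu\mathcal G(u^\mu)$. For item (1), $\mathcal G(u^\mu)=0$ with $\mu>0$ is equivalent to $A+\tfrac32 B\mu=0$, whose unique positive root is $\mu^\star(u)=-\tfrac{2A}{3B}=\tfrac{2\mathcal T(u)}{3|\mathcal P(u)|}$, and plugging it back shows $u^{\mu^\star}\in V(c)$. Item (5) follows since $\mu\mapsto A+\tfrac32 B\mu$ is strictly decreasing and vanishes exactly at $\mu^\star$, so $\mathcal G(u^\mu)$ is positive for $\mu<\mu^\star$ and negative for $\mu>\mu^\star$; evaluating at $\mu=1$, where $\mathcal G(u^1)=\mathcal G(u)=A+\tfrac32 B$, and comparing with $0$ gives items (3) and (4). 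Item (6) is immediate from item (7): $\partial_\mu\mathcal E(u^\mu)$ has the sign of $\mathcal G(u^\mu)$, i.e. of $\mu^\star-\mu$, so $\mu=\mu^\star$ is the strict global maximum of $\mu\mapsto\mathcal E(u^\mu)$ on $(0,\infty)$. Finally, for item (2) one computes $\partial_\mu^2\mathcal E(u^\mu)=A+3B\mu$; using $A=-\tfrac32 B\mu^\star$ this equals $\tfrac32 B\mu^\star<0$ at $\mu=\mu^\star$, and since it is decreasing in $\mu$ it stays negative on $[\mu^\star,\infty)$, giving (strict) concavity there.

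\emph{Expected difficulty.} I do not anticipate any substantial obstacle: the lemma is essentially bookkeeping once the scaling identities are in hand. The only point requiring care is the degree-zero homogeneity of $\hat K$ in Step 1, as it is exactly this feature that makes $\mathcal P$ scale as a clean $\mu^3$ (by contrast with the anisotropic, $\mu$-dependent rescaling used in \autoref{eq:contr}). After that, everything reduces to the elementary analysis of the cubic $\tfrac12 A\mu^2+\tfrac12 B\mu^3$ with $A>0>B$.
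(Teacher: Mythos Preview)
Your proposal is correct and follows essentially the same route as the paper: both reduce the lemma to the scaling identity $\mathcal E(u^\mu)=\tfrac12\mathcal T(u)\mu^2+\tfrac12\mathcal P(u)\mu^3$ and then do elementary one-variable calculus. Your argument is marginally more direct, factoring $\mathcal G(u^\mu)=\mu^2\bigl(A+\tfrac32 B\mu\bigr)$ to read off $\mu^\star$ explicitly, whereas the paper analyses the auxiliary function $y(\mu)=\mu A+\tfrac32\mu^2 B$ via its first and second derivatives; the content is the same.
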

\begin{proof}
Since \begin{equation}\label{energy:mu}
\mathcal{E}(u^{\mu})=\frac{\mu^2}{2}\mathcal{T}(u)+\frac{\mu^3}{2}\mathcal{P}(u)
\end{equation}
we have that
\begin{equation*}
 \frac{\partial}{\partial \mu} \mathcal{E}(u^{\mu}) = \mu \mathcal{T}(u)+\frac{3}{2}\mu^{2}\mathcal{P}(u)
= \frac{1}{\mu}\mathcal{G}(u^{\mu}).
\end{equation*}
Now we denote
\begin{equation*}
y(\mu)= \mu \mathcal{T}(u) + \frac{3}{2}\mu^2\mathcal{P}(u),
\end{equation*}
and we observe that $\mathcal{G}(u^{\mu})= \mu  y(\mu)$ which proves (7). After direct calculations, we  see that:
\begin{equation*}
\begin{aligned}
y^\prime(\mu)&= \mathcal{T}(u) +3\mu\mathcal{P}(u), \\
y^{\prime\prime}(\mu)&= 3\mathcal{P}(u).
\end{aligned}
\end{equation*}
\noindent From the expression of $y^\prime(\mu)$ and the assumption $\mathcal{P}(u)<0$ we know that $y^\prime(\mu)$ has a unique zero that we denote $\mu_0>0$ such that $\mu_0$ is the unique maximum point of $y(\mu)$. Thus in particular the function $y(\mu)$ satisfies:\begin{itemize}
\item[(i)] $y(\mu_0)=\max_{\mu>0}y(\mu);$
\item[(ii)] $\lim_{\mu\to +\infty}y(\mu)=-\infty;$
\item[(iii)] $y(\mu)$ decreases strictly in $[\mu_0, +\infty)$ and increases strictly in $(0, \mu_0]$.
\end{itemize}

\noindent By the continuity of $y(\mu)$, we deduce that $y(\mu)$ has a unique zero $\mu^{\star}>0$. Then $\mathcal{G}(u^{\mu^\star}) =0$ and point (1) follows. Points (2)--(5) are also easy consequences of (i)--(iii). 
Finally, since $y(\mu) >0$ on $(0, \mu^\star(u))$ and $y(\mu) <0$ on $(\mu^\star(u), \infty)$ we get (6). 
\end{proof} 

We are now in the position to state a lower bound for \eqref{GG}.
\begin{prop}\label{cor:lb} Under the hypothesis of \autoref{thm:1}
\begin{equation*}
4\int|\nabla u|^2\,dx+6\lambda_1\int|u|^4\,dx+6\lambda_2\int(K\ast|u|^2)|u|^2\,dx\geq 4\min\{\gamma(c)-\mathcal{E}(u_0), \mathcal{E}(u_0)\}:=\alpha.
\end{equation*}
\end{prop}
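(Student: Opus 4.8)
The plan is to recognize that the quantity on the left of \autoref{cor:lb} is nothing but $4\mathcal{G}(u)$, and then to reduce the statement to the \emph{a priori} bound $\mathcal{G}(u(t))\ge\min\{\gamma(c)-\mathcal{E}(u_0),\mathcal{E}(u_0)\}$ for the global solution. First I would note, via Plancherel and \eqref{potential:en}, that $\lambda_1\int|u|^4+\lambda_2\int(K\ast|u|^2)|u|^2=\mathcal{P}(u)$, so the left-hand side equals $4\mathcal{T}(u)+6\mathcal{P}(u)=4(\mathcal{T}(u)+\tfrac32\mathcal{P}(u))=4\mathcal{G}(u)$; and that under the hypothesis of \autoref{thm:1} the solution is global with $\mathcal{G}(u(t))>0$, while conservation of mass and energy give $\|u(t)\|_{L^2}^2=c$ and $\mathcal{E}(u(t))=\mathcal{E}(u_0)<\gamma(c)$. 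Hence it suffices to prove the purely variational statement: for any $w\in H^1$ with $\|w\|_{L^2}^2=c$, $\mathcal{G}(w)>0$ and $\mathcal{E}(w)<\gamma(c)$, one has $\mathcal{G}(w)\ge\min\{\gamma(c)-\mathcal{E}(w),\mathcal{E}(w)\}$; evaluating at $w=u(t)$ and multiplying by $4$ then finishes the proof with $\alpha=4\min\{\gamma(c)-\mathcal{E}(u_0),\mathcal{E}(u_0)\}$.

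To prove this variational inequality I would split according to the sign of $\mathcal{P}(w)$. If $\mathcal{P}(w)\ge0$, then $\mathcal{G}(w)-\mathcal{E}(w)=\tfrac12\mathcal{T}(w)+\mathcal{P}(w)\ge0$, so already $\mathcal{G}(w)\ge\mathcal{E}(w)\ge\min\{\gamma(c)-\mathcal{E}(w),\mathcal{E}(w)\}$ and we are done. If $\mathcal{P}(w)<0$, then \autoref{lem:growth} applies to $w$ (its hypothesis is exactly $(2\pi)^3\mathcal{P}(w)<0$), producing $\mu^\star=\mu^\star(w)$ with $w^{\mu^\star}\in V(c)$; since $\mathcal{G}(w)>0$, parts (3)--(4) force $\mu^\star>1$. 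I would then work with $f(\mu):=\mathcal{E}(w^\mu)$, using \eqref{energy:mu}, so that $f'(\mu)=\tfrac1\mu\mathcal{G}(w^\mu)$ (part (7)), $f'(1)=\mathcal{G}(w)$, $f''(\mu)=\mathcal{T}(w)+3\mu\mathcal{P}(w)$, and the identity $\mathcal{G}(w^{\mu^\star})=0$, which after dividing by $(\mu^\star)^2$ reads $\mathcal{T}(w)+\tfrac32\mu^\star\mathcal{P}(w)=0$, i.e. $\mathcal{P}(w)=-\tfrac{2}{3\mu^\star}\mathcal{T}(w)$.

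A further dichotomy on the size of $\mu^\star$ then closes the argument. If $\mu^\star\le2$, then for $\mu\ge1\ge\mu^\star/2$ one has $f''(\mu)=\mathcal{T}(w)+3\mu\mathcal{P}(w)\le\mathcal{T}(w)+\tfrac32\mu^\star\mathcal{P}(w)=0$ (using $\mathcal{P}(w)<0$), so $f$ is concave on $[1,\mu^\star]$ and lies below its tangent at $\mu=1$: by \eqref{def:vc}, $\gamma(c)\le\mathcal{E}(w^{\mu^\star})=f(\mu^\star)\le f(1)+f'(1)(\mu^\star-1)=\mathcal{E}(w)+\mathcal{G}(w)(\mu^\star-1)\le\mathcal{E}(w)+\mathcal{G}(w)$, the last step using $0<\mu^\star-1\le1$ and $\mathcal{G}(w)>0$; hence $\mathcal{G}(w)\ge\gamma(c)-\mathcal{E}(w)$. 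If instead $\mu^\star>2$, the identity $\mathcal{P}(w)=-\tfrac{2}{3\mu^\star}\mathcal{T}(w)$ gives $\mathcal{G}(w)-\mathcal{E}(w)=\tfrac12\mathcal{T}(w)+\mathcal{P}(w)=\mathcal{T}(w)\bigl(\tfrac12-\tfrac{2}{3\mu^\star}\bigr)\ge0$ since $\mu^\star>2>\tfrac43$ and $\mathcal{T}(w)\ge0$; hence $\mathcal{G}(w)\ge\mathcal{E}(w)$. In either case $\mathcal{G}(w)\ge\min\{\gamma(c)-\mathcal{E}(w),\mathcal{E}(w)\}$.

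I expect the only delicate point to be the focusing-like regime $\mathcal{P}(w)<0$: the clean tangent-line estimate for $\mathcal{E}(w^\mu)$ is available only where that function is concave, i.e. for $\mu\ge\mu^\star/2$, so it delivers $\mathcal{G}(w)\ge\gamma(c)-\mathcal{E}(w)$ only when $\mu^\star\le2$, and one genuinely needs the complementary bound $\mathcal{G}(w)\ge\mathcal{E}(w)$ — read off algebraically from $\mathcal{P}(w)=-\tfrac{2}{3\mu^\star}\mathcal{T}(w)$ — to cover $\mu^\star>2$. The overlap of the two regimes (any $\mu^\star>1$ lies in at least one) is exactly what produces the minimum on the right-hand side, and one must remember that \autoref{lem:growth} is only meaningful once $\mathcal{P}(w)<0$, which is why the sign-definite case $\mathcal{P}(w)\ge0$ is disposed of separately at the outset.
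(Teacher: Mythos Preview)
Your proof is correct and follows essentially the same route as the paper's: both split on the sign of $\mathcal{P}$, and in the case $\mathcal{P}<0$ both invoke \autoref{lem:growth} together with the concavity of $\mu\mapsto\mathcal{E}(w^\mu)$ on $[\mu^\star/2,\infty)$ to run a tangent-line (equivalently, mean-value) comparison between $\mathcal{E}(w^{\mu^\star})\ge\gamma(c)$ and $\mathcal{E}(w)$. The only cosmetic difference is that you phrase the secondary dichotomy directly as $\mu^\star\le 2$ versus $\mu^\star>2$ (using the explicit identity $\mu^\star=\tfrac{2\mathcal{T}}{3|\mathcal{P}|}$), whereas the paper phrases it as $|\tfrac{\mathcal{P}}{2}|\ge\mathcal{E}$ versus $|\tfrac{\mathcal{P}}{2}|<\mathcal{E}$ and then deduces $\mu^\star<2$ in the former branch; the two thresholds are not identical but the overlapping regimes make both splittings work.
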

\begin{proof}
We shall distinguish two cases which depend on the sign of the nonlinear term. At a given time $\tilde t$ we have either $\mathcal{P}(u(\tilde t))>0$ or $\mathcal{P}(u(\tilde t))<0$. Let $\tilde t$ therefore be an arbitrary but fixed time and simply denote $u = u(x,\tilde t)$. We consider both cases:\\

\noindent \emph{Case 1: $\mathcal{P}(u)>0.$} In this case the estimate is trivial. Indeed, $\mathcal{G}(u)=2\mathcal{E}(u)+\frac 12 \mathcal{P}(u)>2\mathcal{E}(u)>\mathcal{E}(u).$\\

\noindent \emph{Case 2: $\mathcal{P}(u)<0.$}
In this case we argue using the scaling of  \autoref{lem:growth}.  First let us notice that if $\left|\frac{\mathcal{P}(u)}{2}\right|< \mathcal{E}(u)$ then $\mathcal{G}(u)=2 \mathcal{E}(u)+\frac{\mathcal{P}(u)}{2}>\mathcal{E}(u)$ then the lower bound is achieved. We assume hence that $\left|\frac{\mathcal{P}(u)}{2}\right|\geq  \mathcal{E}(u)$.  Let us rescale the function $u$ according to the scaling of  \autoref{lem:growth} such that $u^{\mu^{\star}} \in V(c)$ and let us express $\mathcal{E}(u^{\mu^{\star}})-\mathcal{E}(u) $ as
\begin{equation*}
\mathcal{E}(u^{\mu^{\star}})-\mathcal{E}(u) = (\mu^{\star}-1) \frac{\partial}{\partial \mu}\mathcal{E}(u^{\mu})\mid _{\mu =\mu_0} 
\end{equation*}
with $1<\mu_0<\mu^{\star},$ according to the previous Lemma. Now we claim  that 
\begin{equation}\label{eq:claim}
\mu^{\star}<2 \quad\hbox{ and }\quad  \frac{\partial}{\partial \mu}\mathcal{E}(u^{\mu})\mid _{\mu =\mu_0} <\frac{\partial}{\partial \mu}\mathcal{E}(u^{\mu})\mid _{\mu =1}= \mathcal{G}(u).
\end{equation}
From the claim we get the desired estimate. Indeed,
\begin{equation*}
\gamma(c)-\mathcal{E}(u_0 )\leq \mathcal{E}(u^{\mu^{\star}})-\mathcal{E}(u)\leq \mathcal{G}(u),
\end{equation*}
and this concludes the proof. 

\noindent Thus, it remains to prove  \eqref{eq:claim}. From \eqref{energy:mu} we get that $\mathcal{E}(u^{\mu})<0$ for $\mu>\frac{\mathcal{T}(u)}{|\mathcal{P}(u)|},$ hence
$\mu^{\star}<\frac{\mathcal{T}(u)}{|\mathcal{P}(u)|}$. Now we notice that
the assumption $\frac12\left| P(u)\right|\geq  \mathcal{E}(u)$ implies that  $\frac{\mathcal{T}(u)}{|\mathcal{P}(u)|}=\frac{2\mathcal{E}(u)+|\mathcal{P}(u)|}{|\mathcal{P}(u)|}\leq2$. To prove  $\frac{\partial}{\partial \mu}\mathcal{E}(u^{\mu})\mid _{\mu =\mu_0} < \mathcal{G}(u)$ 
it is sufficient to show that $\frac{\partial}{\partial \mu}\mathcal{E}(u^{\mu})$ is monotone decreasing when $\mu>1$. Direct computation gives in fact
\begin{equation*}
\frac{\partial^2}{\partial^2 \mu}\mathcal{E}(u^{\mu})= \mathcal{T}(u)+3 \mu \mathcal{P}(u)<0
\end{equation*}
provided that $\mu>\frac{\mathcal{T}(u)}{3 |\mathcal{P}(u)|}=\frac{2\mathcal{E}(u)+|\mathcal{P}(u)|}{3|\mathcal{P}(u)|}$. Now the condition $\frac12\left| P(u)\right|\geq  \mathcal{E}(u)$ implies that \begin{equation*}
\frac{2\mathcal{E}(u)+|\mathcal{P}(u)|}{3|\mathcal{P}(u)|}\leq\frac23.
\end{equation*}
The fact that $\mu_0>1$ proves that claim. Summing up all the estimates we get
\begin{equation*}\mathcal{G}(u)>\min\{\gamma(c)-\mathcal{E}(u_0), \mathcal{E}(u_0)\}.
\end{equation*}
\end{proof}

\section{Small data theory and perturbative nonlinear results}\label{perturbative}
We collect here some perturbative results for \eqref{GP}. To lighten the exposition, we  just give here the statements, postponing the proofs until \autoref{app}.
The next two Lemmas are in the framework of the so-called \emph{Small data Theory,} which is actually the first cornerstone on which the Kenig and Merle approach is built. The first one ensures that if the initial datum is sufficiently small in the $H^1$-norm, then its nonlinear evolution under the Gross-Pitaevskii flow \eqref{GP} is global. 
\begin{lemma}\label{lemma3.1}
There exists a radius $\rho>0$ such that if $u_0\in B_{\rho}(H^1)$ then the corresponding solution $u\in\mathcal C(-T_{min},T_{max});H^1)$ to \eqref{GP} with $u_0$ as initial datum is global, i.e. $T_{min}=T_{max}=+\infty.$ 
\end{lemma}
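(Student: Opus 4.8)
The plan is to run a standard fixed-point/continuity argument in a Strichartz-type space, exploiting that the cubic local term and the nonlocal dipolar term are both, in effect, energy-subcritical cubic nonlinearities on $\R^3$. First I would recall the Duhamel formulation
\[
u(t)=U(t)u_0 - i\int_0^t U(t-s)\big(\lambda_1|u|^2u+\lambda_2(K\ast|u|^2)u\big)(s)\,ds,
\]
and set up the iteration in the space $X_I = C(I;H^1)\cap L^q_I W^{1,r}$ for a suitable $L^2$-admissible Strichartz pair (the natural choice on $\R^3$ being $(q,r)=(8/3,4)$, or the $\dot H^{1/2}$-type pair used in \cite{HR}, whichever makes the cubic estimate close cleanly). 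The key analytic input is that the Riesz-type kernel $K$ defines a Calderón--Zygmund operator, so $f\mapsto K\ast f$ is bounded on $L^p$ for all $1<p<\infty$; consequently $\|(K\ast|u|^2)u\|$ obeys exactly the same product/Hölder estimates as the local cubic term $|u|^2u$, both for the $L^r$ norm and for its gradient (using $\nabla(K\ast|u|^2)u = (K\ast \nabla|u|^2)u$ and Leibniz). This reduces the whole nonlinearity to the well-understood cubic NLS nonlinearity up to harmless constants depending on $|\lambda_1|,|\lambda_2|$.

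The steps, in order: (1) record the Strichartz estimates for $U(t)$ on $\R^3$; (2) prove the nonlinear estimate $\|\,\mathcal N(u)-\mathcal N(v)\,\|_{L^{q'}_I W^{1,r'}}\lesssim (\|u\|_{X_I}^2+\|v\|_{X_I}^2)\|u-v\|_{X_I}$ where $\mathcal N(u)=\lambda_1|u|^2u+\lambda_2(K\ast|u|^2)u$, using boundedness of $K\ast$ on Lebesgue spaces together with Sobolev embedding $H^1(\R^3)\hookrightarrow L^6$; (3) run the contraction mapping on a ball of $X_{\R}$ of radius $\sim \|u_0\|_{H^1}$, which closes globally once $\|u_0\|_{H^1}<\rho$ for $\rho$ small enough (the free term satisfies $\|U(t)u_0\|_{X_\R}\lesssim \|u_0\|_{H^1}$ by Strichartz on the whole line); (4) note that the resulting solution is the unique $H^1$-solution of \cite{CMS}, so $T_{\min}=T_{\max}=+\infty$. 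Alternatively, one can first produce a local solution, observe $\|u(t)\|_{H^1}$ stays comparable to $\|u_0\|_{H^1}$ by the conservation laws combined with the variational/Gagliardo--Nirenberg-type control in the unstable regime, and then bootstrap the small Strichartz norm to conclude global existence; I would present whichever is shorter.

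The main obstacle is purely bookkeeping: making sure the nonlocal term genuinely behaves like a cubic term at the level of the $W^{1,r'}$ Strichartz norm, i.e. controlling $\nabla\big[(K\ast|u|^2)u\big]$. The point to be careful about is that $K$ is not integrable (it is a principal-value kernel with the $|x|^{-3}$ singularity and zero average), so one must invoke the $L^p$-boundedness of the associated singular integral operator rather than Young's inequality; once that is in hand, $\|(K\ast\nabla|u|^2)u\|_{L^{r'}}\lesssim \|\nabla|u|^2\|_{L^a}\|u\|_{L^b}\lesssim \|\nabla u\|_{L^r}\|u\|_{L^6}^2$ for the appropriate exponents, and the estimate is identical to the local case. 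No genuine difficulty beyond this; the smallness of $\rho$ is extracted exactly as in the cubic NLS theory (e.g. \cite{HR}).
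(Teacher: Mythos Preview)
Your proposal is correct, but it takes a genuinely different route from the paper. Your primary argument is a direct small-data contraction in a global Strichartz space $C(\R;H^1)\cap L^{8/3}_\R W^{1,4}$, treating the dipolar term as a Calder\'on--Zygmund perturbation of the cubic nonlinearity; this yields global existence (and in fact Lemma~3.2 simultaneously) in one stroke. The paper instead proves Lemma~3.1 by a purely variational argument: it checks that for $\|u_0\|_{H^1}$ small one has $\mathcal G(u_0)>0$ (via $\mathcal G(u)\geq \|\nabla u\|_{L^2}^2 - C\|u\|_{L^2}\|\nabla u\|_{L^2}^3$ from Gagliardo--Nirenberg and the lower bound on $\mathcal P$) and $\mathcal E(u_0)<\gamma(c)$, and then simply invokes \autoref{thm:1} from \cite{BJ}, which already guarantees global existence. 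No Strichartz estimates enter the paper's proof of this particular lemma; they appear only later for Lemma~3.2.

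What each approach buys: the paper's argument is extremely short in context because the variational machinery (\autoref{thm:1}, the lower bound on $\mathcal P$) is already in place, and it cleanly separates the ``global existence'' step from the ``scattering'' step. Your Strichartz approach is the standard dispersive one, is independent of the variational structure, and has the advantage of delivering both Lemma~3.1 and Lemma~3.2 at once; it is essentially what the paper does for Lemma~3.2 anyway. Your ``alternatively'' remark at the end (conservation plus variational control of the $H^1$ norm) is in fact closest in spirit to what the paper actually does, though even there no Strichartz bootstrap is needed---\autoref{thm:1} already closes the argument.
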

\noindent The second one claims that if the initial datum is small enough (possibly smaller than the previous one), still in the $H^1$ space, then the global solution to \eqref{GP} actually behaves like a free wave asymptotically in time. 
\begin{lemma}\label{lemma3.2}
There exists $\delta>0$ such that for any $u_0\in B_{\delta}(H^1),$ the solution $u(t,x)$ to the Cauchy problem \eqref{GP} 
 {\em scatters} to a linear solution in $H^1$.
\end{lemma}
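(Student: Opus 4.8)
The plan is to prove \autoref{lemma3.2} by a standard small-data scattering argument built on Strichartz estimates, treating the local cubic term $|u|^2u$ and the nonlocal dipolar term $(K\ast|u|^2)u$ together as a single cubic-type nonlinearity. The key preliminary observation is that the convolution operator $f\mapsto K\ast f$ is bounded on $L^p(\R^3)$ for $1<p<\infty$: indeed, from \eqref{kernel:fou} the Fourier multiplier $\hat K(\xi)$ is a bounded, homogeneous-of-degree-zero, smooth-away-from-the-origin function, so $K$ is (the kernel of) a Calder\'on--Zygmund operator, hence Calder\'on--Zygmund theory gives $\|K\ast g\|_{L^p}\lesssim\|g\|_{L^p}$ for all $1<p<\infty$. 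Consequently $\|(K\ast|u|^2)v\|_{L^q}\lesssim\||u|^2\|_{L^{q_1}}\|v\|_{L^{q_2}}$ by H\"older with $1/q=1/q_1+1/q_2$, exactly as for the purely local term, and the gradient can be distributed using $\nabla\big((K\ast|u|^2)v\big)=(K\ast\nabla(|u|^2))v+(K\ast|u|^2)\nabla v$ together with the same $L^p$-boundedness. So both nonlinearities obey the same multilinear estimates in mixed Strichartz norms.

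With that in hand I would set up the contraction in the usual scattering-norm space. Fix an admissible (Strichartz) exponent pair adapted to the cubic NLS in $\R^3$ — the natural choice is to work in $L^q_tW^{1,r}_x(\R\times\R^3)$ with the $H^1$-subcritical pair $(q,r)=(8,\tfrac{12}{5})$ (or the equivalent pair used in \cite{CMS}), whose dual exponents make the trilinear estimate close — and define the complete metric space
\begin{equation*}
X=\left\{u\in C(\R;H^1)\cap L^q_tW^{1,r}_x\ :\ \|u\|_{L^\infty_tH^1_x}+\|u\|_{L^q_tW^{1,r}_x}\le 2C_0\delta\right\}
\end{equation*}
with metric from the $L^q_tW^{1,r}_x$ norm, where $C_0$ is the Strichartz constant. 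On $X$ the Duhamel map $\Phi(u)(t)=U(t)u_0-i\int_0^tU(t-s)\big(\lambda_1|u|^2u+\lambda_2(K\ast|u|^2)u\big)(s)\,ds$ satisfies, by the inhomogeneous and homogeneous Strichartz inequalities together with the trilinear bound above, $\|\Phi(u)\|_{L^q_tW^{1,r}_x\cap L^\infty_tH^1_x}\le C_0\|u_0\|_{H^1}+C_1\|u\|_{L^q_tW^{1,r}_x}^2\|u\|_{L^\infty_tH^1_x}\le C_0\delta+8C_1C_0^3\delta^3$, and a similar difference estimate with a contraction factor $\lesssim\delta^2$. Choosing $\delta$ so small that $8C_1C_0^2\delta^2<1/2$ (and $\delta<\rho$ from \autoref{lemma3.1}, though global existence is in any case recovered here) makes $\Phi$ a contraction on $X$; its fixed point is the global solution, and it has finite global Strichartz norm $\|u\|_{L^q_tW^{1,r}_x}<\infty$.

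Finally, from finiteness of the global scattering norm one extracts the asymptotic states in the standard way: set $u^{+}=u_0-i\int_0^{\infty}U(-s)\big(\lambda_1|u|^2u+\lambda_2(K\ast|u|^2)u\big)(s)\,ds$, which converges in $H^1$ because the tail $\big\|\int_t^{\infty}U(-s)F(s)\,ds\big\|_{H^1}\lesssim\|F\|_{L^{q'}_{[t,\infty)}W^{1,r'}_x}\lesssim\|u\|_{L^q_{[t,\infty)}W^{1,r}_x}^2\|u\|_{L^\infty_tH^1_x}\to0$ as $t\to\infty$ by dominated convergence; then $\|u(t)-U(t)u^{+}\|_{H^1}=\big\|\int_t^\infty U(t-s)F(s)\,ds\big\|_{H^1}\to0$, and symmetrically for $u^{-}$ as $t\to-\infty$. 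This is exactly \autoref{def:scat}, so the lemma follows. I expect the only genuinely non-routine point to be the first one — justifying that the nonlocal term $(K\ast|u|^2)u$ can be handled by the same Strichartz multilinear machinery as the local cubic term, i.e. the $L^p$-boundedness of the Riesz-type multiplier $\hat K$ and the product/derivative estimates it yields; once that is granted, the contraction and the construction of $u^{\pm}$ are completely standard and are the content deferred to \autoref{app}.
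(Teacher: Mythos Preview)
Your approach is correct and essentially coincides with the paper's: both arguments hinge on the $L^p$-boundedness of $f\mapsto K\ast f$ so that the dipolar term obeys the same trilinear Strichartz estimates as the local cubic, then run a small-data fixed-point/continuity argument to obtain a finite global Strichartz norm from which the asymptotic states $u^\pm$ are extracted exactly as you describe. The only cosmetic differences are that the paper works in $L^{8/3}_tW^{1,4}_x$ and uses a bootstrap rather than a contraction; note also that your single pair $(8,\tfrac{12}{5})$ does not by itself make the trilinear H\"older exponents close in the form $\|u\|_{L^q_tW^{1,r}_x}^2\|u\|_{L^\infty_tH^1_x}$, so in practice one combines it with $(8/3,4)$ (or simply uses the latter), but this is bookkeeping and not a gap in the argument.
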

\noindent The following states that if a global solution to \eqref{GP} enjoys some uniform spacetime control, then it scatters.
\begin{lemma}\label{lemma3.3}
If $u(t,x)\in\mathcal{C}(\mathbb{R};H^1)\cap L^8L^4$ is a solution to \eqref{GP}, then it scatters.
\end{lemma}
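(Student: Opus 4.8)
The plan is to deduce $H^1$-scattering from the finiteness of $\|u\|_{L^8L^4}$ by the now-standard Strichartz bootstrap, the only genuinely new ingredient being the reduction of the nonlocal cubic term to the local one. First I would exploit that $\hat K\in L^\infty(\R^3)$ — indeed $\hat K\in[-\tfrac43\pi,\tfrac83\pi]$ by \eqref{kernel:fou} — so the operator $f\mapsto K\ast f$ is a Fourier multiplier with bounded symbol, hence bounded on $L^p(\R^3)$ for every $1<p<\infty$ (and, by the Bessel potential calculus, the same is true for $\langle\nabla\rangle^{-1}(K\ast\cdot)$, $\langle\nabla\rangle=(1-\Delta)^{1/2}$). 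Writing $\mathcal N(u)=\lambda_1|u|^2u+\lambda_2(K\ast|u|^2)u$, this yields the pointwise-in-time estimates $\|\mathcal N(u)\|_{L^r}\lesssim\|u\|_{L^{3r}}^3$ and, after the Leibniz rule together with the same $L^p$-boundedness applied to $K\ast\nabla|u|^2$ and $K\ast|u|^2$, $\|\nabla\mathcal N(u)\|_{L^r}\lesssim\|u\|_{L^a}^2\|\nabla u\|_{L^b}$ whenever $\tfrac1r=\tfrac2a+\tfrac1b$ with all exponents in $(1,\infty)$. Thus $\mathcal N(u)$ and $\nabla\mathcal N(u)$ obey exactly the product estimates of the defocusing cubic nonlinearity $|u|^2u$, and the singularity of $K$ at the origin never enters — only its $L^p\to L^p$ mapping property.

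Next I would run the bootstrap. I would work with the $\dot H^{1/2}$-admissible pair $(8,4)$ together with a family of $L^2$-admissible pairs sufficient for the $H^1$-subcritical cubic theory in dimension three (e.g.\ $(\infty,2)$, $(2,6)$, $(8/3,4)$), and let $Z(I)$ denote an appropriate intersection of the corresponding $L^2$-admissible Strichartz spaces over an interval $I$, taken at the $H^1$ level (i.e.\ for $\langle\nabla\rangle u$). Combining the Strichartz inequalities with the nonlinear bounds above and choosing the Hölder triples so that two of the three factors carry the $L^8_IL^4$ norm, one gets for any $t_0\in I$
\begin{equation*}
\|\langle\nabla\rangle u\|_{Z(I)}\lesssim\|u(t_0)\|_{H^1}+\|u\|_{L^8_IL^4}^2\,\|\langle\nabla\rangle u\|_{Z(I)}.
\end{equation*}
Since $u\in L^8(\R;L^4)$ by hypothesis, I would partition $\R=\bigcup_{j=1}^MI_j$ into finitely many intervals with $\|u\|_{L^8_{I_j}L^4}\le\eta$, where $\eta$ is small enough (depending only on the Strichartz constants) to absorb the last term; a continuity argument then gives $\|\langle\nabla\rangle u\|_{Z(I_j)}\lesssim\|u(t_j)\|_{H^1}$, in particular a bound on $\sup_{I_j}\|u(t)\|_{H^1}$, and iterating over the finitely many pieces yields $\sup_{t\in\R}\|u(t)\|_{H^1}<\infty$ and $\|\langle\nabla\rangle u\|_{Z(\R)}<\infty$.

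With the global Strichartz bounds in hand the conclusion is the usual Cauchy-criterion argument. For $0<t_1<t_2$ (the negative-time case being identical), the Duhamel formula gives
\begin{equation*}
\big\|U(-t_2)u(t_2)-U(-t_1)u(t_1)\big\|_{H^1}=\Big\|\int_{t_1}^{t_2}U(-s)\mathcal N(u(s))\,ds\Big\|_{H^1}\lesssim\|u\|_{L^8_{[t_1,t_2]}L^4}^2\,\|\langle\nabla\rangle u\|_{Z([t_1,t_2])},
\end{equation*}
which vanishes as $t_1,t_2\to+\infty$ because the global norms on the right are finite. Hence $u^{\pm}:=\lim_{t\to\pm\infty}U(-t)u(t)$ exists in $H^1$, and applying the same estimate to $u(t)-U(t)u^{\pm}=-i\int_t^{\pm\infty}U(t-s)\mathcal N(u(s))\,ds$ shows $\|u(t)-U(t)u^{\pm}\|_{H^1}\to0$, i.e.\ scattering in the sense of \autoref{def:scat}.

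The hard part is the nonlinear estimate for $\nabla\mathcal N(u)$ together with the exponent bookkeeping in the second step: one must choose the Hölder triples so that the smallness-carrying factor $\|u\|_{L^8_IL^4}$ appears twice while the surviving $\langle\nabla\rangle u$ lands in an $L^2$-admissible Strichartz space and the resulting source-term pair is dual-admissible. For $|u|^2u$ this is the classical three-dimensional cubic computation; for the nonlocal term the Calder\'on--Zygmund bound of the first step makes it go through verbatim, so the net difficulty over the local cubic NLS is essentially nil.
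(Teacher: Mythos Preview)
Your proposal is correct and follows essentially the same approach as the paper: bootstrap the $L^2$-admissible Strichartz norm (the paper uses the single pair $(8/3,4)$ and the estimate $\|u\|_{L^{8/3}_IW^{1,4}}\lesssim\|u(t_0)\|_{H^1}+\|u\|_{L^8_IL^4}^2\|u\|_{L^{8/3}_IW^{1,4}}$) by choosing subintervals on which $\|u\|_{L^8_IL^4}$ is small, then conclude scattering via the Cauchy criterion on $U(-t)u(t)$. The only cosmetic differences are that you partition $\R$ into finitely many pieces while the paper makes a single cut at a large time $T$, and you phrase the target space as a generic intersection $Z(I)$ rather than the concrete $L^{8/3}_IW^{1,4}$.
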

\begin{remark}
It is worth mentioning that if a $\mathcal{C}(\mathbb{R};H^1)$ solution to \eqref{GP} scatters, then it belongs to $ L^8L^4.$
\end{remark}

\noindent We conclude this preliminary tools section with a small perturbation result.
\begin{lemma}\label{lemma3}
For every $M>0$ there exist $\varepsilon=\varepsilon(M)>0$ and $C=C(M)>0$ such that: if $u(t,x)\in\mathcal{C}(\mathbb{R};H^1)$ is the unique global solution to \eqref{GP} and $w\in\mathcal{C}(\mathbb{R};H^1)\cap L^8L^4$ is a global solution to the perturbed problem 
\begin{equation*}
\left\{ \begin{aligned}
i\partial_{t}w+\frac12\Delta w&=\lambda_1|w|^{2}w+\lambda_2(K\ast|w|^2)w+e(t,x)\\
w(0,x)&=w_0\in H^1
\end{aligned} \right. 
\end{equation*}
satisfying the conditions $\|w\|_{L^8L^4}\leq M,$
$\|\int_{t_0}^{t} U(t-s)e(s)\,ds\|_{L^8L^4}\lesssim\varepsilon$ and
$\|U(t-t_0)(u(t_0)-w(t_0))\|_{L^8L^4}\leq\varepsilon,$ 
then $u\in L^8L^4$ and $\|u-w\|_{L^8L^4}\lesssim\varepsilon.$
\end{lemma}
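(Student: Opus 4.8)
The plan is to run the standard long-time perturbation (stability) argument of Kenig--Merle type, see \cite{KM1, KM2}, the only genuinely equation-specific point being the nonlocal term, which I dispose of first. \textbf{Step 1: the convolution is harmless.} By \eqref{kernel:fou} the multiplier $\hat K$ is bounded, smooth on $\R^3\setminus\{0\}$ and homogeneous of degree $0$, hence satisfies the Mikhlin--H\"ormander condition; therefore $f\mapsto K\ast f$ is bounded on $L^p(\R^3)$ for every $1<p<\infty$ (and trivially on $L^2$). Since one cannot bound $\bigl|K\ast|u|^2\bigr|$ pointwise by $|u|^2$, this $L^p$ boundedness has to be invoked inside \emph{every} application of a nonlinear estimate; granted that, all the product and difference estimates for $(K\ast|u|^2)u$ reduce to those for $|u|^2u$. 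In particular, writing $v=u-w$ and $N(\phi)=\lambda_1|\phi|^2\phi+\lambda_2(K\ast|\phi|^2)\phi$,
\[
N(w+v)-N(w)=\lambda_1\bigl(|w+v|^2(w+v)-|w|^2w\bigr)+\lambda_2\Bigl(\bigl(K\ast(|w+v|^2-|w|^2)\bigr)(w+v)+(K\ast|w|^2)\,v\Bigr),
\]
and each summand on the right is estimated, using the operator bound for $K\ast$, exactly as one estimates the cubic difference $|w+v|^2(w+v)-|w|^2w$ (which is linear-plus-quadratic in $v$).

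\textbf{Step 2: the estimates.} I would then record the Strichartz and nonlinear estimates at the scaling-critical ($\dot H^{1/2}$) level in three dimensions -- note that $(8,4)$ is $\dot H^{1/2}$-admissible since $\tfrac28+\tfrac34=1$ -- namely, on any interval $I$ and for a suitable auxiliary Strichartz norm $\|\cdot\|_{S(I)}$ dominating $\|\cdot\|_{L^8_IL^4_x}$,
\[
\Bigl\|\int_{t_0}^{t}U(t-s)\,N(\phi)(s)\,ds\Bigr\|_{S(I)}\lesssim \|\phi\|_{L^8_IL^4_x}^2\,\|\phi\|_{S(I)},
\]
together with the analogous difference estimate bounding $\int_{t_0}^t U(t-s)\bigl(N(w+v)-N(w)\bigr)\,ds$ by $\bigl(\|w\|_{L^8_IL^4_x}^2+\|v\|_{L^8_IL^4_x}^2\bigr)\|v\|_{S(I)}$ plus mixed terms of the same homogeneity. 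These are precisely the inequalities underlying \autoref{lemma3.1} and \autoref{lemma3.3}, localized to a subinterval.

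\textbf{Step 3: finite iteration.} Given $M$, fix a small absolute constant $\delta>0$ and split $\R$ into $N=N(M,\delta)$ consecutive intervals $I_j=[t_j,t_{j+1}]$ with $\|w\|_{L^8_{I_j}L^4_x}\le\delta$ on each; this is possible because $\|w\|_{L^8L^4}\le M$, and $N$ depends only on $M$ and $\delta$. On $I_0$ the function $v=u-w$ solves $i\partial_t v+\tfrac12\Delta v=N(w+v)-N(w)-e$ with $v(t_0)=u(t_0)-w(t_0)$; by Duhamel's formula, the estimates of Step 2, the smallness $\|w\|_{L^8_{I_0}L^4_x}\le\delta$, the hypothesis $\|U(\cdot-t_0)(u(t_0)-w(t_0))\|_{L^8L^4}\le\varepsilon$ and the hypothesis on the Duhamel term of $e$, a continuity/bootstrap argument gives $\|v\|_{S(I_0)}\le C_0\varepsilon$ provided $\varepsilon$ is small. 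One then propagates to $I_1$: $\|U(\cdot-t_1)v(t_1)\|_{L^8L^4}$ is controlled, via Strichartz, by $\varepsilon$ plus the quantities already bounded on $I_0$, hence by $C_1\varepsilon$, and the argument on $I_0$ is repeated. After the $N$ steps this yields $\|u-w\|_{L^8L^4}\le (CC_0)^{N}\varepsilon$, and choosing $\varepsilon=\varepsilon(M)$ so small that $(CC_0)^N\varepsilon$ is within the asserted bound -- legitimate since $N=N(M)$ -- together with $u=w+v\in L^8L^4$, concludes the proof.

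\textbf{Main obstacle.} The delicate point is Step 1: one must resist the tempting but false pointwise bound on $K\ast|u|^2$ and instead route every nonlinear estimate through the $L^p$-boundedness of the Calder\'on--Zygmund operator $K\ast$, checking that the product and difference estimates survive this detour with the right homogeneity. The remaining difficulty is the bookkeeping in Step 3: the errors accumulate geometrically over the $N$ intervals, so one must track that the final smallness demanded of $\varepsilon$ depends only on $M$, keeping in mind that $N\to\infty$ as $\delta\to0$.
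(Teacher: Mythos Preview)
Your proposal is correct and follows essentially the same route as the paper: both handle the dipolar term via the $L^p$-boundedness of $K\ast\cdot$, partition the time axis into finitely many subintervals on which $\|w\|_{L^8_{I_j}L^4}\le\delta$, run a bootstrap on each subinterval, and propagate the smallness geometrically. The only cosmetic difference is that the paper dispenses with an auxiliary norm $S(I)$ and works directly in $L^8L^4$ by invoking the non-admissible inhomogeneous Strichartz estimate $\bigl\|\int_{t_0}^t U(t-s)F\,ds\bigr\|_{L^8_IL^4}\lesssim\|F\|_{L^{8/3}_IL^{4/3}}$, which lets all the nonlinear and difference estimates close in those two spaces alone.
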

\begin{remark}\label{rem:nlp}

Let us mention that the term $e(t,x)$ in \autoref{lemma3} is meant as an error. The main tool in the construction of a minimal (with respect to the energy) global but not scattering solution to \eqref{GP} which will be carried out in \autoref{min:el}, will be the profile decomposition theorem contained in the Section below. It is worth mentioning since now on that the profile decomposition theorem is a purely linear statement, though we are concerned with the construction of a nonlinear solitone-like solution. Therefore \autoref{lemma3} will be crucial to absorb the error terms that will be introduced once one passes from the linear profiles given by \autoref{thm:pd} to nonlinear profiles.
\end{remark}
\section{Linear Profile Decomposition and Nonlinear Profiles}\label{profile}
The main ingredient in the construction of the minimal element is a suitable profile decomposition theorem. It is worth mentioning that this kind of result goes back to the work of G\'erard, see \cite{Ger}, where is given an explicit characterization of the defect of compactness for the Sobolev embeddings. Pioneering results for evolution equations are the works by Bahouri and G\'erard, see \cite{BG}, for the critical wave equation, by Keraani, see \cite{Ker}, about the defect of compactness for the Strichartz embeddings, and by Merle and Vega, see \cite{MV}, how to treat concentration phenomena for the two dimensional mass critical nonlinear Schr\"odinger equation.

If it were not for the presence of the nonlocal term, the next linear result in the non-radial setting would be exactly the same given by Duyckaerts, Holmer and Roudenko in \cite{DHR}, which in turn extended the one in Holmer and Roudenko \cite{HR} removing the spherical symmetry assumption of the this last mentioned paper. For \eqref{GP}, an additional term must be dealt with, so at first we state the theorem in \cite{DHR}, then we will show how to manage with the nonlocal term of \eqref{GP}.
\begin{theorem}\label{thm:pd}  Given a bounded sequence $\{v_n\}_{n\in\mathbb{N}}\subset H^1,$ $\forall\, J\in\mathbb{N}$ and  $\forall\,1\leq j\leq J$ there exist sequences of time and space translation parameters $\{t_n^j\}_{n\in\mathbb{N}}\subset\R,\,\{x_n^j\}_{n\in\mathbb{N}}\subset\R^3$ and profiles $\psi^j\in H^1$ for $j=1, \dots,J$ and $R^J_n$ such that, up to subsequences,
\begin{equation}\label{deco2}
v_n=\sum_{1\leq j\leq J}U(-t_n^j)\tau_{x_n^j}\psi^j+R_n^J
\end{equation} 
with the following properties:
\begin{itemize}
\item{[Dichotomy of the parameters]} for any fixed $j\in\{1,\dots,J\}$
\begin{align*}
either\quad t_n^j=0\quad \forall\, n\in\mathbb{N}\quad &or \quad t_n^j\ra\pm\infty,\\
either\quad x_n^j=0\quad \forall\, n\in\mathbb{N}\quad &or \quad x_n^j\ra\pm\infty;
\end{align*} 
\item{[Divergence property]} for any $j\neq k\in\{1,\dots,J\}$
\begin{equation*}
|x_n^j-x_n^k|+|t_n^j-t_n^k| \overset{n\ra\infty}\longrightarrow\infty;
\end{equation*}
\item{[Smallness of the remainder]} $\forall\,\varepsilon>0\quad\exists\,\tilde J=\tilde J(\varepsilon)$ such that, for any $J\geq\tilde J$
\begin{equation}\label{small:rem} 
\limsup_{n\ra\infty}\|U(t)R_n^J\|_{L^\infty L^3\cap L^8L^4}\leq\varepsilon;
\end{equation}
\item{[Pythagorean expansion of  mass and kinetic energy]} the mass and the quadratic energy term are almost orthogonal, namely as $n\to\infty$
\begin{align}\label{mass:dec}
\|v_n\|_{L^2}^2=&\sum_{1\leq j\leq J}\|\psi^j\|_{L^2}^2+\|R_n^J\|_{L^2}^2+o(1), \quad \forall\, J\in\mathbb{N},\\\label{kinetic:dec}
\|v_n\|^2_{H^1}=&\sum_{1\leq j\leq J}\|\psi^j\|^2_{H^1}+\|R_n^J\|^2_{H^1}+o(1), \quad \forall\, J\in\mathbb{N};
\end{align}
\item{[Pythagorean expansion of the local potential energy]} $\forall\, J\in\mathbb{N},$ as $n\to\infty$
\begin{equation}\label{energy:dec1}
\|v_n\|_{L^4}^4=\sum_{1\leq j\leq J} \|U(-t_n^j) \tau_{x_n^j}\psi^j\|_{L^4}^4+\|R_n^J\|_{L^4}^4+o(1).
\end{equation}
\end{itemize}
\end{theorem}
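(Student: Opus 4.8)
\emph{Approach.} The statement is the linear profile decomposition for the Schr\"odinger propagator $U(t)=e^{it\Delta/2}$ with respect to the $\dot H^{1/2}$--critical defect norm $L^\infty L^3\cap L^8L^4$, restricted to sequences bounded in $H^1$, and it coincides with the decomposition of Duyckaerts--Holmer--Roudenko \cite{DHR}; the plan is to reproduce their iterated bubble--extraction scheme. The only genuinely analytic ingredient is an \emph{inverse Strichartz inequality}: if $\{w_n\}_{n\in\N}\subset H^1$ is bounded and $\limsup_{n\to\infty}\|U(t)w_n\|_{L^\infty L^3\cap L^8L^4}=\delta>0$, then, along a subsequence, there exist $\{t_n\}_{n\in\N}\subset\R$, $\{x_n\}_{n\in\N}\subset\R^3$ and $\psi\in H^1$ with $U(t_n)\tau_{-x_n}w_n\rightharpoonup\psi$ in $H^1$ and a quantitative lower bound $\|\psi\|_{H^1}\gtrsim\delta^{a}(\sup_n\|w_n\|_{H^1})^{-b}$ for suitable $a,b>0$. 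This follows from a refined Strichartz estimate bounding $\|U(t)f\|_{L^8L^4}$ by a fractional power of $\|f\|_{\dot H^1}$ times a weaker frequency--localised norm, after a Littlewood--Paley decomposition: the $H^1$ bound confines the relevant dyadic block to frequencies of order one — high blocks are suppressed by the $\dot H^1$ bound, low blocks by dispersion together with Bernstein's inequality — which is precisely why no scaling parameter appears; the same $H^1$ bound, since $\|e^{ix\cdot\xi}f\|_{\dot H^1}\to\infty$ as $|\xi|\to\infty$, also excludes a Galilean parameter, so that only the translations displayed in \eqref{deco2} survive. One then localises in physical space and in time using the resulting pointwise concentration and passes to the weak limit.

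\emph{Iteration and the remainder.} With $\psi^1$ obtained, set $R_n^1=v_n-U(-t_n^1)\tau_{x_n^1}\psi^1$; since $U(-t_n^1)\tau_{x_n^1}$ is an isometry of $H^1$ and of $L^2$, weak convergence yields the two--term identities $\|R_n^1\|_{L^2}^2=\|v_n\|_{L^2}^2-\|\psi^1\|_{L^2}^2+o(1)$ and $\|R_n^1\|_{H^1}^2=\|v_n\|_{H^1}^2-\|\psi^1\|_{H^1}^2+o(1)$. Repeating the extraction on $R_n^1$, and inductively on $R_n^{j}$, produces the profiles $\psi^j$ and remainders $R_n^J$, and summing the two--term identities gives \eqref{mass:dec} and \eqref{kinetic:dec} for every $J$. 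In particular $\sum_j\|\psi^j\|_{H^1}^2\le\limsup_n\|v_n\|_{H^1}^2<\infty$, so $\|\psi^j\|_{H^1}\to0$; inserting this into the quantitative lower bound of the inverse Strichartz inequality forces $\limsup_n\|U(t)R_n^J\|_{L^\infty L^3\cap L^8L^4}\to0$ as $J\to\infty$, which is \eqref{small:rem}. Along the way one maintains the invariant $U(t_n^j)\tau_{-x_n^j}R_n^J\rightharpoonup0$ in $H^1$ for all $j\le J$, preserved by the construction and used below.

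\emph{Normalising the parameters and the $L^4$ expansion.} The dichotomy $t_n^j=0$ or $t_n^j\to\pm\infty$ (and likewise in $x$) is obtained by a diagonal clean--up: whenever $\{t_n^j\}$ has a bounded subsequence one passes to it, lets $t_n^j\to t_\infty^j$, replaces the profile by its fixed--time translate $U(-t_\infty^j)\psi^j$, and absorbs the residual error — which tends to $0$ in $H^1$ by strong continuity of $U(\cdot)$ — into $R_n^J$; the analogous space translation puts $x_n^j=0$. The divergence property is forced by the construction: if $|t_n^j-t_n^k|+|x_n^j-x_n^k|$ stayed bounded for some $j<k$, then re-centring $R_n^{k-1}$ at $(t_n^k,x_n^k)$ amounts to applying a strongly convergent family of $H^1$--isometries to the weakly null sequence $U(t_n^j)\tau_{-x_n^j}R_n^{k-1}$, whence $\psi^k=0$, contradicting $\delta>0$ at that step. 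Finally, \eqref{energy:dec1} is proved by expanding $\|v_n\|_{L^4}^4$ with $v_n=\sum_{j\le J}U(-t_n^j)\tau_{x_n^j}\psi^j+R_n^J$: profile--profile cross terms vanish as $n\to\infty$ because for $j\ne k$ either $|x_n^j-x_n^k|\to\infty$ (spatial separation, after approximating the profiles by compactly supported functions) or $|t_n^j-t_n^k|\to\infty$ (dispersive decay $\|U(t)\phi\|_{L^4}\to0$), and the cross terms involving $R_n^J$ vanish for the same two reasons combined with the invariant above, which via Rellich gives $R_n^J\to0$ in $L^4_{loc}$ after re-centring at each $(t_n^j,x_n^j)$.

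\emph{Main obstacle.} The hard part is the inverse Strichartz step: producing a nontrivial weak limit with a quantitative lower bound while ruling out a scaling or Galilean parameter. Excluding those is exactly where boundedness in $H^1$ — rather than in $\dot H^1$ alone — is essential, and the refined Strichartz / Littlewood--Paley argument that pins the bubble at a fixed frequency and a fixed location in the non--radial setting is the only delicate point; the rest is bookkeeping resting on weak convergence, the isometry of $U(t)$ and $\tau_y$ on $H^1$, and the dispersive decay of the free flow.
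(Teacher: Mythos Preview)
The paper does not actually prove this theorem: it is stated verbatim as the linear profile decomposition of Duyckaerts--Holmer--Roudenko \cite{DHR} and invoked as a black box, the paper's own contribution in this section being only the nonlocal Pythagorean expansion (Proposition~\ref{pd2}). Your sketch is a correct and faithful outline of precisely the DHR argument --- inverse Strichartz via a refined Littlewood--Paley estimate, $H^1$ boundedness pinning the bubble at unit frequency (hence no scaling or Galilean parameters), iterated extraction with the summability of $\|\psi^j\|_{H^1}^2$ forcing the remainder small, and the standard parameter clean-up and cross-term analysis for \eqref{energy:dec1} --- so there is nothing to compare beyond noting that you have supplied what the paper merely cites.
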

\begin{remark}
Let us briefly comment that the dichotomy of the parameters comes from the fact that if the sequence $\{t_n\}_{n\in\N}$ is bounded, then up to subsequences it converges to some $\bar t$ and similarly can be claimed for the space translation sequence $\{x_n\}_{n\in\N}.$ Therefore, in case the sequences of parameters are bounded, up to translate in time and space the profiles $\psi^j$, they can be assumed to be the trivial ones. The divergence property instead implies that the profiles weakly interact each other, leading to the orthogonality relations in the theorem.
\end{remark}
If the nonlocal term in \eqref{GP} were not present, namely $\lambda_2=0,$ summing up \eqref{kinetic:dec} and \eqref{energy:dec1} would lead to the so called orthogonal decomposition of the energy. But in our case we must deal with the nonlocal interaction term $\lambda_2\int (K\ast|v|^2)|v|^2.$ The next proposition aims to show exactly that what is inferred in  \eqref{energy:dec1} has its counterpart also for the dipolar interaction energy. 
\begin{prop}\label{pd2} Under the same hypothesis of \autoref{thm:pd}, the following Orthogonal Expansion of the nonlocal energy can be claimed:  for any $J\in\N,$ as $n\to\infty$
\begin{equation}\label{nonlocal:dec}
\begin{aligned}
\int (K\ast|v_n|^2)|v_n|^2\,dx&=\sum_{1\leq j\leq J} \int (K\ast|U(-t_n^j) \tau_{x_n^j}\psi^j|^2)|U(-t_n^j) \tau_{x_n^j}\psi^j|^2\,dx\\
&+\int (K\ast|R^J_n|^2)|R^J_n|^2\,dx+o(1).
\end{aligned}
\end{equation}
\end{prop}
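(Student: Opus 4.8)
The plan is to pass through the Fourier side --- equivalently, to exploit that $f\mapsto K\ast f$ is a bounded, self\nobreakdash-adjoint operator on $L^{2}$ whose kernel decays like $|x|^{-3}$ away from the origin --- in order to reduce the identity to finitely many ``cross terms'', and then to make those vanish using the separation of the profiles.

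\emph{Preliminaries on the dipolar convolution.} Since $\hat K\in L^\infty$ (indeed $\hat K\in[-\tfrac43\pi,\tfrac83\pi]$), the operator $f\mapsto K\ast f$ is bounded on $L^2$, hence by Plancherel and duality $\bigl|\int(K\ast f)g\,dx\bigr|\lesssim\|f\|_{L^2}\|g\|_{L^2}$; moreover $K$ is even, so $\int(K\ast f)g\,dx=\int f\,(K\ast g)\,dx$, and away from the origin $K$ is an honest function with $|K(x)|\le 2|x|^{-3}$, so that $|(K\ast g)(x)|\lesssim|x|^{-3}\|g\|_{L^1}$ whenever $g$ is supported in $B(0,R)$ and $|x|\ge 2R$. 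Finally, for $v,w\in H^1$ one has $v\bar w\in L^2$ with $\|v\bar w\|_{L^2}\lesssim\|v\|_{H^1}\|w\|_{H^1}$, and since $U(t)$ is unitary on $L^2$ and on $H^1$ while $\tau_{x}$ is an isometry, the norms $\|\phi_n^j\|_{H^1}=\|\psi^j\|_{H^1}$ and $\|R_n^J\|_{H^1}$ --- where we abbreviate $\phi_n^j:=U(-t_n^j)\tau_{x_n^j}\psi^j$ --- are bounded uniformly in $n$; consequently all the $L^2$-norms of products occurring below are uniformly bounded.

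\emph{Reduction to cross terms.} From \eqref{deco2} write $|v_n|^2=\sum_{j\le J}|\phi_n^j|^2+|R_n^J|^2+\varepsilon_n$, with $\varepsilon_n=\sum_{j\ne k}\phi_n^j\overline{\phi_n^k}+2\RE\sum_{j}\phi_n^j\overline{R_n^J}$. The two orthogonality limits $\int|\phi_n^j|^2|\phi_n^k|^2\,dx\to0$ ($j\ne k$) and $\int|\phi_n^j|^2|R_n^J|^2\,dx\to0$ ($j\le J$), which are precisely the limits underlying the local expansion \eqref{energy:dec1} (they follow, after approximating the profiles by $\mathcal C_c^\infty(\R^3)$ functions, by separating the regime where some $t_n^\bullet\to\pm\infty$ --- where one invokes the dispersive decay $\|U(-t_n^\bullet)\psi\|_{L^4}\lesssim|t_n^\bullet|^{-3/4}\|\psi\|_{L^{4/3}}\to0$ --- from the regime where all relevant $t_n^\bullet=0$ --- where one uses either $|x_n^j-x_n^k|\to\infty$, or the compact embedding $H^1(B)\hookrightarrow L^2(B)$ applied to the weakly null sequence $U(t_n^j)\tau_{-x_n^j}R_n^J\rightharpoonup0$ in $H^1$), give $\|\varepsilon_n\|_{L^2}\to0$. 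Expanding $\int(K\ast|v_n|^2)|v_n|^2\,dx$ bilinearly, every term containing $\varepsilon_n$ in one of the two slots is $O(\|\varepsilon_n\|_{L^2})=o(1)$ by the bilinear bound above, so, using also the symmetry $\int(K\ast f)g=\int f(K\ast g)$,
\begin{equation*}
\begin{split}
\int(K\ast|v_n|^2)|v_n|^2\,dx
&=\sum_{j\le J}\int(K\ast|\phi_n^j|^2)|\phi_n^j|^2\,dx+\int(K\ast|R_n^J|^2)|R_n^J|^2\,dx\\
&\quad+\sum_{j\ne l}\int(K\ast|\phi_n^j|^2)|\phi_n^l|^2\,dx+2\sum_{j\le J}\int(K\ast|\phi_n^j|^2)|R_n^J|^2\,dx+o(1),
\end{split}
\end{equation*}
where the third sum runs over $j,l\in\{1,\dots,J\}$ with $j\ne l$. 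Thus \eqref{nonlocal:dec} reduces to showing that each of the two families of cross terms on the last line tends to $0$ as $n\to\infty$.

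\emph{Vanishing of the cross terms, and the main obstacle.} For $j\ne l$, conjugating by $\tau_{x_n^l}$ turns $\int(K\ast|\phi_n^j|^2)|\phi_n^l|^2\,dx$ into $\int\bigl(K\ast|U(-t_n^j)\psi^j|^2\bigr)(x-(x_n^j-x_n^l))\,|U(-t_n^l)\psi^l|^2(x)\,dx$: if $t_n^j$ or $t_n^l\to\pm\infty$ the crude bound $\lesssim\|\phi_n^j\|_{L^4}^2\|\phi_n^l\|_{L^4}^2$ with dispersive decay of one factor gives $o(1)$, while if $t_n^j=t_n^l=0$ then $|x_n^j-x_n^l|\to\infty$ by the divergence property, $|\psi^l|^2$ is compactly supported, and $|(K\ast|\psi^j|^2)(w)|\lesssim|w|^{-3}\|\psi^j\|_{L^2}^2$ for $|w|$ large, so the integral is $\lesssim|x_n^j-x_n^l|^{-3}\to0$. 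The genuinely nonlocal term $\int(K\ast|\phi_n^j|^2)|R_n^J|^2\,dx=\int|R_n^J|^2\,(K\ast|\phi_n^j|^2)\,dx$ is the delicate point: here $\|R_n^J\|_{L^4}$ need not be small, so the crude bound fails. Writing $K\ast|\phi_n^j|^2=\tau_{x_n^j}\bigl(K\ast|U(-t_n^j)\psi^j|^2\bigr)$: if $t_n^j\to\pm\infty$ it is $\lesssim\|R_n^J\|_{L^4}^2\|U(-t_n^j)\psi^j\|_{L^4}^2\to0$; if $t_n^j=0$, split $\R^3=\{|x-x_n^j|<\rho\}\cup\{|x-x_n^j|\ge\rho\}$, so that on the ball $\int_{|x-x_n^j|<\rho}|R_n^J|^2\,dx=\int_{|z|<\rho}|\tau_{-x_n^j}R_n^J|^2\,dz\to0$ by the local Rellich compactness ($\tau_{-x_n^j}R_n^J\rightharpoonup0$ in $H^1$), while on the complement $|(K\ast|\psi^j|^2)(x-x_n^j)|\lesssim\rho^{-3}\|\psi^j\|_{L^2}^2$ and $\|R_n^J\|_{L^2}^2$ is bounded; letting first $n\to\infty$ and then $\rho\to\infty$ gives $o(1)$. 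This exhausts the cross terms and proves \eqref{nonlocal:dec}. In each of the limits above, reducing to $\psi^j\in\mathcal C_c^\infty$ is legitimate since replacing a profile by an $H^1$-close smooth compactly supported function perturbs every term in \eqref{nonlocal:dec} by at most a constant multiple of the $H^1$-error, uniformly in $n$, by the bilinear bound and the uniform $H^1$ bounds. The heart of the matter is the nonlocal cross term $\int(K\ast|R_n^J|^2)|\phi_n^j|^2\,dx$: there the $L^4$-boundedness of $R_n^J$ and $\phi_n^j$ alone is insufficient, and one must combine the $|x|^{-3}$ decay of the dipolar kernel off the origin with either the dispersive $L^4$-decay of $U(-t_n^j)\psi^j$ or the local compactness of the weakly null remainder around the space parameter $x_n^j$ --- the point where the analysis genuinely departs from the local term \eqref{energy:dec1}.
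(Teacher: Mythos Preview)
Your argument is correct, but it follows a genuinely different route from the paper's. The paper never proves (or uses) that $\|\varepsilon_n\|_{L^2}\to0$ or that the remainder is locally compact around each profile's core; instead it introduces an auxiliary index $L\ge J$, adds and subtracts $\int(K\ast|v_n-R_n^L|^2)|v_n-R_n^L|^2$ and $\int(K\ast|R_n^J-R_n^L|^2)|R_n^J-R_n^L|^2$, and then bounds the resulting three blocks by the smallness of $\|R_n^L\|_{L^4}$ for $L$ large --- that smallness being interpolated from \eqref{small:rem} and the $H^1$ bound on $R_n^L$. The profile--profile cross terms are killed via a Riemann--Lebesgue type vanishing $\int(K\ast\tau_z g)\tau_{z'}h\,dx\to0$ as $|z-z'|\to\infty$, rather than through the explicit $|x|^{-3}$ pointwise decay you use.

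What each approach buys: your route is more transparent --- you expand once, isolate the cross terms, and kill them one by one using dispersion, pointwise kernel decay, or Rellich --- and it avoids the double limit in $(n,L)$. On the other hand, it relies on the weak convergence $U(t_n^j)\tau_{-x_n^j}R_n^J\rightharpoonup0$ in $H^1$, which is standard in any profile decomposition (it is precisely how the profiles are extracted) but is not among the properties listed in the paper's \autoref{thm:pd}; you should flag that you are using it. The paper's detour through the large auxiliary $L$ is designed so that only the explicitly stated properties \eqref{small:rem}, \eqref{mass:dec}, \eqref{kinetic:dec} are invoked, at the price of a slightly heavier bookkeeping. Both arguments ultimately hinge on the same two mechanisms: the $L^4$ dispersive decay when a time parameter diverges, and a spatial decoupling (yours via the kernel's $|x|^{-3}$ tail, the paper's via Riemann--Lebesgue / translation weak convergence) when only the space parameters separate.
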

\begin{proof} 
Up to reordering the indexes, we may suppose that there exists $\bar J$ such that: 
\begin{itemize}
\item[\emph{Case 1}:] $t^j_n=0$ for any $n\in\N,$ if $1\leq j\leq\bar J;$ 
\item[\emph{Case 2}:] $|t^j_n|\to\infty$ as $n\to\infty$ if $\bar J+1\leq j\leq J.$
\end{itemize}
\noindent Due to the divergence property in \autoref{thm:pd}, in the situation of \emph{Case 1}, given two different indexes $j\neq k\in\left\{1,\dots,\bar J\right\}$ then $|x^j_n-x^k_n|\to\infty$ as $n\to\infty$ and this implies the weak interaction for the cross term in the expression below:
\begin{equation*}
\begin{aligned}
&\iint K(x-y)\left|\sum_{j=1}^{\bar J}U(-t^j_n)\psi^j(y-x^j_n)\right|^2\left|\sum_{j=1}^{\bar J}U(-t^j_n)\psi^j(x-x^j_n)\right|^2\,dy\,dx\\
=&\iint K(x-y)\left|\sum_{j=1}^{\bar J}\psi^j(y-x^j_n)\right|^2\left|\sum_{j=1}^{\bar J}\psi^j(x-x^j_n)\right|^2\,dy\,dx.
\end{aligned}
\end{equation*}
More precisely, since $K\ast\tau_zg=\tau_z(K\ast g),$ then as $|z-z^\prime|\to\infty$
\begin{equation}\label{cross}
\int (K\ast\tau_zg)(x)\tau_{z^\prime}h(x)\,dx=\int \tau_z(K\ast g)(x)\tau_{z^\prime}h(x)\,dx=o(1).
\end{equation}
Hence \eqref{cross} implies that in the situation delineated in  \emph{Case 1} 
\begin{equation}\label{cross3}
\begin{aligned}
&\iint K(x-y)\left|\sum_{j=1}^{\bar J}\psi^j(y-x^j_n)\right|^2\left|\sum_{j=1}^{\bar J}\psi^j(y-x^j_n)\right|^2\,dy\,dx\\
=&\sum_{j=1}^{\bar J}\iint K(x-y)\left|\psi^j(y-x^j_n)\right|^2\left|\psi^j(y-x^j_n)\right|^2\,dy\,dx\\
=&\sum_{j=1}^{\bar J}\iint K(x-y)\left|U(-t^j_n)\psi^j(y-x^j_n)\right|^2\left|U(-t^j_n)\psi^j(y-x^j_n)\right|^2\,dy\,dx.
\end{aligned}
\end{equation}
Under the condition illustrated in \emph{Case 2} instead, the continuity property of the operator $K\ast f,$ mapping continuously $L^p$ into itself for any $p\in(1,\infty),$ yields, by using the Cauchy-Schwarz inequality and the continuity property with $p=2,$
\begin{equation}\label{cross4}
\begin{aligned}
&\int (K\ast(|U(-t^j_n)\psi^j(\cdot-x^j_n)|^2))(x)|U(-t^j_n)\psi^j(\cdot-x^j_n)|^2(x)\,dx\\
&\lesssim\|K\ast(|U(-t^j_n)\psi^j(\cdot-x^j_n)|^2)\|_{L^2}\||U(-t^j_n)\psi^j(\cdot-x^j_n)|^2\|_{L^2}\\
&\lesssim \|U(-t^j_n)\psi^j\|_{L^4}^4\overset{n\to\infty}{\longrightarrow}0.
\end{aligned}
\end{equation}
The last decay property follows by the dispersive estimate \eqref{disp:est2} of the Schr\"odinger free propagator and the fact that $U(t)$ is an isometry on $L^2$ (actually, on any $H^s$ space), concluding with a density argument by considering at first $\psi^j\in L^{4/3}\cap H^1.$ Summing up the results in \eqref{cross3} and \eqref{cross4} we claim that for $n\to\infty$
\begin{equation*}
\begin{aligned}
&\iint K(x-y)\left|\sum_{j=1}^{J}U(-t^j_n)\psi^j(y-x^j_n)\right|^2\left|\sum_{j=1}^{J}U(-t^j_n)\psi^j(x-x^j_n)\right|^2\,dy\,dx\\
&=\sum_{j=1}^{ J}\iint K(x-y)\left|U(-t^j_n)\psi^j(y-x^j_n)\right|^2\left|U(-t^j_n)\psi^j(x-x^j_n)\right|^2\,dy\,dx+o(1).
\end{aligned}
\end{equation*}
Recall from \eqref{nonlocal:dec} that we aim to prove that the quantity
\begin{equation}\label{cross5}
\begin{aligned}
\mathcal I:=&\int (K\ast|v_n|^2)|v_n|^2\,dx-\sum_{j=1}^{ J}\int (K\ast|U(-t^j_n)\tau_{x^j_n}\psi^j|^2)|U(-t^j_n)\tau_{x^j_n}\psi^j|^2\,dx-\int (K\ast|R^J_n|^2)|R_n^J|^2\,dx\\
=&\int (K\ast|v_n|^2)|v_n|^2\,dx-\sum_{j=1}^{ J}\int (K\ast|U(-t^j_n)\tau_{x^j_n}\psi^j|^2)|U(-t^j_n)\tau_{x^j_n}\psi^j|^2\,dx-\int (K\ast|R^J_n|^2)|R_n^J|^2\,dx\\
\pm&\int (K\ast|v_n-R^L_n|^2)|v_n-R^L_n|^2\,dx\pm\int (K\ast|R^J_n-R^L_n|^2)|R^J_n-R^L_n|^2\,dx
\end{aligned}
\end{equation}
goes to zero as $n\to\infty,$ where $L$ is a fixed positive integer. To shorten the notation we define
\begin{equation*}
\begin{cases}
g_n^L=v_n-R^L_n\\
r^{L,J}_n=R^L_n-R^J_n\\
u_n^j=U(-t^j_n)\tau_{x^j_n}\psi^j
\end{cases} 
\end{equation*}
therefore 
$\mathcal I$ can be estimated as 
\begin{equation*}
\begin{aligned}
\mathcal I&\leq\left|\int (K\ast |v_n|^2)|v_n|^2\,dx-\int (K\ast |g_n^L|^2)|g_n^L|^2\,dx\right|\\
&+\left|\int (K\ast |r^{L,J}_n|^2)|r^{L,J}_n|^2\,dx-\int (K\ast|R^J_n|^2)|R^J_n|^2\,dx\right|\\
&+\left|\int (K\ast |g_n^L|^2)|g_n^L|^2\,dx-\sum_{j=1}^{ J}\int (K\ast |u_n^j|^2)|u_n^j|^2\,dx-\int (K\ast |r^{L,J}_n|^2)|r^{L,J}_n|^2\,dx\right|\\
&=I+II+III
\end{aligned}
\end{equation*}
and we show that this three quantities go towards zero as $n\to\infty.$ Let us begin with the first term $I.$
\begin{equation*}
\begin{aligned}
I&\leq\left|\iint K(x-y)\left( |v_n(y)|^2|v_n(x)|^2-|(g_n^L(y)|^2|g_n^L(x)|^2\right)\,dy\,dx\right|\\
&\leq \left|\iint K(x-y)\Pi^4\left(v_n(x),\bar v_n(x),v_n(y),\bar v_n(y),R^L_n(x),\bar R^L_n(x),R^L_n(y),\bar R^L_n(y)\right)\,dy\,dx\right|,
\end{aligned}
\end{equation*}
where $\Pi^4\left(v_n(x),\bar v_n(x),v_n(y),\bar v_n(y),R^K_n(x),\bar R^L_n(x),R^L_n(y),\bar R^L_n(y)\right)$ is an homogeneous polynomial of order $4$ not involving any monomial consisting in only $v_n,\bar v_n$'s terms, hence it can be estimated by using the continuity property of the convolution with the kernel $K,$ obtaining 
\begin{equation*}
\begin{aligned}
I&\lesssim \|R^L_n\|_{L^4}^4+\|v_n\|_{L^4}^3\|R^L_n\|_{L^4}+\|v_n\|_{L^4}^2\|R^L_n\|_{L^4}^2\\
&\lesssim \|R^L_n\|_{L^4}^4+\|R^L_n\|_{L^4}^2+\|R^L_n\|_{L^4},
\end{aligned}
\end{equation*}
where we used that $\{v_n\}_{n\in\N}$ is uniformly bounded in $L^4$ since it is bounded in $H^1,$ by Sobolev embedding. Since 
\begin{equation*}
\begin{aligned}
\|R^L_n\|_{L^4}&\leq \|U(t)R^L_n\|_{L^\infty L^4}\leq \||U(t)R^L_n|^{1/2}\|_{L^\infty L^{12}}\||U(t)R^L_n|^{1/2}\|_{L^\infty L^6}\\
&\leq \|U(t)R^L_n\|^{1/2}_{L^\infty L^3}\|U(t)R^L_n\|_{L^\infty L^6}^{1/2}
\end{aligned}
\end{equation*}
combining \eqref{small:rem} and \eqref{kinetic:dec} one obtains 
\begin{equation*}
\limsup_{J\to\infty}\lim_{n\to\infty}\|R^J_n\|_{L^\infty L^4}=0.
\end{equation*}

\noindent Hence, for any $\varepsilon>0$ there exists $L$ large enough and $\bar n$ such that for any $n>\bar n$ the term $I\leq\varepsilon/3.$ The same analysis can be carried out for $II,$ therefore also $II\leq\varepsilon/3.$ 
Let turn the attention on the last term 
\begin{equation*}
III=\left|\int (K\ast| g_n^L|^2)|g_n^L|^2\,dx-\sum_{j=1}^{ J}\int (K\ast |u_n^j|^2)|u_n^j|^2\,dx-\int (K\ast|r^{L,J}_n|^2)|r^{L,J}_n|^2\,dx\right|.
\end{equation*}
By definition 
\begin{equation*}
\begin{cases}
g_n^L=v_n-R^L_n=\sum_{j=1}^Lu^j_n\\
g_n^J=v_n-R^J_n=\sum_{j=1}^Ju^j_n\\
R^L_n-R^J_n=\sum_{j=L+1}^Ju^j_n
\end{cases},
\end{equation*}
then
\begin{equation*}
\begin{aligned}
III&\leq\left|\iint K(x-y)\left|\sum_{j=1}^Lu^j_n(y)\right|^2\left|\sum_{j=1}^Lu^j_n(x)\right|^2\,dy\,dx-\sum_{j=1}^{L}\iint K(x-y) |u_n^j(y)|^2|u_n^j(x)|^2\,dy\,dx\right|\\
&+\left|\iint K(x-y)\left|\sum_{j=L+1}^Ju^j_n(y)\right|^2\left|\sum_{j=L+1}^Ju^j_n(x)\right|^2\,dy\,dx-\sum_{j=L+1}^{J}\iint K(x-y) |u_n^j(y)|^2|u_n^j(x)|^2\,dy\,dx\right|\\
&\leq \varepsilon/6+\varepsilon/6=\varepsilon/3.
\end{aligned}
\end{equation*}
Hence \eqref{nonlocal:dec} is proved.
\end{proof}
As immediate corollary we have:
\begin{corollary}\label{cor:dec}
Under the hypothesis of \autoref{thm:pd}, for the decomposition in \eqref{deco2} the Pythagorean expansion of the energy holds true, namely: $\forall J\in N$ as $n\to+\infty$
\begin{equation*}
\mathcal E(v_n)=\sum_{1\leq j\leq J} \mathcal E(U(-t_n^j) \tau_{x_n^j}\psi^j)+\mathcal E(R_n^J)+o(1).
\end{equation*}
\end{corollary}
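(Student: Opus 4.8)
The plan is to observe that $\mathcal E$ is, up to fixed constants, a sum of three functionals for each of which the orthogonal (Pythagorean) expansion along the decomposition \eqref{deco2} is either already recorded in \autoref{thm:pd} or has just been established in \autoref{pd2}. Recalling \eqref{energy}, \eqref{kinetic:en}, \eqref{potential:en}, I would first write
\[
\mathcal E(u)=\frac12\,\mathcal T(u)+\frac{\lambda_1}{2}\int|u|^4\,dx+\frac{\lambda_2}{2}\int(K\ast|u|^2)|u|^2\,dx ,
\]
so that it suffices to prove the claimed decomposition for $\mathcal T(\cdot)$, for $\|\cdot\|_{L^4}^4$, and for $\int(K\ast|\cdot|^2)|\cdot|^2\,dx$ separately, and then add the three identities with weights $\tfrac12,\tfrac{\lambda_1}{2},\tfrac{\lambda_2}{2}$; since for each fixed $J$ only finitely many $o(1)$ error terms appear, they combine harmlessly as $n\to\infty$.

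Next I would treat the kinetic term, which is the only piece not directly handed to us. Subtracting the mass expansion \eqref{mass:dec} from the $H^1$ expansion \eqref{kinetic:dec} gives, for each fixed $J$ and as $n\to\infty$,
\[
\|\nabla v_n\|_{L^2}^2=\sum_{1\le j\le J}\|\nabla\psi^j\|_{L^2}^2+\|\nabla R_n^J\|_{L^2}^2+o(1).
\]
Since $U(-t_n^j)$ and $\tau_{x_n^j}$ act as isometries on $\dot H^1(\R^3)$, we have $\|\nabla\psi^j\|_{L^2}=\|\nabla\big(U(-t_n^j)\tau_{x_n^j}\psi^j\big)\|_{L^2}$, and therefore $\mathcal T(v_n)=\sum_{1\le j\le J}\mathcal T\big(U(-t_n^j)\tau_{x_n^j}\psi^j\big)+\mathcal T(R_n^J)+o(1)$. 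For the quartic term the analogous identity is exactly \eqref{energy:dec1}, and for the dipolar interaction term it is exactly \eqref{nonlocal:dec} from \autoref{pd2}.

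Finally I would combine the three identities linearly to obtain, for every $J\in\mathbb N$ and as $n\to\infty$,
\[
\mathcal E(v_n)=\sum_{1\le j\le J}\mathcal E\big(U(-t_n^j)\tau_{x_n^j}\psi^j\big)+\mathcal E(R_n^J)+o(1),
\]
which is the assertion. I do not expect a genuine obstacle here: the delicate point — the asymptotic orthogonality of the nonlocal dipolar interaction energy, which has no analogue in the purely local setting of \cite{DHR} — has already been absorbed into \autoref{pd2}, so the remaining work is just the elementary extraction of the kinetic decomposition from \eqref{mass:dec}–\eqref{kinetic:dec} together with the $\dot H^1$-isometry property of $U(-t_n^j)\tau_{x_n^j}$, plus linearity.
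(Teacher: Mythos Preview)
Your proposal is correct and matches the paper's approach exactly: the paper states the corollary as an ``immediate'' consequence of \autoref{thm:pd} and \autoref{pd2} without giving a proof, and what you have written is precisely the unpacking of that immediacy --- extracting the kinetic decomposition from \eqref{mass:dec} and \eqref{kinetic:dec}, invoking \eqref{energy:dec1} and \eqref{nonlocal:dec}, and combining linearly.
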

As already pointed out in \autoref{rem:nlp}, we will need to associate nonlinear profiles to the linear ones of \autoref{thm:pd}. The existence of such nonlinear waves basically comes from the local well-posedness theory for \eqref{GP} and the existence of wave operators. More precisely, once we consider a pair $(\psi^j, t^j_n)$ as above, a nonlinear profile corresponding to it is given by means of a solution $\Psi^j$ to \eqref{GP} satisfying 
\begin{equation*}
\|\Psi^j(t^j_n)-U(t^j_n)\psi^j\|_{H^1}\overset{n\to\infty}\longrightarrow 0.
\end{equation*}
Since the dichotomy property of the parameters in \autoref{thm:pd} allows us to restrict the situation only on the cases $t^j_n=0$ or $t^j_n\to\infty,$ finding such $\Psi^j$ reduces to solving the Cauchy problem for the GPE at $t_0=0$ or at $t_0=\infty,$ namely it suffices to solve 
\begin{equation*}
\Psi^j=U(t)\psi^j+i\int_{0}^tU(t-s)\left(\lambda_1|\Psi^j|^2\Psi^j+\lambda_2(K\ast|\Psi^j|^2)\Psi^j\right)(s)\,ds
\end{equation*} 
and
\begin{equation*}
\Psi^j=U(t)\psi^j+i\int_{\infty}^tU(t-s)\left(\lambda_1|\Psi^j|^2\Psi^j+\lambda_2(K\ast|\Psi^j|^2)\Psi^j\right)(s)\,ds,
\end{equation*} 
where the equations above are referred as the integral formulation of the solution to \eqref{GP},  also known as Duhamel's representation.
\section{Existence of the Minimal Element and its properties}\label{min:el}
Once the profile decomposition is proved, combinations of arguments  in \autoref{sec:var:est} and  \autoref{perturbative} give the following.
\begin{theorem}\label{lemcri}
There exists a not trivial initial profile $u_{sl,0}\in H^1$
with $\mathcal E(u_{sl,0})<\gamma(\|u_{sl,0}\|_{L^2}^2)$ and $\mathcal G(u_{sl,0})>0$ such that  the corresponding solution $u_{sl}$ to \eqref{GP} is globally defined but  does not scatter. Moreover, there exists a continuous function $x(t) : \R^+\mapsto\R^3$ such that $\{u_{sl}(t,x-x(t)),\,t\in\R^+\}$ is precompact as a subset of $H^1.$
\end{theorem}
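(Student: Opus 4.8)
The plan is to run the standard Kenig--Merle induction-on-energy argument, adapted to the nonlocal equation \eqref{GP}, using the variational estimates of \autoref{sec:var:est}, the perturbative results of \autoref{perturbative}, and the profile decomposition with nonlocal orthogonality from \autoref{profile}. First I would define the threshold
\[
\mathcal E^\ast=\sup\bigl\{\delta>0 \ : \ \text{every }u_0\in H^1\text{ with }\mathcal E(u_0)<\delta,\ \mathcal G(u_0)>0\text{ gives a solution that scatters}\bigr\},
\]
where "scatters" is quantified via the $L^8L^4$ norm (by \autoref{lemma3.3} and the following remark, scattering is equivalent to global finiteness of this norm). By \autoref{lemma3.1} and \autoref{lemma3.2}, $\mathcal E^\ast>0$. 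Arguing by contradiction, suppose $\mathcal E^\ast<\gamma(c)$ for some admissible mass level; then there is a sequence of solutions $u_n$ with $\mathcal G(u_{n}(0))>0$, $\mathcal E(u_{n}(0))\searrow\mathcal E^\ast$, and $\|u_n\|_{L^8L^4}=+\infty$. The $H^1$ boundedness of $u_n(0)$ follows from \autoref{thm:1} together with the identity $\mathcal E-\tfrac13\mathcal G=\tfrac16\mathcal T$ (so $\mathcal T(u_n(0))\le 6\mathcal E(u_n(0))$ is uniformly bounded).

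Next I would apply \autoref{thm:pd} to $\{u_n(0)\}$, obtaining profiles $\psi^j$, parameters $(t_n^j,x_n^j)$, and remainders $R_n^J$, and then pass to the nonlinear profiles $\Psi^j$ as described at the end of \autoref{profile}. Using \autoref{cor:dec} (Pythagorean expansion of the full energy, which crucially relies on \autoref{pd2} to handle the dipolar term) together with the mass expansion \eqref{mass:dec}, one shows $\sum_j \mathcal M(\psi^j)\mathcal E_{\text{free}}$-type sums are controlled, and in particular each profile satisfies $\mathcal E(\Psi^j)\le\mathcal E^\ast$ in the limit, and each has $\mathcal G(\Psi^j(0))\ge 0$; here one must also verify that the $\mathcal G>0$ condition is inherited by the profiles, using \autoref{cor:lb}/\autoref{lem:growth} and the orthogonality of the kinetic and potential pieces — the fact that $\mathcal G(u_n(0))>0$ with $\mathcal E(u_n(0))<\gamma(c)$ keeps each profile strictly below the variational threshold, so \autoref{thm:1} applies to each $\Psi^j$ and they are all global. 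If more than one profile were nontrivial, or if the single profile had energy strictly below $\mathcal E^\ast$, then by definition of $\mathcal E^\ast$ each $\Psi^j$ scatters with finite $L^8L^4$ norm; feeding the finite sum $\sum_j \Psi^j$ plus the small remainder into the perturbation lemma \autoref{lemma3} (with the Strichartz-type error controlled by \eqref{small:rem}) would force $\|u_n\|_{L^8L^4}<\infty$ for large $n$, a contradiction. Hence there is exactly one profile, $t_n^1\equiv 0$ (a quick separate argument rules out $t_n^1\to\pm\infty$, which would make $u_n(0)$ itself small in the relevant norm and scatter), the remainder vanishes, and $\mathcal E(\psi^1)=\mathcal E^\ast$. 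Setting $u_{sl,0}=\psi^1$ (possibly after the translation normalization from \autoref{thm:pd}) gives a nontrivial datum with $\mathcal E(u_{sl,0})<\gamma(c)$, $\mathcal G(u_{sl,0})>0$, whose solution $u_{sl}$ is global (\autoref{thm:1}) but has infinite $L^8L^4$ norm, hence does not scatter.

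For the precompactness-modulo-translation statement, I would take any sequence of times $\tau_m$ and apply the profile decomposition to $\{u_{sl}(\tau_m)\}$, which is bounded in $H^1$; the same minimality argument just run shows that the decomposition must again be trivial except for a single profile with $t_n\equiv 0$, i.e. $u_{sl}(\tau_m)=\tau_{x_m}\phi+o_{H^1}(1)$ for some $\phi\in H^1$ and $x_m\in\R^3$ — otherwise $u_{sl}$ would scatter forward or backward from time $\tau_m$, contradicting infinitude of the $L^8L^4$ norm on $\R$. Continuity of $x(t)$ is then a routine consequence of the $H^1$ continuity of $t\mapsto u_{sl}(t)$ and the fact that the compactness core $\phi$ is uniquely determined up to spatial translation. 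The main obstacle throughout is that the dipolar nonlinearity is neither sign-definite nor radial, so that the orthogonal energy decomposition requires the nonlocal expansion \eqref{nonlocal:dec} of \autoref{pd2}, and one cannot appeal to defocusing monotonicity: the entire argument hinges on carrying the conditions $\mathcal G>0$ and $\mathcal E<\gamma(c)$ through the profile decomposition, which is exactly what \autoref{cor:lb} and \autoref{lem:growth} make possible.
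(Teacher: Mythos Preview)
Your overall strategy matches the paper's: define a critical threshold, assume it is strictly below the variational level, extract a minimizing sequence, apply the profile decomposition with the nonlocal Pythagorean expansion of \autoref{pd2}/\autoref{cor:dec}, rule out multiple profiles via the perturbation lemma \autoref{lemma3}, and then rerun the decomposition along the orbit for precompactness. That part is fine.

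The genuine gap is in how you define and use the threshold. You set
\[
\mathcal E^\ast=\sup\bigl\{\delta>0 : \text{every }u_0\in H^1\text{ with }\mathcal E(u_0)<\delta,\ \mathcal G(u_0)>0\text{ scatters}\bigr\}
\]
with no mass constraint. Two problems follow. First, $\mathcal E^\ast>0$ is not immediate: \autoref{lemma3.2} gives scattering for $\|u_0\|_{H^1}$ small, but $\mathcal G(u_0)>0$ together with small $\mathcal E(u_0)$ only controls $\|\nabla u_0\|_{L^2}$ via $\mathcal T\le 6\mathcal E$, not the mass. Second, and more seriously, when you decompose $u_n(0)$ the profiles $\psi^j$ have masses $c^j<c$ (strictly, if there is more than one), and the comparison ``$\mathcal E(\Psi^j)<\mathcal E^\ast$'' does not by itself place them in the scattering region \emph{at their own mass}. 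Your phrase ``$\sum_j \mathcal M(\psi^j)\mathcal E_{\text{free}}$-type sums'' suggests you sense this, but it is not made precise.

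The paper resolves this by defining the threshold \emph{at fixed mass}, $\gamma_{sl}(c)$, and then exploiting the scaling $u\mapsto\mu u(\mu^2 t,\mu x)$ (which preserves both the equation and the $L^8L^4$ norm) to obtain the key relation $c_j\,\gamma_{sl}(c_j)=c\,\gamma_{sl}(c)$. Since $c_j<c$, this gives $\gamma_{sl}(c_j)>\gamma_{sl}(c)$, so each profile, having $\mathcal E(U(-t_n^j)\tau_{x_n^j}\psi^j)<\gamma_{sl}(c)<\gamma_{sl}(c_j)$, genuinely falls below the scattering threshold at its own mass. The verification that $\mathcal G>0$ is inherited by each profile is done by the rescaling argument with $v^{\mu^\star}$ from \autoref{lem:growth}, not directly via \autoref{cor:lb}. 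Once you insert this mass-scaling step, the rest of your outline (including the precompactness part) is correct and coincides with the paper's argument.
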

\begin{definition}
It is standard to refer to this solution as the \emph{minimal element} or \emph{critical solution} or \emph{soliton-like solution}. Along the remaining part of the paper we follow this conventions and we will denote it as $u_{sl},$ justifying therefore the subscripts in \autoref{lemcri}.
\end{definition}
\begin{proof}
We just sketch the proof which relies on a contradiction argument. Let us define the threshold
\begin{align*}
\gamma_{sl}(c)=\sup\left\{ \right. &\gamma>0 \textit{ such that if } u_0\in S(c)\, \textit{ with } \mathcal E(u_0)<\mathcal\gamma \textit{ and } \mathcal G(u_0)>0\\\notag
&\qquad\,\textit{then the solution to \eqref{GP} with initial data } u_0 \textit{ is in } L^8L^4\left.\right\}.
\end{align*}
Since small data scattering holds, see \autoref{lemma3.1} and \autoref{lemma3.2}, the set above is well-defined and  $\gamma_c>0.$  

The aim is to show that $\gamma_{sl}(c)=\gamma(c)$ and we suppose that this is not the case, namely we assume that $\gamma_{sl}(c)<\gamma(c).$
Consider a minimizing sequence of initial data $\{u_n(0)\}_{n\in\N}$ with $\mathcal E(u_n(0))\downarrow \gamma_{sl}$ and such that the corresponding sequence of solutions $\{u_n(t)\}_{n\in\N}$ to \eqref{GP} satisfy 
\begin{equation}\label{contra:hyp}
\limsup_{n\to\infty}\|u_n\|_{L^8L^4}\to\infty.
\end{equation}  
The sequence  $\{u_n(0)\}_{n\in\N}$ can be decomposed in
\begin{equation}\label{deco6}
u_{0,n}=\sum_{1\leq j\leq J}U(-t_n^j)\tau_{x_n^j}\psi^j+R_n^J,
\end{equation} 

\noindent by means of \autoref{thm:pd}, \autoref{pd2} and \autoref{cor:dec}. In particular,  \autoref{cor:dec} implies that, in the limit $n\to\infty,$
\begin{equation*}
\gamma_{sl}(c)=\sum_{1\leq j\leq J} \mathcal E(U(-t_n^j)\psi^j)+\mathcal E(R_n^J)
\end{equation*}
while the orthogonal expansion of the mass \eqref{mass:dec} gives, as $n\to\infty,$ (since $\|u_{0,n}\|_{L^2}^2=c$ for any $n\in\N$)
\begin{equation*}
c^j:=\|\psi^j\|_{L^2}^2\leq c.
\end{equation*}

We claim the following: there exists only one non trivial profile in the expansion \eqref{deco6}.\\

\noindent Suppose by the absurd that at least two profiles are non-trivial,  i.e. $\psi^{j_h}\neq0$ for $\{j_h\}\subseteq\{1,\dots,J\}$ and the cardinality $\#\{j_h\}\geq2.$ Let us keep the notation $\psi^j$ instead of $\psi^{j_h}.$ This implies that $c^j<c$ and $\mathcal E(U(-t_n^j)\psi^j)<\gamma_{sl}(c).$

\noindent We recall that the equation \eqref{GP} is invariant under the transformation $u\mapsto u_{\mu}:=\mu u(\mu^2 t,\mu x),$ the latter moreover leaving invariant the $L^8L^4$-norm, and we split the situation in two cases. \\

\emph{Case $1$.}  In this first situation we assume that the time translation parameter of the profile decomposition above is diverging, namely $t^j_n\to\infty$ for some $j.$  In this case we have that $\lim_{n \rightarrow \infty}\mathcal{G}(U(-t_n^j)\tau_{x_n^j}\psi^j)>0$ and $\lim_{n \rightarrow \infty}\mathcal{E}(U(-t_n^j)\tau_{x_n^j}\psi^j)>0$.
\noindent Since we have  $\mathcal{E}(U(-t_n^j)\psi^j)<\gamma_{sl}(c)$
and  the scaling of the equation guarantees that 
\begin{equation*}\label{cc}
c_j \gamma_{sl}(c_j)=c \gamma_{sl}(c) \implies  \gamma_{sl}(c_j)> \gamma_{sl}(c)
\end{equation*}
we can infer that $ \mathcal{E}(U(-t_n^j)\tau_{x_n^j}\psi^j)<\gamma_{sl}(c_j) .
$
With the nonlinear profiles constructed at the end of  \autoref{profile}, therefore mapping $(t^j_n,\psi^j)\mapsto \Psi^j_n,$ we get  $\mathcal{E}(\Psi^j)<\gamma_{sl}(c_j)$, $\Psi^j\in S(c_j)$, $\mathcal G (\Psi^j)>0$ and hence we obtain 
\begin{equation*}
\|\Psi^j\|_{L^8L^4}<+\infty.
\end{equation*}

\emph{Case $2$.} We consider  the remaining situation, namely when the time translation sequence is the trivial one.  

\noindent We argue as before using that the convergence 
\begin{equation*}U(-t_n^j)\tau_{x_n^j}\psi^j\rightarrow \tau_{x_n^j}\psi^j 
\end{equation*}
as $n\to\infty$  strongly holds in $H^1$ topology.\\

\noindent We first show  that $\mathcal{G}(U(-t_n^j)\tau_{x_n^j}\psi^j)>0$.
Notice that thanks to $\mathcal{G}(u_n)>0$
\begin{equation*} \frac{c_j}{6} \|\nabla U(-t_n^j)\tau_{x_n^j}\psi^j \|_{L^2}^2<  \frac c6 \|\nabla u_n \|_{L^2}^2<c \mathcal{E}(u_n)=c \gamma_{sl}(c)+o(1)=c_j \gamma_{sl}(c_j)+o(1).
\end{equation*}
Let us suppose $\mathcal{G}(U(-t_n^j)\tau_{x_n^j}\psi^j)<0$ and let us pick $0<\mu^{\star}\leq1$ such that
$\mathcal{G}(v^{\mu^\star})=0$
where $v=U(-t_n^j)\tau_{x_n^j}\psi^j$ and $v^{\mu}=\mu^{3/2}v(\mu x)$ (see  \autoref{lem:growth} for the properties of the functional $\mathcal G$ when evaluated on such rescaled functions). We have  
\begin{equation*}
\mathcal{E}(v^{\mu^\star})=\frac{\mu^{{\star}^2}}{6} \|\nabla U(-t_n^j)\tau_{x_n^j}\psi^j \|_{L^2}^2\leq \frac 16 \|\nabla U(-t_n^j)\tau_{x_n^j}\psi^j\|_{L^2}^2
\end{equation*}
and therefore 
\begin{equation*}
c_j\mathcal{E}(v^{\mu^\star})\leq   \frac{c_j}{6} \|\nabla U(-t_n^j)\tau_{x_n^j}\psi^j\|_{L^2}^2<c_j\gamma_{sl}(c_j)+o(1)<c_j\gamma(c_j)
\end{equation*}
which leads to the contradiction. \\

\noindent Now, recalling that $\mathcal{E}(U(-t_n^j)\tau_{x_n^j}\psi^j)>\frac 16 ||\nabla U(-t_n^j)\tau_{x_n^j}\psi^j||_{L^2}^2$ we get $\mathcal{E}(U(-t_n^j)\tau_{x_n^j}\psi^j)<\gamma_{sl}(c)<\gamma_{sl}(c_j).$
As for the previous case, we can associate to the linear profiles their corresponding nonlinear ones $\Psi^j$ having the property to belong to $S(c_j),$ satisfying $\mathcal G (\Psi^j)>0$ and  having finite $L^8L^4$-norm, hence they scatter. 

All ingredients to show that there can be only a nontrivial profile in the decomposition \eqref{deco6} are established, therefore the existence of the minimal element $u_{sl}$ as stated in \autoref{lemcri} can be proved following \cite{HR, DHR}. 

By substituting the linear profiles in \eqref{deco6} with the nonlinear ones coming from \emph{Case $1$} and \emph{Case $2$} above, we write 
\begin{equation*}
u_{0,n}=\sum_{1\leq j\leq J}\Psi^j(-t_n^j)+\rho_n^J,
\end{equation*} 
with 
\begin{equation}\label{small:rem:nonlin} 
\limsup_{n\ra\infty}\|U(t)\rho_n^J\|_{ L^8L^4}\leq\varepsilon
\end{equation}
for any $J\geq \bar J=\bar J(\varepsilon).$

The aim is now to give an approximation of $\{u_n(t)\}_{n\in\N}$ in terms of the scattering nonlinear profiles above to reach a contradiction by showing that $\{u_n(t)\}_{n\in\N}$ has uniformly bounded $L^8L^4$-norm by means of the perturbation result of \autoref{lemma3}. Therefore we define 
\begin{equation*}
w_n(t)=\sum_{1\leq j\leq J}\Psi^j(t-t^j_n,x-x^j_n)
\end{equation*}
and by the very definition of the involved term we get 
\begin{equation*}
i\partial_tw_n+\frac12\Delta w_n-\lambda_1|w_n|^2w_n-\lambda_2(K\ast|w_n|^2)w_n=\lambda_1|w_n|^2w_n+\lambda_2(K\ast|w_n|^2)w_n+e_n
\end{equation*}
with
\begin{equation*}
\begin{aligned}
e_n&=\sum_{1\leq j\leq J}\lambda_1\left(|\Psi^j(t-t^j_n,x-x^j_n)|^2\Psi^j(t-t^j_n,x-x^j_n)-|w_n|^2w_n\right)\\
&+\sum_{1\leq j\leq J}\lambda_2\left( \left(K\ast|\Psi^j(t-t^j_n,\cdot-x^j_n)|^2\right)\Psi^j(t-t^j_n,x-x^j_n)-(K\ast|w_n|^2)w_n \right).
\end{aligned}
\end{equation*}
First of all, one observes that $w_n(0)-u_n(0)=w_0-u_{0,n}=\rho^J_n$ and therefore by \eqref{small:rem:nonlin}
\begin{equation*}
\limsup_{J\to\infty}\left(\lim_{n\to\infty}\|U(t)(w_n(0)-u_{0,n})\|_{L^8L^4}\right)=0.
\end{equation*}
Moreover it can be claimed that $\|\int_0^tU(t-s)e_n(s)\,ds\|_{L^8L^4}\leq\varepsilon$  uniformly  in $n,$ for $n$ large enough depending on $\varepsilon$  and $J.$ For these details we refer the reader to \cite{HR}. All of these ingredients allow us to apply \autoref{lemma3} obtaining therefore that 
\begin{equation*}
\sup_{n\in\N}\|u_n\|_{L^8L^4}\leq C<\infty,
\end{equation*}
which is a contradiction with respect to \eqref{contra:hyp}.

We eventually arrive to the existence of only one nontrivial profile and again by proceeding in the same spirit of \cite{HR} we get the existence of a global non-scattering solution as in the statement of \autoref{lemcri}.
\end{proof}

\begin{prop}
The translation path $x(t):\R^+\mapsto\R^3$ has a sub-linear growth at infinity, namely as $t\to\infty$
\begin{equation}\label{sublin-decay}
\frac{|x(t)|}{t}=o(1).
\end{equation}
\end{prop}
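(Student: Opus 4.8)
The plan is to argue by contradiction, combining the uniform spatial localization of the minimal element with a truncated first--moment (virial--type) identity. Suppose \eqref{sublin-decay} fails: then there are $\delta_0>0$ and a sequence $t_n\to+\infty$ with $|x(t_n)|\geq\delta_0 t_n$ for all $n$, and after a space translation we may assume $x(0)=0$. I would first record two facts. The minimal element has vanishing momentum, $P(u_{sl}):=\IM\int\bar u_{sl}\nabla u_{sl}\,dx=0$; otherwise a small Galilean boost $e^{iv\cdot x}u_{sl,0}$, suitably directed, would produce a non-scattering datum with the same mass, still satisfying $\mathcal G>0$, but of strictly smaller energy than $u_{sl,0}$, contradicting the minimality built into \autoref{lemcri} (cf. \cite{DHR,KM1}). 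Moreover, by \autoref{lemcri}, for every $\varepsilon>0$ there is $R_0(\varepsilon)>0$ such that
\[
\sup_{t\geq0}\int_{|x-x(t)|\geq R_0(\varepsilon)}\big(|u_{sl}(t,x)|^2+|\nabla u_{sl}(t,x)|^2\big)\,dx\leq\varepsilon .
\]

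Next I would introduce the truncated first moment. Fix a smooth vector field $\Phi\colon\R^3\to\R^3$ with $\Phi(y)=y$ for $|y|\leq1$ and $|\Phi(y)|+|D\Phi(y)|\lesssim1$ for every $y$, put $\Phi_R(x)=R\,\Phi(x/R)$ and define $z_R(t)=\int\Phi_R(x)|u_{sl}(t,x)|^2\,dx\in\R^3$. Starting from the local conservation law $\partial_t|u_{sl}|^2+\nabla\cdot\IM(\bar u_{sl}\nabla u_{sl})=0$ --- in which both the cubic and the dipolar nonlinearity disappear, since $\lambda_1|u_{sl}|^2+\lambda_2(K\ast|u_{sl}|^2)$ is real valued, so the nonlocal term causes no genuine difficulty at this step --- one finds $\dot z_R^k(t)=\sum_j\int\partial_j\Phi_R^k(x)\,\IM(\bar u_{sl}\partial_j u_{sl})(t,x)\,dx$. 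Splitting the integral at $|x|=R$, using $\partial_j\Phi_R^k=\delta_{jk}$ on $B_R$ together with $P(u_{sl})=0$, and invoking the localization estimate, one gets that whenever $R\geq 2R_0(\varepsilon)$ and $|x(t)|\leq R/2$,
\[
|\dot z_R(t)|\lesssim\int_{|x|>R}|u_{sl}(t)||\nabla u_{sl}(t)|\,dx\lesssim\varepsilon ,\qquad\big|z_R(t)-c\,x(t)\big|\lesssim R_0(\varepsilon)+\varepsilon\big(R+|x(t)|\big),
\]
with $c=\|u_{sl}\|_{L^2}^2>0$; the second estimate comes from splitting $z_R(t)-c\,x(t)=\int\big(\Phi_R(x)-x(t)\big)|u_{sl}(t,x)|^2\,dx$ at $|x-x(t)|=R_0(\varepsilon)$.

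To conclude I would run a first--exit--time bootstrap in which the truncation radius grows linearly in time. Fix $\varepsilon$ small, depending only on the structural constants above and on $\delta_0$; for each $n$ set $R=M_0 t_n$ with $M_0$ a large fixed multiple of those constants, and let $T^\ast=\sup\{T\leq t_n:\ |x(\tau)|\leq R/2\ \text{for all }\tau\in[0,T]\}$. Integrating $\dot z_R$ on $[0,t]$ for $t\leq T^\ast$, and using $z_R(0)=c\,x(0)+O(R_0(\varepsilon))=O(R_0(\varepsilon))$ together with the two bounds above, one obtains $|x(t)|\lesssim R_0(\varepsilon)+\varepsilon R+\varepsilon t$ for all $t\leq T^\ast$. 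If $T^\ast<t_n$, then $|x(T^\ast)|=R/2$ would force $R\lesssim R_0(\varepsilon)+\varepsilon R+\varepsilon t_n$, which is impossible for $n$ large once $M_0$ is chosen big and $\varepsilon$ small; hence $T^\ast=t_n$, and then $|x(t_n)|\lesssim R_0(\varepsilon)+\varepsilon(M_0+1)t_n<\tfrac{\delta_0}{2}\,t_n$ for $n$ large, contradicting $|x(t_n)|\geq\delta_0 t_n$. I expect the main obstacle to be precisely this interplay between the truncation radius $R$ --- which must dominate $|x(t)|$ for $z_R(t)$ to track $c\,x(t)$ --- and the time variable, handled by letting $R$ grow proportionally to $t$ and by controlling the first exit time. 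By contrast, the nonlocal dipolar nonlinearity, being gauge invariant, is absent from the continuity equation and so plays no role in this argument.
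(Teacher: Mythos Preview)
Your proposal is correct and follows essentially the same approach as the paper: the paper's proof consists of showing $P(u_{sl})=0$ via a Galilean--boost/minimality argument and then simply citing \cite{DHR} for the remaining truncated first--moment computation and bootstrap that yield the sublinear growth. You have carried out both steps explicitly (and your use of a \emph{small} boost, which keeps $\mathcal G>0$ by continuity, is a minor variant of the paper's choice $v=-P(u_{sl})/\mathcal M(u_{sl})$), so your argument is in fact more detailed than the paper's own proof.
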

\begin{proof}
The proof of this spatial control is contained in \cite{DHR}, once one notices that also for \eqref{GP} the momentum of the critical solution $u_{sl}$ given in \autoref{lemcri} is zero, i.e. $P(u_{sl})=\Im\{ \int_{\mathbb R^3} \bar u_{sl}\nabla u_{sl}\,dx\}=0.$ In fact, if we consider the Galilean transformation of $u_{sl}$ given by
\begin{equation*}
T_v(u_{sl}(x,t))=e^{i(v \cdot x -|v|^2t)}u_{sl}(x-2vt, t),\qquad v\in\R^3
\end{equation*}
and we assume that $P(u_{sl})\neq0,$ then by selecting 
\begin{equation*}
v=-\frac{P(u_{sl})}{\mathcal{M}(u_{sl})}
\end{equation*}
we will have $P(T_{v}(u_{sl}))=0$ and $\mathcal{E}(T_{v}(u_{sl}))< \mathcal{E}(u_{sl})<\gamma(c);$ moreover the function  
\begin{equation*}
\mu \mapsto \mathcal{E}(T_{\mu  v}(u_{sl}))
\end{equation*}
is decreasing for $0<\mu<1$. Let us suppose therefore that $\mathcal{G}(T_{ v}(u_{sl}))<0;$ by continuity there exists $0<\bar\mu<1$ such that 
\begin{equation*}\mathcal{G}(T_{\bar\mu v}(u_{sl}))=0, \quad \mathcal{E}(T_{\bar\mu v}(u_{sl}))< \mathcal{E}(u_{sl})<\gamma(c)
\end{equation*}
which is impossible, therefore $\mathcal{G}(T_{ v}(u_{sl}))>0$. But this implies that $T_{ v}(u_{sl})$ does scatter and therefore $u_{sl}$ cannot be the minimal element.
\end{proof}
Finally, we state the uniform localization  of  $\left\{ u_{sl}(t,x-x(t)),\ t\geq0\right\},$ which is a standard consequence of the precompacteness property of the minimal element. 
\begin{prop}\label{prop:5.4} For any $\varepsilon>0$ there exists a radius $\rho=\rho(\varepsilon)>0$ such that 
\begin{equation*}
\begin{aligned}
\int_{|x+x(t)|>\rho}|u_{sl}(t)|^2+|\nabla u_{sl}(t)|^2+\left|\lambda_1|u_{sl}|^4+\lambda_2(K\ast|u_{sl}(t)|^2)|u_{sl}(t)|^2\right|\,dx\\
\leq C(\lambda_1,\lambda_2)\int_{|x+x(t)|>\rho}|u_{sl}(t)|^2+|\nabla u_{sl}(t)|^2+|u_{sl}(t)|^4\,dx<\varepsilon,\qquad \forall\,t\geq0.
\end{aligned}
\end{equation*}
\end{prop}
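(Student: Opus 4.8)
The plan is to deduce everything from the precompactness of the set $\mathcal K:=\{w_t:=u_{sl}(t,\cdot-x(t)),\ t\ge0\}\subset H^1(\R^3)$ provided by \autoref{lemcri}. The first move is a change of variables: setting $a=x+x(t)$ and using that both $\nabla$ and the convolution by $K$ commute with translations (recall $K\ast\tau_z f=\tau_z(K\ast f)$), each of the four integrals over $\{|x+x(t)|>\rho\}$ in the statement turns into, respectively, the integral of $|w_t|^2$, $|\nabla w_t|^2$, $|w_t|^4$ and $(K\ast|w_t|^2)|w_t|^2$ over the fixed exterior region $\{|a|>\rho\}$. Thus the whole proposition reduces to showing that these four tails over $\{|a|>\rho\}$ vanish, uniformly in $t\ge0$, as $\rho\to\infty$.

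For the first (constant) inequality I would first observe that $\mathcal K$ is bounded in $H^1$, whence $M:=\sup_{t\ge0}\|u_{sl}(t)\|_{L^4}<\infty$ by Sobolev embedding, and $\|u_{sl}(t)\|_{L^2}$ is constant by mass conservation. Since $f\mapsto K\ast f$ is bounded on $L^2(\R^3)$ (the continuity property invoked in the proof of \autoref{pd2}), Cauchy--Schwarz on $\Omega_t:=\{|x+x(t)|>\rho\}$ gives
\[
\int_{\Omega_t}\bigl|(K\ast|u_{sl}(t)|^2)\,|u_{sl}(t)|^2\bigr|\,dx\le\|K\ast|u_{sl}(t)|^2\|_{L^2(\R^3)}\,\|u_{sl}(t)\|_{L^4(\Omega_t)}^2\lesssim M^2\,\|u_{sl}(t)\|_{L^4(\Omega_t)}^2 ,
\]
which, together with the trivial bound on $\int_{\Omega_t}|\lambda_1|\,|u_{sl}|^4$, yields the first inequality with $C=C(\lambda_1,\lambda_2,M)$; in particular it reduces the dipolar tail to the purely local $L^4$ tail $\|u_{sl}(t)\|_{L^4(\Omega_t)}$.

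It remains to prove that the $H^1$ and $L^4$ tails of $\mathcal K$ over $\{|a|>\rho\}$ are uniformly small, which is the classical fact that a precompact subset of $H^1(\R^3)$ is tight. Given $\eta>0$, cover $\mathcal K$ by finitely many balls $B_{H^1}(f_i,\eta)$, $i=1,\dots,N$; each $f_i\in H^1$ satisfies $\int_{|a|>\rho}(|f_i|^2+|\nabla f_i|^2)\to0$ by dominated convergence, so for $\rho$ large this is $<\eta^2$ for every $i$, and then for any $t$ one chooses $i$ with $\|w_t-f_i\|_{H^1}<\eta$ and concludes $\int_{|a|>\rho}(|w_t|^2+|\nabla w_t|^2)\lesssim\eta^2$. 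The $L^4$ tail follows either by the same argument (the embedding $H^1\hookrightarrow L^4$ makes $\mathcal K$ precompact, hence tight, in $L^4$ as well) or, more explicitly, by Gagliardo--Nirenberg applied to $\chi_\rho w_t$, with $\chi_\rho\in C^\infty$ vanishing on $\{|a|\le\rho\}$, equal to $1$ on $\{|a|\ge2\rho\}$ and satisfying $|\nabla\chi_\rho|\lesssim\rho^{-1}$:
\[
\|w_t\|_{L^4(|a|\ge2\rho)}^4\lesssim\|\chi_\rho w_t\|_{L^2}\,\|\nabla(\chi_\rho w_t)\|_{L^2}^3\lesssim\|w_t\|_{L^2(|a|>\rho)}\bigl(\|\nabla w_t\|_{L^2(|a|>\rho)}+\rho^{-1}\|w_t\|_{L^2}\bigr)^3 ,
\]
which is small once the $H^1$ tail is small while $\|w_t\|_{H^1}$ stays bounded. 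Combining the three tail bounds with the first inequality and choosing $\rho$ large enough depending on $\varepsilon,\lambda_1,\lambda_2,M$ concludes the argument.

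I do not expect a genuine obstacle; the only delicate point is the dipolar tail, since $K\notin L^1_{\mathrm{loc}}(\R^3)$ rules out any pointwise bound on $K\ast|u_{sl}|^2$ or any localization of the convolution, so the estimate must necessarily pass through the \emph{global} $L^2\to L^2$ boundedness of $f\mapsto K\ast f$ together with the uniform $L^4$ control from precompactness; everything else is the standard tightness mechanism for precompact families in $H^1$.
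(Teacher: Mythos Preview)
Your proposal is correct and matches the paper's approach: the paper itself gives no detailed argument and simply states that the result ``is a standard consequence of the precompactness property of the minimal element,'' which is exactly the tightness mechanism you carry out.

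One small remark: your Cauchy--Schwarz bound gives $\int_{\Omega_t}\bigl|(K\ast|u_{sl}|^2)|u_{sl}|^2\bigr|\lesssim M^2\|u_{sl}\|_{L^4(\Omega_t)}^2$, which controls the dipolar tail by the \emph{square} of the local $L^4$ tail rather than by $\|u_{sl}\|_{L^4(\Omega_t)}^4$; so it does not literally yield the intermediate inequality in the form written (that inequality, with a constant depending only on $\lambda_1,\lambda_2$ and with $|u_{sl}|^4$ on the right, cannot hold for a nonlocal operator when the tail is small). But you are aware of this---you immediately note that the point is the reduction to the local $L^4$ tail---and your bound is precisely what is needed for the final $<\varepsilon$ conclusion, which is the only part used downstream. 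The paper's intermediate display is to be read with the constant implicitly depending on the uniform $H^1$ bound of $u_{sl}$, and with ``$|u_{sl}|^4$'' on the right understood as a proxy for the $L^4$ tail; your argument makes this precise.
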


\section{Extinction of the Minimal Element}

This section is devoted to the conclusion of the proof of \autoref{main} with the so-called rigidity part of the Kenig and Merle road map. The minimal global non-scattering solution built in \autoref{lemcri} can be only the trivial one, obtaining therefore a contradiction  with respect to the fact that its $L^8L^4$-norm is not finite. It is based on a convexity argument on the localized variance of the minimal element.
For the infinite-variance NLS equation, this method of considering a localized version of the variance was pioneered by Ogawa and Tsutsumi, see \cite{OT}, in order to show finite time singularity formation as an extension of the result by Glassey in a framework with finite variance, see \cite{Gla}.
\subsection{Localized Virial Identities} To lighten the notation, since now on we  simply write $u$ instead of $u_{sl}.$
Define $z_R(t)=R^2\int \chi\left(\frac xR\right)|u(t,x)|^2\,dx$ where $\chi\in\mathcal C_c^\infty(\R^3)$ is a cut-off function. Standard computations yield  
\begin{equation}\label{virial1}
\begin{aligned}
\frac{d}{dt}z_R(t)&=2\Im\left\{R\int\nabla\chi\left(\frac xR\right)\cdot\nabla u\bar u\,dx\right\}
\end{aligned}
\end{equation}
and to the immediate estimate 
\begin{equation*}
\left|\frac{d}{dt}z_R(t)\right|\lesssim R\|u\|_{L^2}\|\nabla u\|_{L^2}.
\end{equation*}
By using \eqref{virial1} and the equation solved by $u,$ i.e. \eqref{GP}, we have 
\begin{equation*}
\begin{aligned}
\frac{d^2}{dt^2}z_R(t)&=2\int\left(\nabla^2\chi\left(\frac xR\right)\nabla u\right)\cdot\nabla\bar u\,dx\\
&-\frac{1}{2R^2}\int\Delta^2\chi\left(\frac xR\right)|u|^2\,dx\\
&+\lambda_1\int\Delta\chi\left(\frac xR\right)|u|^4\,dx\\
&-2\lambda_2R\int\nabla\chi\left(\frac xR\right)\cdot\nabla\left(K\ast|u|^2\right)|u|^2\,dx.
\end{aligned}
\end{equation*}

\noindent If we choose $\chi(x)=|x|^2$ on $|x|\leq1$ and $supp\,(\chi)\subset B(0,2),$ then by direct computations we get 
\begin{equation}\label{eq:directcom}
\begin{aligned}
\frac{d^2}{dt^2}z_R(t)=\mathcal A-2\lambda_2R\mathcal B
\end{aligned}
\end{equation}
where 
\begin{equation*}
\begin{aligned}
\mathcal A&=4\int_{|x|\leq R}|\nabla u|^2\,dx+2\int_{R\leq |x|\leq 2R}\left(\nabla^2\chi\left(\frac xR\right)\nabla u\right)\cdot\nabla\bar u\,dx\\
&-\frac{1}{2R^2}\int_{R\leq|x|\leq2R}\Delta^2\chi\left(\frac xR\right)|u|^2\,dx\\
&+6\lambda_1\int_{|x|\leq R}|u|^4\,dx+\lambda_1\int_{R\leq|x|\leq2R}\Delta\chi\left(\frac xR\right)|u|^4\,dx
\end{aligned}
\end{equation*}
\noindent and
\begin{equation}\label{term:B}
\mathcal B=\mathcal B(|u|^2):=\int\nabla\chi\left(\frac xR\right)\cdot\nabla\left(K\ast|u|^2\right)|u|^2\,dx.
\end{equation}
It is trivial to estimate $\mathcal A$ as
\begin{equation}\label{eq:stimbas}
\mathcal A\geq 4\int |\nabla u|^2\,dx+6\lambda_1\int |u|^4\,dx -\varepsilon_1(R)
\end{equation}
where \begin{equation*}
\varepsilon_1(R) = C\left(\int_{|x|\geq R}|\nabla u|^2+R^{-2}|u|^2+|u|^4\,dx\right).
\end{equation*}
\\

Now  we focus on the more delicate term $\mathcal B.$
\\

Let us consider the second term appearing in the localized virial identity above, i.e.  $\mathcal B$ defined in \eqref{term:B}. Before starting with the analysis we recall some preliminary tools introduced by Lu and Wu in \cite{LW}, where the authors study the Davey-Stewartson equation. The nonlocal nonlinearity in that case is given by a convolution with a kernel having symbol $\frac{\xi_1^2}{|\xi|^2}$ instead of  dipolar kernel.

Let $\mathcal R_j f=\mathcal F^{-1}\left(-i\frac{\xi_j}{|\xi|}\hat f\right)$ the Riesz transform of $f,$ defined via the zero-order symbol $-i\frac{\xi_j}{|\xi|}.$ It is well-known that it maps $L^p$ into itself for any $p\in(1,+\infty).$ One recognizes that the symbol $-\frac{\xi_j^2}{|\xi|^2}$ is the one defining $\mathcal R_j^2.$ By recalling the expression of the dipolar kernel $K$ in Fourier variables \eqref{kernel:fou}, we get
\begin{equation*}
\begin{aligned}
\mathcal F\left(K\ast f\right)&=(\hat K \hat f)=\frac{4\pi}{3}\left(\frac{2\xi_3^2}{|\xi|^2}-\frac{\xi_2^2}{|\xi|^2}-\frac{\xi_1^2}{|\xi|^2}\right) \hat f\\
&=-\frac{4\pi}{3}\left(2\mathcal F(\mathcal R_3^2f)-\mathcal F(\mathcal R_2^2f)-\mathcal F(\mathcal R_1^2f)\right),
\end{aligned}
\end{equation*}
and therefore 
\begin{equation*}
\left(K\ast f \right) = -\frac{8\pi}{3}\mathcal R_3^2f+\frac{4\pi}{3}\mathcal R_2^2f+\frac{4\pi}{3}\mathcal R_1^2f.
\end{equation*}

After these considerations, we report the point-wise estimates contained in \cite{LW}.
\begin{lemma}{\cite[Lemma 5.1]{LW}} Let $f$ be  a smooth function,
and $\eta$ a  compactly supported function on $B(0,1)$ with $|\eta|\leq1.$  Then 
\begin{equation}\label{eq:pointwise1}
\left|\eta\left(\frac xR\right)\mathcal R^2_j\left(\left(1-\eta\left(\frac{\cdot}{4R}\right)\right)f\right)(x)\right|\lesssim \eta\left(\frac xR\right) R^{-3}\|f\|_{L^1(B(0,4R)^c)}
\end{equation}
and 
\begin{equation}\label{eq:pointwise2}
\left| \left(1-\eta\left(\frac{x}{4R}\right)\right)\mathcal R^2_j\left(\eta\left(\frac \cdot R\right)f\right)(x)\right|\lesssim \left(1-\eta\left(\frac{x}{4R}\right)\right) R^{-3}\|f\|_{L^1(B(0,2R))}.
\end{equation}
\end{lemma}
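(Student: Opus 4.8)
The plan is to reduce both pointwise bounds to the elementary fact that convolution against a kernel of size $|x-y|^{-3}$ becomes an absolutely convergent integral, with an explicit gain of $R^{-3}$, as soon as the two functions involved have supports separated by a distance $\gtrsim R$.

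First I would isolate the local part of $\mathcal R_j^2$. Its symbol $-\xi_j^2/|\xi|^2$ is homogeneous of degree zero and smooth away from the origin, and $\xi_j^2/|\xi|^2$ has average $\tfrac13$ on the unit sphere; hence
\[
-\frac{\xi_j^2}{|\xi|^2}=-\frac13-m_j(\xi),\qquad m_j(\xi)=\frac{\xi_j^2}{|\xi|^2}-\frac13,\qquad \mathcal R_j^2=-\frac13\,\mathrm{Id}-T_{m_j},
\]
where $T_{m_j}$ is the singular integral operator whose convolution kernel $k_j$ is homogeneous of degree $-3$, bounded and mean-zero on the sphere. The only properties of $T_{m_j}$ I will use are the size bound $|k_j(x)|\lesssim|x|^{-3}$ for $x\neq0$ and the fact that $T_{m_j}g(x)=\int k_j(x-y)g(y)\,dy$, as an absolutely convergent integral and with no principal value, whenever $g$ vanishes in a neighbourhood of $x$.

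Next I would argue by supports, fixing a radial cutoff $\eta$ with $0\leq\eta\leq1$, $\eta\equiv1$ on $B(0,1)$ and $\mathrm{supp}\,\eta\subset B(0,2)$ (as is the case in the applications). For \eqref{eq:pointwise1}: on $\mathrm{supp}\,\eta(\cdot/R)$ one has $|x|<2R$, while $\eta\equiv1$ near $0$ makes $(1-\eta(\cdot/4R))f$ vanish on $B(0,4R)$ and bounded by $2|f|$ outside. Therefore the $-\tfrac13\,\mathrm{Id}$ contribution $-\tfrac13\,\eta(x/R)(1-\eta(x/4R))f(x)$ is identically zero, since $\eta(x/R)\neq0$ forces $x\in B(0,4R)$; and for the $T_{m_j}$ contribution, $\eta(x/R)\neq0$ together with $|y|\geq4R$ gives $|x-y|\gtrsim R$, so
\[
\Bigl|\eta\bigl(\tfrac xR\bigr)\,T_{m_j}\bigl((1-\eta(\cdot/4R))f\bigr)(x)\Bigr|\;\lesssim\;\eta\bigl(\tfrac xR\bigr)\int_{|y|\geq4R}\frac{|f(y)|}{|x-y|^{3}}\,dy\;\lesssim\;\eta\bigl(\tfrac xR\bigr)\,R^{-3}\|f\|_{L^1(B(0,4R)^c)}.
\]
Estimate \eqref{eq:pointwise2} follows by exchanging the roles of the two cutoffs: $\eta(\cdot/R)f$ is supported in $B(0,2R)$, while $1-\eta(x/4R)\neq0$ forces $|x|\geq4R$, so on the relevant set again $|x-y|\gtrsim R$, the outer factor $\eta(x/R)$ vanishes (killing the $-\tfrac13\,\mathrm{Id}$ term), and $|k_j(x-y)|\lesssim R^{-3}$ yields $R^{-3}\|f\|_{L^1(B(0,2R))}$, the prefactor $1-\eta(x/4R)$ being carried along as a nonnegative factor.

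The only subtle point is that the principal-value nature of $T_{m_j}$ must be seen to be irrelevant here: this works precisely because the two scales $R$ and $4R$ in the cutoffs force a separation of order $R$ between the support of the input and the support of the outer cutoff, so the diagonal singularity of $k_j$ is never reached, the integral is absolutely convergent, and no cancellation property of $k_j$ is actually needed — only its size. Everything else is bookkeeping on $\mathrm{supp}\,\eta$ and the scale separation it provides.
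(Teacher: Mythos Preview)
The paper does not supply a proof of this lemma; it is quoted from \cite[Lemma~5.1]{LW} and used as a black box in the virial computation. Your argument is correct and is the natural one: after subtracting the spherical average $-\tfrac13$ from the symbol $-\xi_j^2/|\xi|^2$, the remainder is a Calder\'on--Zygmund operator with kernel $|k_j(x)|\lesssim|x|^{-3}$, and the scale separation built into the pair of cutoffs $\eta(\cdot/R)$ and $1-\eta(\cdot/4R)$ forces $|x-y|\gtrsim R$ on the relevant supports, so the convolution is absolutely convergent with the claimed $R^{-3}$ gain and the identity part vanishes.

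One small remark: the extra hypothesis you impose, $\eta\equiv1$ on $B(0,1)$, is not written in the statement as quoted but is clearly needed for $(1-\eta(\cdot/4R))f$ to be supported in $B(0,4R)^c$ (otherwise the $L^1$ norm on the right-hand side would not control the input). You flag this correctly, and it is satisfied by the indicator-type cutoffs actually used in the paper's applications of the lemma.
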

The above lemma is used to show a suitable estimate for the term $\mathcal B,$ yielding  the following.
\begin{lemma}\label{lemma:6.2} The term $\mathcal B$  satisfies
\begin{equation}\label{eq:varr}
-2\lambda_2R\mathcal B(|u|^2)\geq 6\lambda_2\int(K\ast|u|^2)|u|^2\,dx-\varepsilon_2(R)
\end{equation}
where \begin{equation*}
\varepsilon_2(R)= C\left(\int_{|x|\geq r}|\nabla u|^2+R^{-2}|u|^2+|u|^4\,dx\right),\qquad r\sim R.
\end{equation*}
\end{lemma}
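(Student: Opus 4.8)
\emph{Proof plan.} The goal is the sharper \emph{identity}
\begin{equation*}
R\,\mathcal B(|u|^2)=-3\int(K\ast|u|^2)|u|^2\,dx+E(R),\qquad
|E(R)|\le C\!\int_{|x|\ge r}\!\big(|\nabla u|^2+R^{-2}|u|^2+|u|^4\big)\,dx,\quad r\sim R,
\end{equation*}
since multiplying by $-2\lambda_2$ and absorbing the factor $2|\lambda_2|$ into $C$ gives \eqref{eq:varr} for either sign of $\lambda_2$. Throughout set $f=|u|^2$ and $\Phi_R(x):=R(\nabla\chi)(x/R)=\nabla\!\big(R^2\chi(x/R)\big)$, so that $\Phi_R(x)=2x$ on $\{|x|\le R\}$, $\operatorname{supp}\Phi_R\subset B_{2R}$, $|\Phi_R|\lesssim R$, $|\nabla\Phi_R|\lesssim1$, and $\operatorname{div}\Phi_R=(\Delta\chi)(x/R)$, which equals $6$ on $\{|x|\le R\}$; with this notation $R\,\mathcal B=\int\Phi_R\cdot\nabla(K\ast f)\,f\,dx$. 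Recall also that along the flow $\|f\|_{L^1}=\|u\|_{L^2}^2$ and $\|\nabla f\|_{L^1}\lesssim\|u\|_{L^2}\|\nabla u\|_{L^2}$ are bounded.

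\emph{Step 1: reduction to a single Riesz square.} Since $\sum_{j=1}^3\mathcal R_j^2=-\mathrm{Id}$, the representation $K\ast f=-\tfrac{8\pi}{3}\mathcal R_3^2f+\tfrac{4\pi}{3}\mathcal R_2^2 f+\tfrac{4\pi}{3}\mathcal R_1^2 f$ recalled above collapses to $K\ast f=-\tfrac{4\pi}{3}f-4\pi\,\mathcal R_3^2 f$, hence
\begin{equation*}
R\,\mathcal B=-\tfrac{4\pi}{3}\int\Phi_R\cdot\nabla f\,f\,dx-4\pi\int\Phi_R\cdot\nabla(\mathcal R_3^2 f)\,f\,dx=:\mathrm{(I)}+\mathrm{(II)}.
\end{equation*}
Term $\mathrm{(I)}$ is purely \emph{local}: $\Phi_R\cdot\nabla f\,f=\tfrac12\Phi_R\cdot\nabla(f^2)$, so after one integration by parts $\mathrm{(I)}=\tfrac{2\pi}{3}\int(\Delta\chi)(x/R)\,|u|^4\,dx=4\pi\int|u|^4\,dx+O\!\big(\int_{|x|\ge R}|u|^4\big)$, exactly as the cubic term is treated in \eqref{eq:stimbas}. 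Since $-3\int(K\ast f)f=4\pi\int|u|^4+12\pi\int(\mathcal R_3^2 f)f$, it remains to prove $\mathrm{(II)}=12\pi\int(\mathcal R_3^2 f)f+E(R)$, i.e.
\begin{equation*}
\int\Phi_R\cdot\nabla(\mathcal R_3^2 f)\,f\,dx=-3\int(\mathcal R_3^2 f)f\,dx+E(R).
\end{equation*}

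\emph{Step 2: unlocalised virial identity and its localisation.} As the symbol $-\xi_3^2/|\xi|^2$ is homogeneous of degree $0$, $\mathcal R_3^2$ commutes with dilations, i.e.\ $x\cdot\nabla(\mathcal R_3^2 g)=\mathcal R_3^2(x\cdot\nabla g)$; together with the self‑adjointness of $\mathcal R_3^2$ this gives $\int x\cdot\nabla(\mathcal R_3^2 f)\,f=\int(x\cdot\nabla f)\,\mathcal R_3^2 f$, and summing the two and integrating by parts $\int x\cdot\nabla\!\big(f\,\mathcal R_3^2 f\big)=-3\int f\,\mathcal R_3^2 f$ yields $\int 2x\cdot\nabla(\mathcal R_3^2 f)\,f=-3\int(\mathcal R_3^2 f)f$. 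As $\Phi_R=2x$ on $\{|x|\le R\}$, the claim for $\mathrm{(II)}$ follows once we control the error coming from the mismatch $\Phi_R-2x$ (supported in $\{R\le|x|\le2R\}$, of size $\lesssim R$), from $\operatorname{div}\Phi_R-6$, and from the truncation of the weight to $B_{2R}$. We split $f=f_{<}+f_{>}$, $f_{<}=\theta(\cdot/R)f$ with a standard cut‑off $\theta$, so that $f_{<}$ lives in $\{|x|\le R\}$ and $f_{>}$ in $\{|x|\gtrsim R\}$, and decompose $\mathcal R_3^2 f$ accordingly. The contributions in which input and output of $\mathcal R_3^2$ are separated at scale $R$ — $\mathcal R_3^2 f_{<}$ evaluated where $\Phi_R-2x$ and $\operatorname{div}\Phi_R-6$ live, and $\mathcal R_3^2 f_{>}$ evaluated on $B_{2R}$ — are estimated by the point‑wise bounds \eqref{eq:pointwise1}–\eqref{eq:pointwise2} of Lu–Wu: each produces a gain $R^{-3}\|f\|_{L^1}$ or $R^{-3}\|\nabla f\|_{L^1}$ which, multiplied by the weight and paired (via Hölder, a localised Gagliardo–Nirenberg inequality and the $L^p$–boundedness of $\mathcal R_3$) with tails of $|u|^2$, $|\nabla u|^2$, $|u|^4$ over $\{|x|\gtrsim R\}$, is of the size allowed in $E(R)$. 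On the complementary near‑diagonal piece (input and output both in $\{|x|\le R\}$, where $\Phi_R=2x$) one runs the dilation–commutation computation above, the commutators of $\mathcal R_3^2$ with the truncated weight being controlled by $L^p$–continuity of $\mathcal R_3^2$ and by the vanishing of the spherical average of the angular part of $K$ (so no logarithmic loss appears).

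\emph{The main obstacle} is exactly this near‑diagonal region. One \emph{cannot} bound $|x\cdot\nabla K(x)|$ by $3|K(x)|$ and then the resulting double integral by $\iint|x-y|^{-3}|u(x)|^2|u(y)|^2\,dx\,dy$, which is infinite because $|x|^{-3}$ is not locally integrable in $\R^3$ — this is the difficulty flagged in the Introduction. The remedy is precisely the rewriting $K\ast=-\tfrac{4\pi}{3}\mathrm{Id}-4\pi\,\mathcal R_3^2$ of Step 1, which displays the singular part of the interaction as a Calderón–Zygmund operator: its principal value near the diagonal is then absorbed by the $L^p$‑continuity of $\mathcal R_3^2$ and by \eqref{eq:pointwise1}–\eqref{eq:pointwise2}, while only the genuinely off‑diagonal part is estimated by size. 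Collecting $\mathrm{(I)}$ and $\mathrm{(II)}$ gives the identity above, and hence \eqref{eq:varr}.
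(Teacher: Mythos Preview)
Your reduction $K\ast f=-\tfrac{4\pi}{3}f-4\pi\,\mathcal R_3^2 f$ via $\sum_j\mathcal R_j^2=-\mathrm{Id}$ is a genuine simplification, and the unlocalised identity $\int 2x\cdot\nabla(\mathcal R_3^2 f)\,f=-3\int(\mathcal R_3^2 f)f$ from dilation-commutation is correct. The overall architecture is therefore close to the paper's. But the localisation you sketch in Step~2 has a real gap.

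With a single cut-off at scale $R$ (so $f_<$ lives in $B_R$ and $f_>$ in $\{|x|\gtrsim R/2\}$), the cross pieces you call ``separated at scale $R$'' are \emph{not} separated in the sense required by the Lu--Wu bounds \eqref{eq:pointwise1}--\eqref{eq:pointwise2}: those estimates need a factor-of-four gap between the support of the input of $\mathcal R_j^2$ and the region where you evaluate the output, whereas $\mathrm{supp}\,f_>$ and $B_{2R}$ (and likewise $\mathrm{supp}\,f_<$ and $\{|x|\ge R\}$) overlap. If instead you try to control these cross terms by $L^p$-boundedness alone after one integration by parts, you pick up a factor $|\Phi_R|\lesssim R$ against $\nabla f_<$ or $\nabla f_>$, and there is no compensating $R^{-1}$ --- the resulting bound is not of the form $\varepsilon_2(R)$. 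The paper avoids this by a \emph{two-scale} localisation: first $u=v_1+v_2$ at scale $10R$ (so $v_2$ is genuinely far from $\mathrm{supp}\,\Phi_R\subset B_{2R}$, and $\mathcal B_1$ is handled by \eqref{eq:pointwise1}), then $v_1=w_1+w_2$ at scale $R/10$ (so $w_2$ is far from $\mathrm{supp}\,\nabla\tilde\chi_R\subset\{|x|\ge R\}$, and $\mathcal B_{0,1}''$ is handled by \eqref{eq:pointwise2}). What remains is $\mathcal B_{0,1}'$, which involves only $w_1$ supported in the annulus $R/10\le|x|\le 10R$, hence already a tail in $L^4$.

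Even after that refinement there is a last term --- your ``commutators of $\mathcal R_3^2$ with the truncated weight'' --- for which ``$L^p$-continuity'' is not an argument: the weight is unbounded, so $[\mathcal R_3^2,\Phi_R]$ is not a priori $L^p$-bounded. The paper's concrete device is to pass to the Fourier side and use the Lipschitz bound $\bigl|\tfrac{\eta_j\eta}{|\eta|}-\tfrac{\xi_j\xi}{|\xi|}\bigr|\lesssim|\eta-\xi|$ for the degree-one symbol $\xi_j\xi/|\xi|$; this converts the commutator into convolution with $|\xi|^2|\widehat{\chi_R}|$ (and with $|\xi|^2|\widehat{|x|^2}|=6\delta_0$), both of which are harmless and yield $\|u\|_{L^4(\{|x|\ge R/10\})}^2$. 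That estimate, not generic Calder\'on--Zygmund theory, is the missing ingredient in your sketch.
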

The above lemma in conjunction with \eqref{eq:directcom} and \eqref{eq:stimbas} gives the following estimate.
\begin{prop}\label{prop:6.3} The localized variance satisfies 
\begin{equation*}
\frac{d^2}{dt^2} z_R(t)\geq4\mathcal G(u)-\varepsilon_1(R)-\varepsilon_2(R).
\end{equation*}
\end{prop}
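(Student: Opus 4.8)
The plan is to assemble the pieces that are already in place; the present proposition is essentially the accounting step of the virial computation. I would start from the exact identity \eqref{eq:directcom}, $\frac{d^2}{dt^2}z_R(t)=\mathcal A-2\lambda_2 R\,\mathcal B$, and feed into it the two lower bounds at our disposal: the elementary estimate \eqref{eq:stimbas} for the local part,
\[
\mathcal A\geq 4\int|\nabla u|^2\,dx+6\lambda_1\int|u|^4\,dx-\varepsilon_1(R),
\]
and the estimate \eqref{eq:varr} of \autoref{lemma:6.2} for the nonlocal part,
\[
-2\lambda_2 R\,\mathcal B(|u|^2)\geq 6\lambda_2\int(K\ast|u|^2)|u|^2\,dx-\varepsilon_2(R).
\]
Adding the two inequalities yields
\[
\frac{d^2}{dt^2}z_R(t)\geq 4\int|\nabla u|^2\,dx+6\lambda_1\int|u|^4\,dx+6\lambda_2\int(K\ast|u|^2)|u|^2\,dx-\varepsilon_1(R)-\varepsilon_2(R),
\]
which is legitimate since $\varepsilon_1(R),\varepsilon_2(R)\geq 0$ and both are of the form $C\big(\int_{|x|\gtrsim R}|\nabla u|^2+R^{-2}|u|^2+|u|^4\,dx\big)$, so they may simply be kept additively on the right-hand side.

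The second (and only genuinely algebraic) step is to recognize the main term as $4\mathcal G(u)$. By Plancherel's identity, the potential energy \eqref{potential:en} in physical variables reads $\mathcal P(u)=\lambda_1\int|u|^4\,dx+\lambda_2\int(K\ast|u|^2)|u|^2\,dx$; hence, recalling \eqref{kinetic:en} and the definition \eqref{GG},
\[
4\mathcal G(u)=4\mathcal T(u)+6\mathcal P(u)=4\int|\nabla u|^2\,dx+6\lambda_1\int|u|^4\,dx+6\lambda_2\int(K\ast|u|^2)|u|^2\,dx,
\]
which is exactly the main term produced above. Substituting gives the claimed inequality $\frac{d^2}{dt^2}z_R(t)\geq 4\mathcal G(u)-\varepsilon_1(R)-\varepsilon_2(R)$.

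As for the difficulty: there is essentially no obstacle left in this proposition itself, because all the delicate work has already been absorbed into \autoref{lemma:6.2}. That is where the real content lies — controlling the nonlocal term $\mathcal B$ by splitting it into a near-origin piece, where one exploits the homogeneity identity $x\cdot\nabla K(x)=-3K(x)$ to reconstruct $6\lambda_2\int(K\ast|u|^2)|u|^2$, and a far-field piece, estimated via the pointwise Riesz-transform bounds \eqref{eq:pointwise1}–\eqref{eq:pointwise2}, all while avoiding the forbidden brutal bound $|x\cdot\nabla K(x)|=3|K(x)|$ that is unavailable because of the non-integrable singularity of $K$ at the origin. Granting \autoref{lemma:6.2}, the proof of \autoref{prop:6.3} is the straightforward combination described above.
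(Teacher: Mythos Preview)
Your proposal is correct and follows exactly the paper's approach: the paper states \autoref{prop:6.3} simply as the consequence of \autoref{lemma:6.2} combined with \eqref{eq:directcom} and \eqref{eq:stimbas}, and you have spelled out precisely that combination, including the identification of the main term with $4\mathcal G(u)$.
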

\begin{proof}[Proof of \autoref{lemma:6.2}]
We introduce the functions $v_1=1_{\{|x|\leq10R\}} u$ and $v_2=(1-1_{\{|x|\leq10R\}})u,$ therefore $u=v_1+v_2,$ and we observe that, due to the disjointness of their supports, $|v_1+v_2|^2=|v_1|^2+|v_2|^2.$ Therefore, by linearity, 
\begin{equation*}
\begin{aligned}
\mathcal B&=\int\nabla\chi\left(\frac xR\right)\cdot\nabla\left(K\ast|v_1+v_2|^2\right)|v_1+v_2|^2\,dx\\
&=\int\nabla\chi\left(\frac xR\right)\cdot\nabla\left(K\ast|v_1|^2\right)|v_1|^2\,dx+\int\nabla\chi\left(\frac xR\right)\cdot\nabla\left(K\ast|v_2|^2\right)|v_1|^2\,dx\\
&+\int\nabla\chi\left(\frac xR\right)\cdot\nabla\left(K\ast|v_1|^2\right)|v_2|^2\,dx+\int\nabla\chi\left(\frac xR\right)\cdot\nabla\left(K\ast|v_2|^2\right)|v_2|^2\,dx
\end{aligned}
\end{equation*}
and since the support of $\nabla\chi\left(\frac xR\right)$ is contained in $B(0,2R)$ while the one of $v_2$ in $B(0,10R)^c,$ which are disjoint sets, it follows that 
\begin{equation}\label{B-split}
\begin{aligned}
\mathcal B&=\int\nabla\chi\left(\frac xR\right)\cdot\nabla\left(K\ast|v_1|^2\right)|v_1|^2\,dx+\int\nabla\chi\left(\frac xR\right)\cdot\nabla\left(K\ast|v_2|^2\right)|v_1|^2\,dx\\
&=\mathcal B_0+\mathcal B_1
\end{aligned}
\end{equation}
with
\begin{equation*}
\begin{aligned}
\mathcal B_1&=\int\nabla\chi\left(\frac xR\right)\cdot\nabla\left(K\ast((1-1_{\{|x|\leq10R\}})|u|^2)\right)|v_1|^2\,dx\\
&=-R^{-1}\int\Delta\chi\left(\frac xR\right)\left(K\ast((1-1_{\{|x|\leq10R\}})|u|^2)\right)|v_1|^2\,dx\\
&-2\int_{B(0,2R)}\left(K\ast((1-1_{\{|x|\leq10R\}})|u|^2)\right)\nabla\chi\left(\frac xR\right)\cdot\Re\{\bar v_1\nabla v_1\}\,dx\\
&=\mathcal B_{1,1}+\mathcal B_{1,2}.
\end{aligned}
\end{equation*}

\noindent We observe that 

\begin{equation*}
\begin{aligned}
|\mathcal B_{1,1}|&\lesssim R^{-1}\|K\ast((1-1_{\{|x|\leq10R\}})|u|^2)\|_{L^2}\|v_1\|^2_{L^4}\\
&\lesssim R^{-1} \|u\|_{L^4(B(0,10R)^c)}^2\|u\|_{L^4}^2\\
&\lesssim R^{-1} \|u\|_{L^4(B(0,10R)^c)}^2\|u\|_{H^1}^2,
\end{aligned}
\end{equation*}
while, by using  \eqref{eq:pointwise1}, 
\begin{equation*}
\begin{aligned}
|\mathcal B_{1,2}|&\lesssim \|1_{B(0,2R)}K\ast((1-1_{\{|x|\leq10R\}})|u|^2)\|_{L^\infty}\|\bar v_1\nabla v_1\|_{L^1}\\
&\lesssim R^{-3} \|u\|_{L^2(B(0,10R)^c)}^2\|\nabla v_1\|_{L^2}\|v_1\|_{L^2}\\
&\lesssim R^{-3} \|u\|_{L^2(B(0,10R)^c)}^2\| u\|_{H^1}^2.
\end{aligned}
\end{equation*}
By gluing everything together we eventually obtain 
\begin{equation}\label{eq:stimaB1}
|\mathcal B_1|\lesssim R^{-1}\left(\|u\|_{L^4(B(0,10R)^c)}^2 +R^{-2} \|u\|_{L^2(B(0,10R)^c)}^2\right)\| u\|_{H^1}^2.
\end{equation}

Then it remains to properly estimate the first integral in the right-hand side of \eqref{B-split}, namely $\mathcal B_0.$ Following the strategy in \cite{LW} we introduce the function $\tilde \chi_R=R^2\chi\left(\frac xR\right)-|x|^2$ and it straightforwardly yields  the equality
\begin{equation}\label{eq:Rb0}
\begin{aligned}
R\mathcal B_0&=R\int\nabla\chi\left(\frac xR\right)\cdot\nabla\left(K\ast|v_1|^2\right)|v_1|^2\,dx\\
&=\int\nabla\tilde\chi_R\cdot\nabla\left(K\ast|v_1|^2\right)|v_1|^2\,dx+\int\nabla(|x|^2)\cdot\nabla\left(K\ast|v_1|^2\right)|v_1|^2\,dx=\mathcal B_{0,1}+\mathcal B_{0,2}.
\end{aligned}
\end{equation}

\noindent By localizing again, by setting $v_1=w_1+w_2$ with $w_2=1_{\{|x|\leq R/10\}}v_1$ and noticing that the supports of $w_1$ and $w_2$ are disjoints, alike the one of $\nabla\tilde\chi_R$ and $w_2$,  we can split $\mathcal B_{0,1}$ in two further terms \begin{equation*}
\mathcal B_{0,1}=\int\nabla\tilde\chi_R\cdot\nabla\left(K\ast|v_1|^2\right)|v_1|^2\,dx=\int\nabla\tilde\chi_R\cdot\nabla\left(K\ast|w_1|^2\right)|w_1|^2\,dx+\mathcal B_{0,1}^{\prime\prime}=\mathcal B_{0,1}^{\prime}+\mathcal B_{0,1}^{\prime\prime}
\end{equation*}
where 
\begin{equation*}
\begin{aligned}
\mathcal B_{0,1}^{\prime\prime}&=\int\nabla\tilde\chi_R\cdot\nabla\left(K\ast(1_{\{|x|\leq R/10\}}|v_1|^2)\right)|w_1|^2\,dx\\
&=-\int\Delta\tilde\chi_R\left(K\ast(1_{\{|x|\leq R/10\}}|v_1|^2)\right)|w_1|^2\,dx-\int_{2R\leq|x|\leq10R}\left(K\ast(1_{\{|x|\leq R/10\}}|v_1|^2)\right)\nabla\tilde\chi_R\cdot\nabla(|w_1|^2)\,dx.
\end{aligned}
\end{equation*}
Similarly to the term $\mathcal B_1,$
\begin{equation*}
\begin{aligned}
\left|\int\Delta\tilde\chi_R\left(K\ast(1_{\{|x|\leq R/10\}}|v_1|^2)\right)|w_1|^2\,dx \right|&\lesssim \|K\ast(1_{\{|x|\leq R/10\}}|v_1|^2)\|_{L^2}\|w_1\|^2_{L^4}\\
&\lesssim\|u\|_{H^1}^2\|w_1\|^2_{L^4}\lesssim \|u\|_{H^1}^2\|u\|^2_{L^4(B(0,R/10)^c)}
\end{aligned}
\end{equation*}
while, by means of \eqref{eq:pointwise2} 
\begin{equation*}
\begin{aligned}
&\left| \int_{2R\leq|x|\leq10R}\left(K\ast(1_{\{|x|\leq R/10\}}|v_1|^2)\right)\nabla\tilde\chi_R\cdot\nabla(|w_1|^2)\,dx \right|\\
&\qquad\qquad\qquad\qquad\lesssim R \|1_{2R\leq|x|\leq10R}K\ast(1_{\{|x|\leq R/10\}}|v_1|^2)\|_{L^\infty}\|\nabla w_1\|_{L^2}\|w_1\|_{L^2}\\
&\qquad\qquad\qquad\qquad\lesssim R^{-2}\|u\|^2_{L^2}\|u\|_{H^1}\|u\|_{L^2(B(0,R/10)^c)}.
\end{aligned}
\end{equation*}
By summing up the two terms we end up with 
\begin{equation}\label{eq:stimB01primprime}
\begin{aligned}
|\mathcal B_{0,1}^{\prime\prime}|\lesssim \|u\|_{H^1}\left(\|u\|_{H^1}\|u\|^2_{L^4(B(0,R/10)^c)}+R^{-2}\|u\|^2_{L^2}\|u\|_{L^2(B(0,R/10)^c)}\right).
\end{aligned}
\end{equation}

It is left to estimate the term $\mathcal B_{0,1}^{\prime}=\int\nabla\tilde\chi_R\cdot\nabla\left(K\ast|w_1|^2\right)|w_1|^2\,dx.$ By setting $g=|w_1|^2=|1_{\{|x|\leq 10R\}}(1-1_{\{|x|\leq R/10\}})u|^2$ and making use of the Parseval identity,
\begin{equation*}
\begin{aligned}
\int\nabla\tilde\chi_R\cdot\nabla\left(K\ast|w_1|^2\right)|w_1|^2\,dx&=i\int \widehat {\nabla\tilde\chi_R g}(\xi)\cdot\xi\hat K\bar {\hat g}\,d\xi\\
&=\frac{i4\pi}{3}\int \widehat {\nabla\tilde\chi_R g}(\xi)\cdot\xi\overline{\left(2\mathcal F(\mathcal R_3^2g)-\mathcal F(\mathcal R_2^2g)-\mathcal F(\mathcal R_1^2g)\right) }\,d\xi.
\end{aligned}
\end{equation*}
Consider the generic term $\int \widehat {\nabla\tilde\chi_R g}(\xi)\cdot\xi\overline{\mathcal F(\mathcal R_j^2g)}\,d\xi$ ; it is explicitly given, up to (complex) constants, by
\begin{equation}\label{term:generic}
\begin{aligned}
\int \widehat {\nabla\tilde\chi_R g}(\xi)\cdot\xi \frac{\xi_j^2}{|\xi|^2}\bar{\hat g}(\xi)\,d\xi&=\int (\widehat{\nabla\tilde\chi_R}\ast\hat g)(\xi)\cdot\xi \frac{\xi_j^2}{|\xi|^2}\bar{\hat g}(\xi)\,d\xi\\
&=\iint \hat g(\eta)\widehat{\nabla\tilde\chi_R}(\eta-\xi)\cdot\left(\frac{\xi_1\xi}{|\xi|}\pm\frac{\eta_1\eta}{|\eta|}\right)\frac{\xi_1}{|\xi|}\hat g(\xi)\,d\eta\,d\xi\\
&=\int \nabla\tilde\chi_R\cdot\mathcal R_j(\nabla g)(x)\mathcal R_j\bar g(x)\,dx\\
&+\iint \hat g(\eta)\widehat{\nabla\tilde\chi_R}(\eta-\xi)\cdot\left(\frac{\xi_1\xi}{|\xi|}-\frac{\eta_1\eta}{|\eta|}\right)\frac{\xi_1}{|\xi|}\hat g(\xi)\,d\eta\,d\xi\\
&=-\frac12\int\Delta\tilde\chi_R|\mathcal R_j g(x)|^2\,dx+\iint \hat g(\eta)\widehat{\nabla\tilde\chi_R}(\eta-\xi)\cdot\left(\frac{\xi_1\xi}{|\xi|}-\frac{\eta_1\eta}{|\eta|}\right)\frac{\xi_1}{|\xi|}\hat g(\xi)\,d\eta\,d\xi
\end{aligned}
\end{equation}
since derivatives and Riesz transform commute. The first term in the right-hand side of \eqref{term:generic} is simply estimated by $\|u\|^4_{L^4(B(0,R)^c)}$ due to the continuity property of the Riesz transform. For the second term  we have 
\begin{equation*}
\begin{aligned}
\left|\iint \hat g(\eta)\widehat{\nabla\tilde\chi_R}(\eta-\xi)\cdot\left(\frac{\xi_1\xi}{|\xi|}-\frac{\eta_1\eta}{|\eta|}\right)\frac{\xi_1}{|\xi|}\hat g(\xi)\,d\eta\,d\xi\right|&\leq \left|\iint \hat g(\eta)\widehat{\nabla \chi_R}(\eta-\xi)\cdot\left(\frac{\xi_1\xi}{|\xi|}-\frac{\eta_1\eta}{|\eta|}\right)\frac{\xi_1}{|\xi|}\hat g(\xi)\,d\eta\,d\xi\right|\\
&+\left|\iint \hat g(\eta)\widehat{\nabla N}(\eta-\xi)\cdot\left(\frac{\xi_1\xi}{|\xi|}-\frac{\eta_1\eta}{|\eta|}\right)\frac{\xi_1}{|\xi|}\hat g(\xi)\,d\eta\,d\xi\right|
\end{aligned}
\end{equation*}
where $N=|x|^2$.
Now, since $\left|\frac{\eta\eta_j}{|\eta|}-\frac{\xi\xi_j}{|\xi|}\right|\lesssim |\eta-\xi|,$
\begin{equation*}
\begin{aligned}
\left|\iint \hat g(\eta)\widehat{\nabla\chi_R}(\eta-\xi)\cdot\left(\frac{\xi_1\xi}{|\xi|}-\frac{\eta_1\eta}{|\eta|}\right)\frac{\xi_1}{|\xi|}\hat g(\xi)\,d\eta\,d\xi\right|&\lesssim\iint|\hat g(\xi)\|\hat g(\eta)\|\eta-\xi|^2|\widehat{\chi_R}(\eta-\xi)|\,d\eta\,d\xi\\
&=\int|\hat g(\xi)|\int |\hat g(\eta)|\left\|\eta-\xi|^2\widehat{\chi_R}(\eta-\xi)\right|\,d\eta\,d\xi\\
&=\int|\hat g(\xi)|\left(|\hat g|\ast |\mathcal F(-\Delta \chi_R)|\right)\,d\xi\\
&=\int|\hat g(\xi)|\left( |\hat g|\ast \left| \mathcal F\left( -\Delta\chi\left(\frac \cdot R\right)\right)\right|\right)\,d\xi\\
&=\int|\hat g(\xi)|\left( |\hat g|\ast \left| \mathcal F\left( h\left(\frac \cdot R\right)\right)\right|\right)\,d\xi
\end{aligned}
\end{equation*}
where we defined $h(\cdot)=-\Delta\chi(\cdot),$ 
and continue in this way
\begin{equation*}
\begin{aligned}
\int|\hat g(\xi)|\left( |\hat g|\ast \left| \mathcal F\left( h\left(\frac \cdot R\right)\right)\right|\right)\,d\xi&\leq \int|\hat g(\xi)|\left( |\hat g|\ast \left| \mathcal F\left( h\left(\frac \cdot R\right)\right)\right|\right)\,d\xi\\
&\lesssim\|g\|_{L^2}\left\| |\hat g|\ast \left| \mathcal F\left( h\left(\frac \cdot R\right)\right)\right|\right\|_{L^2}\\
&\lesssim\|g\|_{L^2}^2\left\|  \mathcal F\left( h\left(\frac \cdot R\right)\right)\right\|_{L^1}\\
&\lesssim R^3 \|g\|_{L^2}^2\left\| \hat h(R\cdot) \right\|_{L^1}\\
&=\|g\|_{L^2}^2\left\| \hat h \right\|_{L^1}\\
&\lesssim \|g\|_{L^2}^2=\|w_1\|^2_{L^4}\leq\|u\|^2_{L^4(B(0,R/10)^c)},
\end{aligned}
\end{equation*}
since $\hat h\in L^1$ (being $\chi$ in the Schwartz class, hence $\Delta\chi$ is in the Schwartz class, so it is integrable).\\
In the same way
\begin{equation*}
\begin{aligned}
\left|\iint \hat g(\eta)\widehat{\nabla N}(\eta-\xi)\cdot\left(\frac{\xi_1\xi}{|\xi|}-\frac{\eta_1\eta}{|\eta|}\right)\frac{\xi_1}{|\xi|}\hat g(\xi)\,d\eta\,d\xi\right|&\lesssim\iint|\hat g(\xi)\|\hat g(\eta)\|\eta-\xi|^2|\widehat{N}(\eta-\xi)|\,d\eta\,d\xi\\
&=\int|\hat g(\xi)|\int |\hat g(\eta)|\left||\eta-\xi|^2\widehat{N}(\eta-\xi)\right|\,d\eta\,d\xi\\
&=\int|\hat g(\xi)|\left(|\hat g|\ast 6 \delta_0\right)\,d\xi\\
&\lesssim \|g\|_{L^2}^2\leq\|u\|^2_{L^4(B(0,R/10)^c)}.
\end{aligned}
\end{equation*}
Therefore 
\begin{equation}\label{eq:stimB01prime}
\left|\mathcal B_{0,1}^\prime \right|\lesssim \|u\|^2_{L^4(B(0,R/10)^c)}.
\end{equation}

Now we finish with the estimate of $\mathcal B_{0,2}=\int\nabla(|x|^2)\cdot\nabla\left(K\ast|v_1|^2\right)|v_1|^2\,dx.$ We observe that a direct application of the Parseval identity gives
\begin{equation*}
\begin{aligned}
\mathcal B_{0,2}&=2\int x\cdot\nabla\left(K\ast|v_1|^2\right)|v_1|^2\,dx=-2\int\nabla\cdot\left(x|v_1|^2\right)\left(K\ast|v_1|^2\right)\,dx\\
&=-6\int\left(K\ast|v_1|^2\right)|v_1|^2\,dx-2\int x\cdot\nabla(|v_1|^2)\left(K\ast|v_1|^2\right)\,dx=-3\int\left(K\ast|v_1|^2\right)|v_1|^2\,dx
\end{aligned}
\end{equation*}
(above $\nabla\cdot$ stands for the divergence operator) then, by writing $|v_1|=|v_1\pm u|,$ it is with a direct computation to produce, using the  Cauchy-Schwarz inequality and the continuity property of the dipolar kernel,  
\begin{equation}\label{eq:directcom2}
\mathcal B_{0,2}\geq-3\int\left(K\ast|u|^2\right)|u|^2\,dx+\tilde\varepsilon(r)
\end{equation}
where 
\begin{equation*}
\tilde\varepsilon(r)\sim\|u\|^2_{L^4(B(0, r)^c)}, \qquad r\sim R.
\end{equation*}
Now summing up \eqref{eq:directcom}, \eqref{B-split}, \eqref{eq:stimaB1}, \eqref{eq:Rb0}, \eqref{eq:stimB01primprime}, \eqref{eq:stimB01prime} with \eqref{eq:directcom2} we get the desired results stated in \eqref{eq:varr} of \autoref{lemma:6.2} and in \autoref{prop:6.3}.
\end{proof}

\subsection{Death of the soliton-like solution} In this section we can close the Kenig and Merle scheme by showing, through a convexity argument, that the soliton-like solution built in \autoref{min:el} is the trivial one, clearly reaching a contradiction with respect to its infinite spacetime norm. We still keep the convention $u=u_{sl}.$\\

\noindent By gluing the estimate in \autoref{prop:6.3} with the bound in \autoref{cor:lb} we get
\begin{equation*}
\frac{d^2}{dt^2}z_R(t)\geq \alpha-\varepsilon_1(R)-\varepsilon_2(R).
\end{equation*}
and we can finally conclude if we are able to show that also $\varepsilon_{1,2}(R)\to0$ as $R\to\infty,$ uniformly in time. Since they have qualitatively the same form, let us control just $\varepsilon_1(R).$
At this  point we can exploit a strategy as in \cite{DHR}, which allows us to conclude. In fact, consider two times $0<\tau<\tau_1$ and the interval $I=[\tau, \tau_1]$ and a radius $R\geq\sup_{I}|x(t)|+\rho$ where $\rho$ is as in \autoref{prop:5.4}. Then $\{|x|>R\}\subset\{|x+x(t)|>\rho\}$ and so $\varepsilon_1(R)\to0$ as $R\to\infty,$ which in turn implies, with the choice of $\varepsilon_{1,2}=\alpha/4$
\begin{equation*}
\begin{aligned}
\frac{d^2}{dt^2}z_R(t)\geq \frac\alpha2>0
\end{aligned}
\end{equation*}
for $R$ sufficiently large. Integrating on $I,$ we get 
\begin{equation*}
R\gtrsim R\|u\|_{L^2}\|\nabla u\|_{L^2}\gtrsim\left|\frac{d}{dt}z_R(\tau_1)-\frac{d}{dt}z_R(\tau)\right|\geq \frac\alpha2(\tau_1-\tau)
\end{equation*}
and by choosing $R=\rho+\delta\tau_1$ we get 
\begin{equation*}
\rho+\delta\tau_1\geq \beta (\tau_1-\tau),
\end{equation*}
for some $\beta>0.$ 
\begin{remark}
Since \eqref{sublin-decay} holds, it is always possible, once  $\delta>0$ has been selected,  to find $\tau=\tau(\delta)$ such that $|x(t)|\leq\delta t$ for any $t\geq\tau.$
\end{remark}
\noindent Therefore by choosing $\delta=\beta/2$ we get 
\begin{equation*}
\frac\beta2\tau_1\leq\rho+\beta\tau=\rho+\beta\tau\left(\frac\beta2\right)
\end{equation*}
which is a contradiction since the right-hand side of the above inequality is a finite constant, while the left-hand side diverges as $\tau_1\to\infty.$
We have eventually proved the following.
\begin{prop}
Let $u_{sl}$ be the precompact solution to \eqref{GP} constructed in the previous section. Then $u_{sl}\equiv0.$ 
\end{prop}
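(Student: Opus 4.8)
The plan is to run a convexity argument on the localized variance $z_R(t)$, feeding in the two lower bounds already at our disposal. First, combining \autoref{prop:6.3} with the variational bound of \autoref{cor:lb} applied to $u=u_{sl}(t)$ (legitimate since $u_{sl}$ satisfies the hypotheses of \autoref{thm:1} by \autoref{lemcri}), one gets
\begin{equation*}
\frac{d^2}{dt^2}z_R(t)\geq\alpha-\varepsilon_1(R)-\varepsilon_2(R),\qquad \alpha=4\min\{\gamma(c)-\mathcal E(u_{sl,0}),\mathcal E(u_{sl,0})\}.
\end{equation*}
Here one should first observe that $\alpha>0$ whenever $u_{sl}\not\equiv0$: indeed $\gamma(c)-\mathcal E(u_{sl,0})>0$ by construction, while $\mathcal E-\frac13\mathcal G=\frac16\mathcal T$ together with $\mathcal G(u_{sl,0})>0$ forces $\mathcal E(u_{sl,0})>0$. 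So it suffices to derive a contradiction from $\alpha>0$.

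Next I would absorb the two remainders $\varepsilon_{1,2}(R)$, whose common shape is $C\int_{|x|\geq r}(|\nabla u|^2+R^{-2}|u|^2+|u|^4)\,dx$ with $r\sim R$, uniformly in time. This is exactly what the precompactness of $\{u_{sl}(t,\cdot-x(t))\}$ buys, in the quantitative form of \autoref{prop:5.4}: fix $\varepsilon>0$, let $\rho=\rho(\varepsilon)$ be the associated radius, and work on an arbitrary compact interval $I=[\tau,\tau_1]\subset\R^+$. Taking $R\geq\sup_{t\in I}|x(t)|+\rho$ forces $\{|x|>R\}\subset\{|x+x(t)|>\rho\}$ for every $t\in I$, hence $\varepsilon_1(R)+\varepsilon_2(R)<\varepsilon$ on $I$. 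Choosing $\varepsilon=\alpha/2$ then yields $\frac{d^2}{dt^2}z_R(t)\geq\alpha/2>0$ for all $t\in I$.

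Finally I integrate in time and confront the gain with an a priori loss. Integrating once over $I$ gives $\frac{d}{dt}z_R(\tau_1)-\frac{d}{dt}z_R(\tau)\geq\frac{\alpha}{2}(\tau_1-\tau)$, while \eqref{virial1} together with mass conservation and the uniform $H^1$ bound on the precompact orbit gives $\left|\frac{d}{dt}z_R(t)\right|\lesssim R\|u\|_{L^2}\|\nabla u\|_{L^2}\lesssim R$. Hence $R\geq\beta(\tau_1-\tau)$ for some fixed $\beta>0$. The crucial point is that $R$ was constrained only from below, by $\sup_{t\in I}|x(t)|+\rho$, and the sublinear decay \eqref{sublin-decay} permits $R$ to be kept essentially linear in $\tau_1$ with arbitrarily small slope: given $\delta>0$ choose $\tau=\tau(\delta)$ with $|x(t)|\leq\delta t$ for $t\geq\tau$, and set $R=\rho+\delta\tau_1$, so that $\rho+\delta\tau_1\geq\beta(\tau_1-\tau)$. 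Picking $\delta=\beta/2$ leaves $\frac{\beta}{2}\tau_1\leq\rho+\beta\,\tau(\beta/2)$, whose right-hand side is a fixed finite number while the left-hand side diverges as $\tau_1\to\infty$. This contradiction rules out $\alpha>0$, whence $u_{sl}\equiv0$; this in turn contradicts the nontriviality (and the infinite $L^8L^4$ norm) of the minimal element granted by \autoref{lemcri}, closing the Kenig--Merle scheme.

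I expect the single delicate point to be this last balancing act: $R$ must be large enough to make the spatial tails $\varepsilon_{1,2}(R)$ negligible — and since the mass of $u_{sl}$ concentrates near the moving center $x(t)$ this forces $R\gtrsim|x(t)|$ — yet $R$ too large destroys the argument through the $O(R)$ bound on $\frac{d}{dt}z_R$. It is precisely the sublinear growth \eqref{sublin-decay} of the translation path, itself a consequence of the vanishing momentum of $u_{sl}$, that reconciles the two constraints; everything else (the nonlocal contribution to $\frac{d^2}{dt^2}z_R$, already tamed in \autoref{lemma:6.2}, and the routine tail estimates) is comparatively soft.
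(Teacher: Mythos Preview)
Your argument is correct and follows essentially the same route as the paper: combine \autoref{prop:6.3} with \autoref{cor:lb}, use \autoref{prop:5.4} on $I=[\tau,\tau_1]$ with $R\geq\sup_I|x(t)|+\rho$ to absorb $\varepsilon_{1,2}(R)$, integrate once, bound $|\frac{d}{dt}z_R|\lesssim R$, and close via the sublinear growth \eqref{sublin-decay} with the choices $R=\rho+\delta\tau_1$, $\delta=\beta/2$. Your added justification that $\alpha>0$ (via $\mathcal E-\tfrac13\mathcal G=\tfrac16\mathcal T$) is a welcome point the paper leaves implicit.
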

This last Proposition closes the concentration/compactness and rigidity method, since the trivial solution cannot have  a divergent spacetime norm.

\appendix

\section{}
In this first Appendix, we recall the Strichartz estimates. Beside their use in our work, they are the basic tool to study nonlinear dispersive equations of Schr\"odinger-type (but not only them), whose proof can be found in the classical monographs \cite{C, LP}, and \cite{KT} for the endpoint case ($r=6$ below). We refer the reader to these already mentioned works for more accurate treatments on these kind of a priori estimates for the Schr\"odinger propagator. We just point out here that they are essentially  consequences of the so-called dispersive estimate
\begin{equation}\label{disp:est}
\|U(t)f\|_{L^\infty}\leq C|t|^{-3/2}\|f\|_{L^1}, \qquad \forall\, t\neq0,\quad \forall\,f\in L^1,
\end{equation}
which also holds for general dimensions, namely in the whole space $\R^d$ with $L^1-L^\infty$ decay rate given by $|t|^{-d/2}.$ More generally, conservation of the $L^2$-norm along the linear propagation (which for the nonlinearity in \eqref{GP} also holds true for the nonlinear flow, see \eqref{eq:mass}) together with \eqref{disp:est} implies, as a trivial application of the Riesz-Thorin interpolation theorem, that for any $p\in[2,\infty]$ the $L^p-L^{p^\prime}$ bound below is satisfied:
\begin{equation}\label{disp:est2}
\|U(t)f\|_{L^p}\leq C|t|^{-\frac32\left(\frac12-\frac1p\right)}\|f\|_{L^{p^\prime}}, \qquad \forall\, t\neq0,\quad \forall\,f\in L^{p^\prime}.
\end{equation}

Let us now state the Strichartz estimates. \\

\noindent Let $(q, r)$, $(\gamma, \rho)$ be two arbitrary $3D-$admissible pairs, namely they satisfy the algebraic condition \begin{equation*}
\frac2q=3\left(\frac12-\frac1r\right),\qquad 2\leq r\leq6.
\end{equation*} 
Then for any interval $I\ni t_0$, bounded or unbounded,  
\begin{equation*}
\begin{aligned}
\|U(t) f\|_{L^q_IL^r}&\leq C_1 \|f\|_{L^2}, \qquad\qquad \forall\, f=f(x)\in L^2,\\
\left\|\int_{t_0}^{t}U(t-s) F(s)\,\right\|_{L^q_IL^r}&\leq C_2\|F\|_{L^{\gamma^\prime}_IL^{\rho^\prime}}, \qquad \forall\, F=F(t,x)\in L^{\gamma^\prime}_IL^{\rho^\prime}.
\end{aligned}
\end{equation*}
where the constant $C_1, C_2$  depend only on the structural parameter and not on the functions $f, F$ themselves.
 
\begin{remark}
Due to the commutativity property between derivatives and the linear flow, the previous estimates extend to Sobolev spaces: 
\begin{equation*}
\begin{aligned}
\|U(t) f\|_{L^q_IW^{1,r}}&\leq \tilde C_1 \|f\|_{H^1}, \qquad\quad\quad\,\forall\, f=f(x)\in H^1, \\
\bigg\|\int_{t_0}^{t}U(t-s) F(s)\,ds\bigg\|_{L^q_IW^{1,r}}&\leq \tilde C_2\|F\|_{L^{\gamma^\prime}_IW^{1,\rho^\prime}},\qquad \forall\, F=F(t,x)\in L^{\gamma^\prime}_IW^{1,\rho^\prime}.
\end{aligned}
\end{equation*}
\end{remark}
\noindent We will also use an extension for non-admissible pairs for the inhomogeneous Strichartz estimates, see \cite{F} and \cite{Vil} for a general treatment.
For any interval $I\ni t_0$, bounded or unbounded,  
\begin{equation}\label{foschi}
\bigg\|\int_{t_0}^{t}U(t-s) F(s)\,ds\bigg\|_{L^8_IL^4}\leq C_3\|F\|_{ L^{8/3}_IL^{4/3}},\qquad \forall\, F=F(t,x)\in L^{8/3}_IL^{4/3}.
\end{equation}

\section{}\label{app}
In this appendix we prove \autoref{lemma3.1}, \autoref{lemma3.2}, \autoref{lemma3.3} and \autoref{lemma3}.

\begin{proof}[Proof of \autoref{lemma3.1}]
The proof in the stable regime can be shown as consequence of the coercivity of the energy, see \cite{CMS}. In the unstable regime, the proof is contained in \cite{BJ}. We sketch it. First of all, it is worth mentioning that under condition \eqref{UR} the energy could be negative, then a classical Glassey's argument would yield  finite time blowing-up solutions. Fix now $\lambda_2>0,$ $\lambda_1-\frac{4\pi}{3}\lambda_2<0.$ Thanks to \autoref{thm:1} it is sufficient to show that for initial data $u_0$ small enough in the $H^1$ space, then $\mathcal G(u_0)>0$ and $E(u_0)<\gamma(u_0).$ Let us recall that the potential energy can be written as \begin{equation*}
\mathcal P(u)=\left(\lambda_1-\frac{4\pi}{3}\lambda_2\right)\|u\|_{L^4}^4+\frac{4\lambda_2\pi}{(2\pi)^3}\int \frac{\xi_3^2}{|\xi|^2}\left(\widehat{|u|^2}\right)^2\,d\xi\geq\left(\lambda_1-\frac{4\pi}{3}\lambda_2\right)\|u\|_{L^4}^4.
\end{equation*}
Therefore  by using in order the Plancherel identity and the Sobolev embedding and moreover recalling we are working on $\lambda_1-\frac{4\pi}{3}\lambda_2<0$ we have 
\begin{equation*}
\begin{aligned}
\mathcal G(u)&\geq \mathcal T(u)+\frac{3}{2}\left(\lambda_1-\frac{4\pi}{3}\lambda_2\right)\|u\|_{L^4}^4\\
&\geq \|\nabla u\|_{L^2}^2-C \|u\|_{L^2}\|\nabla u\|_{L^2}^{3}>0
\end{aligned}
\end{equation*}
provided $\|u\|_{H^1}$ is small enough. 
\end{proof}

\begin{proof}[Proof of \autoref{lemma3.2}]
The proof is contained in \cite{BJ}, where it is shown that if the initial datum is small enough (and so the solution is global), this yields  uniform bound on the Strichartz norm $L^{8/3}W^{1,4}$ and this in turn implies that the solution scatters (see the monographs  \cite{C, LP}). The Duhamel's formulation of \eqref{GP} is 
\begin{equation*}
u(t,x)=U(t)u_0+i\int_{0}^tU(t-s)\left(\lambda_1|u|^2u+\lambda_2(K\ast|u|^2)u\right)(s)\,ds
\end{equation*} 
and by using the Strichartz estimates with $(q,r)=(\gamma,\rho)=(8/3,4)$ then $(q^\prime, r^\prime)=(8/5,4/3),$ by using the  H\"older inequality and the continuity property of the dipolar kernel, it is easy to get 
\begin{equation*}
\|u\|_{L^{8/3}W^{1,4}}\leq C\|u_0\|_{H^1}+C\|u\|^{5/3}_{L^{8/3}W^{1,4}}.
\end{equation*}
Let $\delta=\|u_0\|_{H^1};$ noticing that the set $S:=\{s \ s.t. \ f(s):=s-C\delta-Cs^{5/3}\leq0\}$ decomposes in two disjoint connected components, the continuity of the flow implies that if $\delta$ is sufficiently small, the $L^{8/3}W^{1,4}$-norm of $u$ is uniformly bounded for (positive) times. Scattering for (positive) times is an easy consequence of this uniform control and the definition of the scattering state. In fact, by defining $v(t)=U(-t)u$ and making use of the Duhamel's representation formula above, it is straightforward to check that 
\begin{equation*}
\|v(t_1)-v(t_2)\|_{H^1}\overset{t_{1,2}\to+\infty}\longrightarrow0.
\end{equation*}
Definition of $v$ and the unitary property of the linear propagator $U(t)$ eventually gives the result. The analysis for negative times is the same.
\end{proof}

\begin{proof}[Proof of \autoref{lemma3.3}]
If the solution $u$ to \eqref{GP} is global and such that $u(t,x)\in L^8L^4,$ then a perturbative argument shows that  $u(t,x)\in L^{8/3}W^{1,4},$ therefore concluding as in the proof of  \autoref{lemma3.2}. It is worth mentioning that in its generality this result was established by Cazenave and Weissler in their paper on the so-called \emph{rapidly decaying solutions}, see \cite{CW}. Since for any fixed $T$ the $u\in L^q_IW^{1,r}$ with $I=(0,T),$ let us consider $v(t)=u(t+T).$ It follows that 
\begin{equation*}
v(t,x)=U(t)u(T)+i\int_{0}^tU(t-s)\left(\lambda_1|v|^2v+\lambda_2(K\ast|v|^2)v\right)(s)\,ds
\end{equation*}
and by means of the Strichartz estimates, for $\tilde I=(0, \tilde T)$
\begin{equation*}
\begin{aligned}
\|v\|_{L^{8/3}_{\tilde I}W^{1,4}}&\leq C\|u(T)\|_{H^1}+C\|v\|
^2_{L^8_{\tilde I}L^4}\|v\|_{L^{8/3}_{\tilde I}W^{1,4}}\\
&\leq C+C\|u\|^2_{L^8_{(T,T+\tilde T)}L^4}\|v\|_{L^{8/3}_{\tilde I}W^{1,4}}
\end{aligned}
\end{equation*}
since $u$ is uniformly bounded in time in $H^1.$ It suffices to select $T\gg1$ such that $C\|u\|^2_{L^8_{(T,\infty)}L^4}\leq\frac12$ to obtain
\begin{equation*}
\sup_{\tilde T>0}\|u\|_{L^{8/3}_{\tilde I}W^{1,4}}<\infty \implies u\in L^{8/3}_{(0,\infty)}W^{1,4}.
\end{equation*}
For negative times the analysis is exactly the same.
\end{proof}

\begin{proof}[Proof of \autoref{lemma3}]

If the equation were reduced to the classical NLS equation \eqref{NLS}, then the proof would be contained in \cite{HR}. Since we are in the presence of the dipolar interaction term we will sketch the proof for sake of clarity.  Let $z=u-w;$ then $z$ satisfies 
\begin{equation}\label{diffe}
i\partial_{t}z+\frac12\Delta z=\lambda_1|u|^{2}u-\lambda_1|w|^{2}w+\lambda_2(K\ast|u|^2)u-\lambda_2(K\ast|w|^2)w-e
\end{equation} 
subject to initial condition $z_0=z(0,x)=u_0-w_0.$ Since $\|w\|_{L^8L^4}\leq M$ we can partition $[t_0, \infty)$ into $m = m(M)$ intervals $
I_j=[t_j , t_{j+1}]$ such that $\|w\|_{L^8_{I_j}L^4}\leq \delta$  for each $j,$ where $\delta$ is small enough (to be chosen later on). 
The integral formulation of  \eqref{diffe} is 
\begin{equation*}
z=U(t-t_j)z(t_j)+i\int_{t_j}^tU(t-s)\left(Z_1+Z_2\right)(s)\,ds
\end{equation*} 
where, as in \cite{HR} for NLS
\begin{equation*}
\begin{aligned}
Z_1&=|u|^2u-|w|^2w =|z+w|^2(z+w)-|w|^2w \\
&=w^2\bar z+2|w|^2z+2w|z|^2+\bar wz^2+|z|^2z+e
\end{aligned}
\end{equation*} 

\noindent while  the nonlocal nonlinearity splits as

\begin{equation*}
\begin{aligned}
Z_2&=(K\ast|u|^2)u-(K\ast|w|^2)w=(K\ast|z+w|^2)(z+w)-(K\ast|w|^2)w\\
&=(K\ast(|z+w|^2-|w|^2))w-(K\ast|z+w|^2)z,
\end{aligned}
\end{equation*} 
and  due to \eqref{foschi} on $I_j$
\begin{equation*}
\begin{aligned}
\left\|\int_{t_j}^{t} U(t-s)\left[\left((K\ast(|z+w|^2-|w|^2)\right)w\right](s)\,ds\right\|_{L^8_{I_j}L^4}&\lesssim\|(K\ast(|z+w|^2-|w|^2))w\|_{L^{8/3}_{I_j}L^{4/3}} \\
&\lesssim \||z+w|^2-|w|^2\|_{L^4_{I_j}L^2}\|w\|_{L^8_{I_j}L^4}\\
&\lesssim \|z\|^2_{L^{8}_{I_j}L^{4}}\|w\|_{L^8_{I_j}L^4}+\|z\bar w\|_{L^4_{I_j}L^2}\|w\|_{L^8_{I_j}L^4}\\
&\lesssim \|z\|^2_{L^{8}_{I_j}L^{4}}\|w\|_{L^8_{I_j}L^4}+\|z\|_{L^8_{I_j}L^4}\|w\|^2_{L^8_{I_j}L^4}
\end{aligned}
\end{equation*} 
and similarly 
\begin{equation*}
\begin{aligned}
\left\|\int_{t_j}^{t} U(t-s)\left[(K\ast|z+w|^2)z\right](s)\,ds\right\|_{L^8_{I_j}L^4}&\lesssim\|(K\ast|z+w|^2)z\|_{L^{8/3}_{I_j}L^{4/3}} \\
&\lesssim \||z+w|^2\|_{L^4_{I_j}L^2}\|z\|_{L^8_{I_j}L^4}\\
&\lesssim \left(\|z\|^2_{L^{8}_{I_j}L^{4}}+\| w\|^2_{L^{8}_{I_j}L^{4}}\right)\|z\|_{L^8_{I_j}L^4}\\
&\lesssim \|z\|^3_{L^{8}_{I_j}L^{4}}+\| w\|_{L^8_{I_j}L^4}^2\|z\|_{L^8_{I_j}L^4}
\end{aligned}
\end{equation*} 
hence, by using the hypothesis,
\begin{equation*}
\begin{aligned}
\left\|\int_{t_j}^{t} U(t-s)Z_2(s)\,ds\right\|_{L^8_{I_j}L^4}&\lesssim \delta\|z\|^2_{L^{8}_{I_j}L^{4}}+\delta^2\|z\|_{L^8_{I_j}L^4}+ \|z\|^3_{L^{8}_{I_j}L^{4}}
\end{aligned}
\end{equation*}
and therefore the nonlocal interaction term leads to the same estimate for  $Z_1$ contained in  \cite{HR}. By gluing up everything together we get 
\begin{equation*}
\|z\|_{L^{8}_{I_j}L^{4}}\leq \|U(t-t_j)z(t_j)\|_{L^{8}_{I_j}L^{4}}+c\delta\|z\|^2_{L^{8}_{I_j}L^{4}}+c\delta^2\|z\|_{L^8_{I_j}L^4}+c\|z\|^3_{L^{8}_{I_j}L^{4}}+c\varepsilon,
\end{equation*}
thus the proof can be concluded in the same way of \cite[Proposition 2.3]{HR}. We report here the strategy for sake of clarity. For $\delta$ small enough, 
\begin{equation}\label{A:4}
\|z\|_{L^{8}_{I_j}L^{4}}\leq 2\|U(t-t_j)z(t_j)\|_{L^{8}_{I_j}L^{4}}+2C\varepsilon
\end{equation}
and choosing $t=t_{j+1}$ in the integral representation of $z(t)$ one obtains
\begin{equation}\label{A:5}
U(t-t_{j+1})z(t_{j+1})=U(t-t_j)z(t_j)+i\int_{t_j}^{j_{j+1}}U(t-s)(Z_1+Z_2)(s)\,ds,
\end{equation}
and so analogously to the estimates above it follows that 
\begin{equation*}
\|U(t-t_{j+1})z(t_{j+1})\|_{L^{8}L^{4}}\leq \|U(t-t_j)z(t_j)\|_{L^{8}L^{4}}+C\delta^2\|z\|_{L^{8}_{I_j}L^{4}}+C\delta\|z\|^2_{L^{8}_{I_j}L^{4}}+C\|z\|^3_{L^{8}_{I_j}L^{4}}+C\varepsilon.
\end{equation*}
Summing up \eqref{A:4} and \eqref{A:5} we eventually obtain
\begin{equation*}
\|U(t-t_{j+1})z(t_{j+1})\|_{L^{8}L^{4}}\leq 2\|U(t-t_j)z(t_j)\|_{L^{8}L^{4}}+2C\varepsilon
\end{equation*}
and iterating on $j\in\N$ it can be concluded that 
\begin{equation*}
\|U(t-t_{j})z(t_{j})\|_{L^{8}L^{4}}\leq 2^j\|U(t-t_0)z(t_0)\|_{L^{8}L^{4}}+2(2^j-1)C\varepsilon\lesssim 2^{j+2}\varepsilon.
\end{equation*}
The smallness assumption on $\delta$ is now defined if $2^{m+2}\varepsilon$ is sufficiently small (depending on the absolute constants of the a priori estimates and of course depending on $m$ which in turn is depending on $M$ of the statement).
\end{proof}

\section*{Acknowledgements}
\noindent J. B. is partially  supported  by Project 2016 ``Dinamica di equazioni nonlineari dispersive'' of  FONDAZIONE DI SARDEGNA. The authors warmly thank the referee for the careful reading and for the suggestions given in order to improve a preliminary version of the paper.

\begin{bibdiv}
\begin{biblist}

\bib{AEMWC}{article}{
   author={Anderson, M.H.},
   author={Ensher, J.R.},
    author={Matthews, M.R.},
   author={Wieman, C.E.},
   author={Cornell, E.A.},
   title={Observation of Bose-Einstein Condensation in a Dilute Atomic Vapor},
   journal={Science},
   volume={269},
   date={1995},
   number={5221},
   pages={198--201},
}

\bib{AM}{book}{
   author={Ambrosetti, A.},
   author={Malchiodi, A.},
   title={Nonlinear analysis and semilinear elliptic problems},
   series={Cambridge Studies in Advanced Mathematics},
   edition={104},
   publisher={Cambridge University Press, Cambridge},
   date={2007},
}

\bib{AS}{article}{
   author={Antonelli, Paolo},
   author={Sparber, Christof},
   title={Existence of solitary waves in dipolar quantum gases},
   journal={Phys. D},
   volume={240},
   date={2011},
   number={4-5},
   pages={426--431},
   issn={0167-2789},
}

\bib{BG}{article}{
   author={Bahouri, Hajer},
   author={G\'erard, Patrick},
   title={High frequency approximation of solutions to critical nonlinear
   wave equations},
   journal={Amer. J. Math.},
   volume={121},
   date={1999},
   number={1},
   pages={131--175},
}

\bib{BaCa}{article}{
   author={Bao, W.},
   author={Cai, Y.},
   title={Mathematical Theory and Numerical methods for Bose-Einstein condensation},
   journal={Kinetic and Related Models AMS},
   volume={6},
   date={2013},
   number={1},
   pages={1--135},
}

\bib{BaCaWa}{article}{
   author={Bao, W.},
   author={Cai, Y.},
   author={Wang, H.},
   title={Efficient numerical method for computing ground states and dynamic of dipolar Bose-Einstein condensates},
   journal={J. Comput. Phys.},
   volume={229},
   date={2010},
   pages={7874--7892},
}

\bib{BJ}{article}{
   author={Bellazzini, Jacopo},
   author={Jeanjean, Louis},
   title={On dipolar quantum gases in the unstable regime},
   journal={SIAM J. Math. Anal.},
   volume={48},
   date={2016},
   number={3},
   pages={2028--2058},
   issn={0036-1410},
}	

\bib{BrSaToHu}{article}{
   author={Bradley, C.C.},
   author={Sackett, C. A.},
   author={Tolett, J.J.},
   author={Hulet, R.J.},
   title={Evidence of Bose-Einstein Condensation in an Atomic Gas with Attractive Interaction},
   journal={Phys. Rev. Lett. },
   volume={75},
   date={1995},
   pages={1687--1690},
}		

\bib{CMS}{article}{
   author={Carles, R\'emi},
   author={Markowich, Peter A.},
   author={Sparber, Christof},
   title={On the Gross-Pitaevskii equation for trapped dipolar quantum
   gases},
   journal={Nonlinearity},
   volume={21},
   date={2008},
   number={11},
   pages={2569--2590},
   issn={0951-7715},
}

\bib{C}{book}{
    AUTHOR = {Cazenave, Thierry},
     TITLE = {Semilinear {S}chr\"odinger equations},
    SERIES = {Courant Lecture Notes in Mathematics},
    VOLUME = {10},
 PUBLISHER = {New York University, Courant Institute of Mathematical
              Sciences, New York; American Mathematical Society, Providence,
              RI},
      YEAR = {2003},
     PAGES = {xiv+323},
      ISBN = {0-8218-3399-5},
   MRCLASS = {35Q55 (35-01 35J10 35Q40)},
  MRNUMBER = {2002047},
}

\bib{CW}{article}{
   author={Cazenave, Thierry},
   author={Weissler, Fred B.},
   title={Rapidly decaying solutions of the nonlinear Schr\"odinger equation},
   journal={Comm. Math. Phys.},
   volume={147},
   date={1992},
   number={1},
   pages={75--100},
}

\bib{DHR}{article}{
   author={Duyckaerts, Thomas},
   author={Holmer, Justin},
   author={Roudenko, Svetlana},
   title={Scattering for the non-radial 3D cubic nonlinear Schr\"odinger
   equation},
   journal={Math. Res. Lett.},
   volume={15},
   date={2008},
   number={6},
   pages={1233--1250},
   issn={1073-2780},
}

\bib{F}{article}{
   author={Foschi, Damiano},
   title={Inhomogeneous Strichartz estimates},
   journal={J. Hyperbolic Differ. Equ.},
   volume={2},
   date={2005},
   number={1},
   pages={1--24},
   issn={0219-8916},
}

\bib{Ger}{article}{
   author={G\'erard, Patrick},
   title={Description du d\'efaut de compacit\'e de l'injection de Sobolev},
   language={French, with French summary},
   journal={ESAIM Control Optim. Calc. Var.},
   volume={3},
   date={1998},
   pages={213--233},
   issn={1292-8119},
}

\bib{Gla}{article}{
   author={Glassey, R. T.},
   title={On the blowing up of solutions to the Cauchy problem for nonlinear
   Schr\"odinger equations},
   journal={J. Math. Phys.},
   volume={18},
   date={1977},
   number={9},
   pages={1794--1797},
   issn={0022-2488},
}
		
\bib{HR}{article}{
   author={Holmer, Justin},
   author={Roudenko, Svetlana},
   title={A sharp condition for scattering of the radial 3D cubic nonlinear
   Schr\"odinger equation},
   journal={Comm. Math. Phys.},
   volume={282},
   date={2008},
   number={2},
   pages={435--467},
   issn={0010-3616},
}
			
\bib{KT}{article}{
   author={Keel, Markus},
   author={Tao, Terence},
   title={Endpoint Strichartz estimates},
   journal={Amer. J. Math.},
   volume={120},
   date={1998},
   number={5},
   pages={955--980},
   issn={0002-9327},
}
	
\bib{KM1}{article}{
   author={Kenig, Carlos E.},
   author={Merle, Frank},
   title={Global well-posedness, scattering and blow-up for the
   energy-critical, focusing, nonlinear Schr\"odinger equation in the radial
   case},
   journal={Invent. Math.},
   volume={166},
   date={2006},
   number={3},
   pages={645--675},
   issn={0020-9910},
}

\bib{KM2}{article}{
   author={Kenig, Carlos E.},
   author={Merle, Frank},
   title={Global well-posedness, scattering and blow-up for the
   energy-critical focusing nonlinear wave equation},
   journal={Acta Math.},
   volume={201},
   date={2008},
   number={2},
   pages={147--212},
   issn={0001-5962},
}

\bib{Ker}{article}{
   author={Keraani, Sahbi},
   title={On the defect of compactness for the Strichartz estimates of the
   Schr\"odinger equations},
   journal={J. Differential Equations},
   volume={175},
   date={2001},
   number={2},
   pages={353--392},
   issn={0022-0396},
}

\bib{LP}{book}{
   author={Linares, Felipe},
   author={Ponce, Gustavo},
   title={Introduction to nonlinear dispersive equations},
   series={Universitext},
   edition={2},
   publisher={Springer, New York},
   date={2015},
   pages={xiv+301},
   isbn={978-1-4939-2180-5},
   isbn={978-1-4939-2181-2},
}

\bib{LW}{article}{
   author={Lu, J.},
   author={Wu, Y.},
   title={Sharp threshold for scattering of a generalized davey-Stewartson system in three dimension},
   journal={Comm. Pure Appl. Anal.},
   date={2015},
   number={14},
   pages={ 1641--1670},
}
\bib{MV}{article}{
   author={Merle, F.},
   author={Vega, L.},
   title={Compactness at blow-up time for $L^2$ solutions of the critical
   nonlinear Schr\"odinger equation in 2D},
   journal={Internat. Math. Res. Notices},
   date={1998},
   number={8},
   pages={399--425},
   issn={1073-7928},
}

\bib{NaPeSa}{article}{
   author={Nath, R.},
   author={Pedri, P.},
   author={Zoller, P.},
   author={Lewenstein, M.},
   title={Soliton-soliton scattering in dipolar Bose-Einstein condensates},
   journal={Phys. Rev. A},
   date={2007},
   number={76},
   pages={ 013606--013613},
}

\bib{OT}{article}{
   author={Ogawa, Takayoshi},
   author={Tsutsumi, Yoshio},
   title={Blow-up of $H^1$ solution for the nonlinear Schr\"odinger equation},
   journal={J. Differential Equations},
   volume={92},
   date={1991},
   number={2},
   pages={317--330},
   issn={0022-0396},
}

\bib{SSZL}{article}{
   author={Santos, L.},
   author={Shlyapnikov, G.},
   author={Zoller, P.},
   author={Lewenstein, M.},
   title={Bose-Einstein condensation in trapped dipolar gases},
   journal={Phys. Rev. Lett.},
   date={2000},
   number={85},
   pages={ 1791--1797},
}

\bib{Vil}{article}{
   author={Vilela, M. C.},
   title={Inhomogeneous Strichartz estimates for the Schr\"odinger equation},
   journal={Trans. Amer. Math. Soc.},
   volume={359},
   date={2007},
   number={5},
   pages={2123--2136},
   issn={0002-9947},
}

\end{biblist}
\end{bibdiv}

\end{document}